\newtheorem{thm}{Theorem}[section]
\newtheorem{theorem}[thm]{Theorem}
\newtheorem{prop}[thm]{Proposition}
\newtheorem{lemma}[thm]{Lemma}
\newtheorem{definition}[thm]{Definition}
\newtheorem{cor}[thm]{Corollary}
\newtheorem{corollary}[thm]{Corollary}
\theoremstyle{definition} 
\newtheorem{example}[thm]{Example}
\newtheorem{remark}[thm]{Remark}
\newtheorem{notation}[thm]{Notation}
\newcommand{\raisemath}[1]{\mathpalette{\raisem@th{#1}}}
\newcommand{\raisem@th}[3]{\raisebox{#1}{$#2#3$}}
\numberwithin{equation}{section}
\setlist[enumerate]{format=\normalfont}
\tikzset{
  commutative diagrams/.cd, 
  arrow style=tikz, 
  diagrams={>=stealth}
}
\def\@tocline#1#2#3#4#5#6#7{\relax
  \ifnum #1>\c@tocdepth 
  \else
    \par \addpenalty\@secpenalty\addvspace{#2}%
    \begingroup \hyphenpenalty\@M
    \@ifempty{#4}{%
      \@tempdima\csname r@tocindent\number#1\endcsname\relax
    }{%
      \@tempdima#4\relax
    }%
    \parindent\z@ \leftskip#3\relax \advance\leftskip\@tempdima\relax
    \rightskip\@pnumwidth plus4em \parfillskip-\@pnumwidth
    #5\leavevmode\hskip-\@tempdima
      \ifcase #1
       \or\or \hskip 1em \or \hskip 2em \else \hskip 3em \fi%
      #6\nobreak\relax
    \dotfill\hbox to\@pnumwidth{\@tocpagenum{#7}}\par
    \nobreak
    \endgroup
  \fi}
\newcounter{marginnote}
\DeclareMathOperator{\ev}{ev}
\DeclareMathOperator{\Hom}{Hom}
\DeclareMathOperator{\Pic}{Pic}
\newcommand{\QQ}{\mathbb{Q}}
\newcommand{\PP}{\mathbb{P}}
\newcommand{\ZZ}{\mathbb{Z}}
\newcommand{\formaldisc}{\scrD \mathrm{isc}}
\newcommand{\Achow}{\mathrm{A}}
\newcommand{\Hyp}{\mathsf{H}}
\newcommand{\Fund}{\mathsf{Fund}}
\newcommand{\ol}[1]{\overline{#1}}
\newcommand{\bcd}{\begin{center}\begin{tikzcd}}
\newcommand{\ecd}{\end{tikzcd}\end{center}}
\newcommand{\C}{\mathbb{C}}
\newcommand{\Z}{\mathbb{Z}}
\newcommand{\R}{\mathbb{R}}
\newcommand{\Spec}{\operatorname{Spec}}
\NewDocumentCommand{\compatibilitydatum}{m m m m m m O{} O{} O{}}{
\begin{equation*} \begin{tikzcd}[ampersand replacement=\&]
  \: \arrow{r} \& {#1} \arrow{r} \arrow{d}{#7} \& {#2} \arrow{r} \arrow{d}{#8} \& {#3} \arrow{r}{[1]} \arrow{d}{#9} \& \: \\
  \: \arrow{r} \& {#4} \arrow{r} \& {#5} \arrow{r} \& {#6} \arrow{r} \& \:
\end{tikzcd} \end{equation*}}
\NewDocumentCommand{\commutingsquare}{m m m m o O{} O{} O{} O{}}{
\begin{equation}\begin{tikzcd}[ampersand replacement=\&] \label{#5}
  #1 \arrow{r}{#6} \arrow{d}{#7} \& #2 \arrow{d}{#8} \\
  #3 \arrow{r}{#9} \& #4
\end{tikzcd}\IfValueTF{#5}{\label{#5}}{} \end{equation}}
\NewDocumentCommand{\cartesiansquare}{m m m m O{} O{} O{} O{}}{
\begin{equation*}\begin{tikzcd}[ampersand replacement=\&]
  #1 \arrow{r}{#5} \arrow{d}{#6} \arrow[dr, phantom, "\square"] \& #2 \arrow{d}{#7} \\
  #3 \arrow{r}{#8} \& #4
\end{tikzcd} \end{equation*}}
\NewDocumentCommand{\cartesiansquarelabel}{m m m m m O{} O{} O{} O{}}{
\begin{tikzcd}[ampersand replacement=\&]
  #1 \arrow{r}{#6} \arrow{d}{#7} \arrow[dr, phantom, "\square"] \& #2 \arrow{d}{#8} \\
  #3 \arrow{r}{#9} \& #4
\end{tikzcd}\IfValueTF{#5}{\label{#5}}{}
}
\NewDocumentCommand{\triangleofspaces}{m m m O{} O{} O{}}{
\begin{tikzcd} [ampersand replacement=\&]
#1 \arrow{r}{#4} \arrow[bend right]{rr}{#5} \& #2 \arrow{r}{#6} \& #3
\end{tikzcd}}
\tikzset{
        DB/.style={circle,draw=black,circle,fill=white,inner sep=0pt, minimum size=5pt},
        DW/.style={circle,draw=black,fill=black,inner sep=0pt, minimum size=5pt},
        cvertex/.style={circle,draw=black,fill=white,inner sep=1pt,outer sep=3pt},
        vertex/.style={circle,fill=black,inner sep=1pt,outer sep=3pt},
        star/.style={circle,fill=yellow,inner sep=0.75pt,outer sep=0.75pt},
        tvertex/.style={inner sep=1pt,font=\scriptsize},
	pvertex/.style={circle,inner sep=1pt,outer sep=2pt,font=\scriptsize},
        gap/.style={inner sep=0.5pt,fill=white}}
\tikzset{
Bquiv/.style={circle,draw=black!80!white,circle,fill=black!80!white,inner sep=0pt, outer sep=4pt, minimum size=3pt},
W/.style={circle,draw=black,circle,fill=white,inner sep=0pt, minimum size=4pt},
B/.style={circle,draw=black!80!white,circle,fill=black!80!white,inner sep=0pt, minimum size=4pt},
BL/.style={circle,draw=blue!60!white,circle,fill=blue!60!white,inner sep=0pt, minimum size=4pt},
R/.style={circle,draw=red!60!white,circle,fill=red!60!white,inner sep=0pt, minimum size=4pt},  
G/.style={circle,draw=green!65!black,circle,fill=green!65!black,inner sep=0pt, minimum size=4pt},     
Rs/.style={circle,draw=red!60!white,circle,fill=red!60!white,inner sep=0pt, minimum size=2pt}, 
Bs/.style={circle,draw=black!80!white,circle,fill=black!80!white,inner sep=0pt, minimum size=2pt},
BLs/.style={circle,draw=blue!60!white,circle,fill=blue!60!white,inner sep=0pt, minimum size=2pt},
Gs/.style={circle,draw=green!65!black,circle,fill=green!65!black,inner sep=0pt, minimum size=2pt},  }
\tikzstyle{mybox} = [draw=black, fill=blue!10, very thick,
\tikzstyle{boxtitle} =[fill=blue!50, text=white,rectangle,rounded corners]
\newcommand{\marginparstretch}{0.6}
\let\oldmarginpar\marginpar
\renewcommand\marginpar[1]{\-\oldmarginpar[\framebox{\setstretch{\marginparstretch}\begin{minipage}{\marginparwidth}{\raggedleft\tiny #1}\end{minipage}}]{\framebox{\setstretch{\marginparstretch}\begin{minipage}{\marginparwidth}{\raggedright\tiny #1}\end{minipage}}}}
\definecolor{Pink}{RGB}{230 56 243}
\definecolor{Blue}{RGB}{0 19 147}
\definecolor{Green}{RGB}{66 147 41}
\definecolor{Grey}{RGB}{102 102 102}
\definecolor{Orange}{RGB}{237 107 45}
\definecolor{Red}{RGB}{234 53 159}
\def\mythick{0.5}
\newcommand\citetype[1]{}
\def\Cl{\mathop{\rm Cl}\nolimits}
\def\Pic{\mathop{\rm Pic}\nolimits}
\def\coh{\mathop{\rm coh}\nolimits}
\def\Hom{\mathop{\rm Hom}\nolimits}
\def\RHom{\mathop{\rm {\bf R}Hom}\nolimits}
\def\End{\mathop{\rm End}\nolimits}
\def\Ext{\mathop{\rm Ext}\nolimits}
\def\Spec{\mathop{\rm Spec}\nolimits}
\def\Db{\mathop{\rm{D}^b}\nolimits}
\newcommand{\aff}{\mathsf{aff}}
\newcommand{\con}{\mathrm{con}}
\newcommand{\CA}{\mathrm{A}_{\con}}
\def\RHom{{\rm{\bf R}Hom}}
\newcommand\Curve{\mathrm{C}}
\newcommand{\Xfrak}{\mathfrak{X}}
\newcommand{\cStab}[1]{\mathrm{Stab}_{#1}^{\kern -0.5pt \circ}\kern -0.25pt}
\newcommand{\cAut}{\mathrm{Aut}^{\kern 0pt \circ}\kern -0.25pt}
\newcommand{\qsf}{\mathsf{q}}
\newcommand{\rsf}{\mathsf{r}}
\newcommand{\Msf}{\mathsf{M}}
\newcommand{\Nsf}{\mathsf{N}}
\newcommand{\scrD}{\EuScript{D}}
\newcommand{\scrE}{\EuScript{E}}
\newcommand{\scrF}{\EuScript{F}}
\newcommand{\scrG}{\EuScript{G}}
\newcommand{\scrH}{\EuScript{H}}
\newcommand{\scrI}{\EuScript{I}}
\newcommand{\scrK}{\EuScript{K}}
\newcommand{\scrL}{\EuScript{L}}
\newcommand{\scrO}{\EuScript{O}}
\newcommand{\scrP}{\EuScript{P}}
\newcommand{\scrR}{\EuScript{R}}
\newcommand{\scrS}{\EuScript{S}}
\newcommand{\scrT}{\EuScript{T}}
\newcommand{\scrV}{\EuScript{V}}
\newcommand{\scrW}{\EuScript{W}}
\newcommand{\scrX}{\EuScript{X}}
\newcommand{\scrY}{\EuScript{Y}}
\newcommand{\scrZ}{\EuScript{Z}}
\newcommand{\scrIc}{\scrI^{\kern 0.5pt\mathrm{c}}}
\newcommand{\EeightSixNoLattice}[1][]{%
\begin{tikzpicture}[xscale=0.25,yscale=0.5]
\coordinate (A) at (0,0);
\coordinate (B) at (0,2);
\coordinate (C) at (0,4);
\coordinate (Ap) at (8,0);
\coordinate (Bp) at (8,2);
\coordinate (Cp) at (8,4);
\coordinate (a) at (2.66,4);
\coordinate (b) at (4,4);
\coordinate (c) at (5.33,4);
\coordinate (ap) at (2.66,0);
\coordinate (bp) at (4,0);
\coordinate (cp) at (5.33,0);
\draw (A)--(C);
\draw (Ap)--(Cp);
\draw (A)--(Ap);
\draw (B)--(Bp);
\draw (C)--(Cp);
\draw (b)--(bp);
\draw (a)--(ap);
\draw (c)--(cp);
\draw (A)--(b);
\draw (A)--(Cp);
\draw (C)--(bp);
\draw (C)--(Ap);
\draw (Cp)--(bp);
\draw (Ap)--(b);
\end{tikzpicture}
}
\newcommand{\EeightSixNoLatticeGREY}[1][]{%
\begin{tikzpicture}[xscale=0.25,yscale=0.5]
\coordinate (A) at (0,0);
\coordinate (B) at (0,2);
\coordinate (C) at (0,4);
\coordinate (Ap) at (8,0);
\coordinate (Bp) at (8,2);
\coordinate (Cp) at (8,4);
\coordinate (a) at (2.66,4);
\coordinate (b) at (4,4);
\coordinate (c) at (5.33,4);
\coordinate (ap) at (2.66,0);
\coordinate (bp) at (4,0);
\coordinate (cp) at (5.33,0);
\draw[gray] (A)--(C);
\draw[gray] (Ap)--(Cp);
\draw[gray] (A)--(Ap);
\draw[gray] (B)--(Bp);
\draw[gray] (C)--(Cp);
\draw[gray] (b)--(bp);
\draw[gray] (a)--(ap);
\draw[gray] (c)--(cp);
\draw[gray] (A)--(b);
\draw[gray] (A)--(Cp);
\draw[gray] (C)--(bp);
\draw[gray] (C)--(Ap);
\draw[gray] (Cp)--(bp);
\draw[gray] (Ap)--(b);
\end{tikzpicture}
}
\newcommand{\Eseven}[7]{%
\begin{tikzpicture}[scale=0.21]
\node at (0,0) [#1] {};
\node at (1,0) [#2] {};
\node at (2,0) [#3] {};
\node at (2,1) [#4] {};
\node at (3,0) [#5] {};
\node at (4,0) [#6] {};
\node at (5,0) [#7] {};
\end{tikzpicture}
}
\newcommand{\Eeight}[8]{%
\begin{tikzpicture}[scale=0.21]
\node at (0,0) [#1] {};
\node at (1,0) [#2] {};
\node at (2,0) [#3] {};
\node at (2,1) [#4] {};
\node at (3,0) [#5] {};
\node at (4,0) [#6] {};
\node at (5,0) [#7] {};
\node at (6,0) [#8] {};
\end{tikzpicture}
}
\tikzset{
W/.style={circle,draw=black,circle,fill=white,inner sep=0pt, minimum size=4pt},
B/.style={circle,draw=black!80!white,circle,fill=black!80!white,inner sep=0pt, minimum size=4pt},
Or/.style={circle,draw=Orange,circle,fill=Orange,inner sep=0pt, minimum size=4pt},
P/.style={circle,draw=Pink,circle,fill=Pink,inner sep=0pt, minimum size=4pt},
R/.style={circle,draw=black!80!white,circle,fill=red!80!white,inner sep=0pt, minimum size=4pt},  
}
\begin{document}
\title{\textsc{GV and GW invariants via the enhanced movable cone}}
\author{Navid Nabijou}
\address{Navid Nabijou, School of Mathematical Sciences, Queen Mary University of London, Mile End Road, London E1 4NS, UK.}
\email{n.nabijou@qmul.ac.uk}
\author{Michael Wemyss}
\address{Michael Wemyss, The Mathematics and Statistics Building, University of Glasgow, University Place, Glasgow, G12 8QQ, UK.}
\email{michael.wemyss@glasgow.ac.uk}
\begin{abstract} 
Given any smooth germ of a threefold flopping contraction, we first give a combinatorial characterisation of which Gopakumar--Vafa (GV) invariants are non-zero, by prescribing multiplicities to the walls in the movable cone. On the Gromov--Witten (GW) side, this allows us to describe, and even draw, the critical locus of the associated quantum potential. We prove that the critical locus is the infinite hyperplane arrangement of \cite{IyamaWemyssTits}, and moreover that the quantum potential can be reconstructed from a finite fundamental domain. We then iterate, obtaining a combinatorial description of the matrix which controls the transformation of the non-zero GV invariants under a flop. There are three main ingredients and applications: (1) a construction of flops from simultaneous resolution via cosets, which describes how the dual graph changes, (2) a closed formula which describes the change in dimension of the contraction algebra under flop, and (3) a direct and explicit isomorphism between quantum cohomologies of different crepant resolutions, giving a Coxeter-style, visual proof of the Crepant Transformation Conjecture for isolated cDV singularities.
\end{abstract}
\thanks{The authors were supported by EPSRC grant~EP/R009325/1. The first author was additionally supported by the Herchel Smith Fund, and the second by ERC Consolidator Grant 101001227 (MMiMMa). For the purpose of open access, the authors have applied a Creative Commons Attribution (CC BY) licence to any Author Accepted Manuscript version arising.}

\maketitle
\tableofcontents
\parindent 20pt
\parskip 0pt

\thispagestyle{empty}

\section{Introduction}

\noindent Many key results in algebraic geometry can be established using topological and combinatorial descriptions of a given variety, or of its degenerations and deformations. However, even with a clear combinatorial model on such a degeneration or deformation, determining which properties of the original variety can be controlled by combinatorics is still in general a difficult question.

This paper considers arbitrary smooth $3$-fold flopping contractions, which form a fundamental building block of the minimal model programme. Our main point is that, as far as their enumerative geometry is concerned, all such flopping contractions \emph{are} combinatorial, provided we are content with describing only the shape of the enumerative invariants, rather than their precise values. This qualitative perspective allows us to extract, and prove rather easily, many fundamental results. We determine which curve classes give rise to non-zero invariants, then control how these invariants transform under flop, in a visually pleasing and satisfyingly combinatorial manner. Along the way it is necessary to enhance existing geometric structures, such as the movable cone.

\subsection{Gopakumar--Vafa: finite arrangements}
Let $f \colon \scrX \to \Spec \scrR$ be a crepant resolution of a $3$-fold isolated cDV singularity, equivalently a germ of a smooth $3$-fold flopping contraction. The morphism $f$ contracts a finite collection $\{ \Curve_i \subseteq \scrX \mid i \in \scrIc\}$ of complete curves to a point, and these freely generate the group of algebraic curve classes
\begin{equation*} 
\Achow_1(\scrX) = \langle \Curve_i \mid i \in \scrIc \rangle_{\ZZ}.
\end{equation*}
Given $\upbeta \in \Achow_1(\scrX)$, Katz defines an associated Gopakumar--Vafa (GV) invariant \cite{KatzGV} 
\[
n_{\upbeta}=n_{\upbeta}(\scrX) \in \ZZ_{\geq 0}
\]
which we recall in Subsection~\ref{GV section} below. It is an important invariant of flopping contractions, with close connections to other known invariants \cite{DW1, TodaGV}.

\medskip
Our first result determines those $\upbeta$ for which $n_{\upbeta} \neq 0$. The description is direct and combinatorial, encoded in the associated finite hyperplane arrangement $\scrH_\scrI$ of Iyama and the second author \cite{IyamaWemyssTits}. The chambers in $\scrH_\scrI$  are precisely the ample cones of the different birational models, and so we can identify $\scrH_\scrI$ with the movable cone.  It turns out that the GV invariants are to first approximation encoded by the walls of this cone. There is however a slight catch: \emph{combinatorially} the walls carry multiplicities, and this data is not part of the definition of the movable cone.  This multiplicity, which is new information (see Remark~\ref{rem: movable via alg geom}),  turns out to be the key to determining whether $n_\upbeta\neq 0$.

\medskip
As is standard, and recalled in Subsection~\ref{sec: Dynkin recap}, slicing $\scrX \to \Spec \scrR$ by a generic hyperplane section gives rise to a partial crepant resolution of an ADE surface singularity.  From this slicing we thus obtain the Dynkin diagram $\Delta$ of the ADE surface singularity, together with a subset $\scrI$ of nodes: the full minimal resolution dominates the partial resolution, and $\scrI$ are the curves which are contracted by this morphism.

\begin{example}\label{example: intro}
As the running example, consider a two-curve smooth $3$-fold flop for which the corresponding Dynkin data is $\Eeight{B}{P}{B}{B}{B}{B}{B}{P}$, where by convention $\scrI$ equals the six black nodes.  The Dynkin data gives rise to a finite \emph{intersection arrangement} $\scrH_{\scrI}\subseteq\mathbb{R}^{|\Delta|-|\scrI|}=\mathbb{R}^2$ \cite[Section~3]{IyamaWemyssTits}. One method of calculating $\scrH_{\scrI}$ is to first restrict all 120 positive roots of $E_8$ to the subset $\scrIc = \Delta \setminus\scrI$, and thus obtain the set 
\[
\{ 01, 11, 21, 42, 31, 41, 10, 20, 30\}.
\] 
These so-called \emph{restricted roots} give rise to hyperplanes in the dual space, where for example $42$ gives rise to the hyperplane $4x+2y=0$.  The output is thus the following hyperplane arrangement, which we emphasise is constructed entirely  from $\scrI\subseteq\Delta$.
\begin{equation}
\begin{array}{cccc}
\begin{array}{c}
\begin{tikzpicture}[scale=0.5]
\draw[->,densely dotted] (180:2cm)--(0:2cm);
\node at (0:2.5) {$\scriptstyle x$};
\draw[->,densely dotted] (-90:2cm)--(90:2cm);
\node at (90:2.5) {$\scriptstyle y$};
\end{tikzpicture}
\end{array}
&
\begin{array}{c}
\begin{tikzpicture}[scale=1]
\draw[line width=\mythick mm,Pink] (180:2cm)--(0:2cm);
\node at (180:2.2) {$\scriptstyle 1$};
\draw[line width=\mythick mm,Orange] (135:2cm)--(-45:2cm);
\node at (135:2.2) {$\scriptstyle 1$};
\draw[line width=\mythick mm, Blue] (116.57:2cm)--(-63.43:2cm);
\node at (116.57:2.2) {$\scriptstyle 2$};
\draw[line width=\mythick mm, Green] (108.43:2cm)--(-71.57:2cm);
\node at (108.43:2.2) {$\scriptstyle 1$};
\draw[line width=\mythick mm, Grey] (104.04:2cm)--(-75.96:2cm);
\node at (104.04:2.2) {$\scriptstyle 1$};
\draw[line width=\mythick mm,Pink] (90:2cm)--(-90:2cm);
\node at (90:2.2) {$\scriptstyle 3$};
\end{tikzpicture}
\end{array}&
\begin{array}{c}
\begin{tabular}{ccc}
\toprule
Restricted Root&\\
\midrule
$01$&$\tikz\draw[line width=\mythick mm, Pink] (0,0) -- (0.25,0);$\\
$11$&$\tikz\draw[line width=\mythick mm, Orange] (0,0) -- (0.25,0);$\\
$21, 42$&$\tikz\draw[line width=\mythick mm, Blue] (0,0) -- (0.25,0);$\\
$31$&$\tikz\draw[line width=\mythick mm, Green] (0,0) -- (0.25,0);$\\
$41$&$\tikz\draw[line width=\mythick mm, Grey] (0,0) -- (0.25,0);$\\
$10, 20, 30$&$\tikz\draw[line width=\mythick mm, Pink] (0,-0.15) -- (0,0.15);$\\
\bottomrule
\end{tabular}
\end{array}
\end{array}\label{running example finite}
\end{equation}
Note that the restricted root $42$ gives rise to the hyperplane $2(2x+y)=0$, and so the blue diagonal $2x+y=0$ line carries the list $[1,2]$ of multiplicities.  We write $2$ beside the blue line to emphasise this fact.  Similarly, the line $x=0$ carries the list $[1,2,3]$ of multiplicities, as a consequence of $20$ and $30$.

\end{example}

Returning to general flopping contractions $\scrX\to\Spec\scrR$, since by construction the nodes in $\Delta\backslash\scrI$ can be identified with the curves in $\scrX$, after some natural identifications $\scrH_{\scrI}$ (with multiplicities) can be viewed inside $\Pic\scrX \otimes \R$.  So can the movable cone.  After forgetting the multiplicities, $\scrH_{\scrI}$ is equal to the movable cone  \cite{Pinkham, HomMMP}.

The following is our first main result. It describes the non-zero GV invariants in an elementary combinatorial way, and asserts that it is the hyperplanes of $\scrH_{\scrI}$, counted with multiplicity, that control the non-zero GV invariants.

\begin{thm}[\ref{cor: GV nonzero}]\label{nonzero GV intro}
For $\upbeta \in \Achow_1(\scrX)$ the GV invariant $n_{\upbeta}$ is non-zero if and only if $\upbeta$ is a restricted root.
\end{thm}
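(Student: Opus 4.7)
The plan is to reduce to the surface case via the generic hyperplane slice, then exploit the classical identification of curves in an ADE surface resolution with positive roots.

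First, I would use the slicing set-up recalled in the excerpt: a generic hyperplane section $Y\subset\scrX$ gives a partial crepant resolution $Y\to\Spec R$ of an ADE surface singularity with Dynkin diagram $\Delta$, where exactly the nodes in $\scrI$ are contracted. Under the natural identification $\Achow_1(Y)\cong\Achow_1(\scrX)$ (both are freely generated by the indices $\scrIc$), any effective curve class supported on $\scrX$ has a representative supported on $Y$, because deformations of the 3-fold curves are captured in the slice. I would invoke (or recall) that GV invariants only see curve classes that are realised by actual complete curves; this is built into Katz's definition via stable sheaves with proper support, and is further confirmed by the comparison results of \cite{DW1, TodaGV} cited in the paper.

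The second step is the surface classification. In the minimal resolution $\widetilde Y\to\Spec R$ every irreducible rational curve is a $(-2)$-curve, and its class in the root lattice of $\Delta$ is a positive root; conversely every positive root is realised by an irreducible rational curve. Pushing forward under the contraction $\widetilde Y\to Y$ sends a positive root $\upalpha$ to its restriction $\upalpha|_{\scrIc}$, which is either zero (when $\upalpha$ is supported on $\scrI$) or a restricted root. Hence the set of classes $\upbeta\in\Achow_1(\scrX)$ represented by an irreducible curve on $Y$ coincides exactly with the set of restricted roots. Combined with Step~1, this gives the ``only if'' direction: if $n_\upbeta\neq 0$ then $\upbeta$ is a restricted root.

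For the ``if'' direction, I would argue that each restricted root is actually achieved by a nonzero GV count. The cleanest route is through the contraction algebra $\Lambda_{\con}$ of Donovan--Wemyss, whose simple modules are indexed by $\scrIc$ and whose dimension vectors of certain distinguished summands match restricted roots. A nonzero value of $n_\upbeta$ can then be read off directly from the nontriviality of the corresponding piece of $\Lambda_{\con}$. Alternatively, a more hands-on argument uses that the curve in $Y$ deforms (in the universal family over the versal deformation) to give a positive-dimensional contribution to the sheaf moduli used in defining $n_\upbeta$, hence a nonzero integer.

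The main obstacle is this converse: merely exhibiting an irreducible curve in the class $\upbeta$ does not immediately force $n_\upbeta\neq 0$, since in principle the moduli space entering Katz's definition could contribute zero through cancellations or excess dimension. Handling this requires either a deformation-invariance argument that identifies the GV contribution with the positive-root multiplicity encoded in $\scrH_{\scrI}$, or the algebraic input via $\Lambda_{\con}$ that guarantees the relevant dimension is strictly positive. Once this step is in place, the equivalence follows.
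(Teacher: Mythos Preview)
Your proposal has genuine gaps in both directions, and misses the key mechanism the paper uses.

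First, a factual error in Step~2: in the minimal resolution of an ADE singularity, the irreducible exceptional curves are the $(-2)$-curves indexed by the \emph{simple} roots, not by all positive roots. A general positive root corresponds to a connected \emph{reducible} configuration of $(-2)$-curves. So your identification of ``classes of irreducible curves in $Y$'' with ``restricted roots'' fails already at the source: on the central surface only the classes $\Curve_i$ for $i\in\scrIc$ arise from irreducible curves, yet many more restricted roots must have $n_\upbeta\neq 0$ (e.g.\ $n_{2\Curve_i}\neq 0$ for higher-length flops). Your ``only if'' argument therefore does not go through as written, and even if patched it would not yield the converse.

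The paper instead works with the \emph{perturbed} $3$-fold $\scrX_t$, constructed by perturbing the classifying map $\upmu\colon\formaldisc\to\mathfrak{h}_\C/W_\scrI$ so that it meets the discriminant locus $\scrD_\scrI$ transversely. On $\scrX_t$ every complete curve is an isolated $(-1,-1)$-curve, and $n_\upbeta$ is \emph{defined} as the number of such curves of class $\upbeta$. Katz--Morrison's description of $\scrD_\scrI$ --- its irreducible components are exactly the $\scrD_{\upalpha,\C}/W_\scrI$ for positive roots $\upalpha$ with $\uppi_\scrI(\upalpha)\neq 0$, and the curve appearing over a generic point of that component has class $\uppi_\scrI(\upalpha)$ --- immediately gives ``only if'': if $\upbeta$ is not a restricted root, no component of $\scrD_\scrI$ carries curves of that class, so $n_\upbeta=0$.

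For the ``if'' direction, which you correctly identify as the obstacle, the paper's argument is one line once this setup is in place: the unperturbed map satisfies $\upmu(0)=0$, and the origin $0$ lies in $\scrD_{\upalpha,\C}/W_\scrI$ for \emph{every} positive root $\upalpha$. Hence $|\upmu_0^{-1}(\scrD_{\upalpha,\C}/W_\scrI)|\geq 1$, and this intersection count is stable under the small perturbation $\upmu_0\leadsto\upmu_t$. So $n_\upbeta\geq 1$ for every restricted root $\upbeta$. Neither the contraction algebra nor any delicate moduli-theoretic positivity is needed; the whole point is that the perturbation definition of $n_\upbeta$ makes the argument transparent.
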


The theme of this paper is that the shape of the enumerative geometry of $\scrX$ is controlled, in a very visual way, from this finite amount of initial data.

\subsection{Gromov--Witten: infinite arrangements}\label{sec: GW intro}
Given any subset $\scrI$ of a Dynkin diagram, the finite arrangement $\scrH_{\scrI}$ of the previous subsection has an infinite cousin $\scrH_{\scrI}^{\aff}$.  Given a restricted root $\upbeta=(\upbeta_i)_{i\in\scrIc}$, the hyperplane $\sum_{i \in \scrIc} \upbeta_i x_i = 0 $ appearing in the finite arrangement gets translated over the integers $\ZZ$, to give an infinite family
\begin{equation}
 \sum_{i \in \scrIc} \upbeta_i x_i \in \ZZ. \label{int translates}
\end{equation}
Repeating this over every restricted root results in an infinite arrangement of affine hyperplanes, written $\scrH_{\scrI}^{\aff}$. Note that multiplicities on hyperplanes of $\scrH_{\scrI}$ result in more translations, as if say $2\upbeta=(2\upbeta_i)$ is also a restricted root, then its translates give rise to the family $ \sum 2\upbeta_i x_i \in \ZZ$, i.e.\ to $ \sum \upbeta_i x_i \in \frac{1}{2}\ZZ$.  This is larger than \eqref{int translates}.

In the running Example~\ref{example: intro}, taking all the relevant translations of \eqref{running example finite} results in the following $\scrH^{\aff}_{\scrI}$.
\begin{equation}
\begin{array}{cc}
\begin{array}{c}
\begin{tikzpicture}[scale=0.5]
\draw[->,densely dotted] (0,-0.5)--($(0,-0.5)+(52.5:4.5cm)$);
\node at ($(0,-0.5)+(57.5:3.9cm)$) {$\scriptstyle x$};
\draw[->,densely dotted] (0,-0.5)--(0,1.3);
\node at (90:1.75) {$\scriptstyle y$};
\end{tikzpicture}
\end{array}
&
\begin{array}{c}
\includegraphics[angle=0,width=7.5cm,height=4cm]{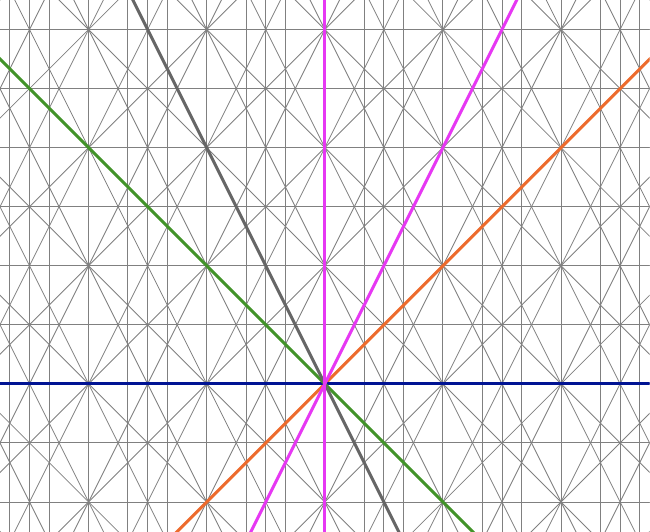}
\end{array}
\end{array}\label{running example infinite}
\end{equation}

Returning to a general flopping contraction $\scrX\to\Spec\scrR$, the next main result relates the Gromov--Witten (GW) theory of $\scrX$ to the associated infinite arrangement $\scrH^{\aff}_{\scrI}$. The GW invariants are virtual degrees of moduli spaces of stable maps, and provide a system of curve counts equivalent to the more enumerative GV invariants via multiple cover formulae. The GW invariants form the structure constants for the quantum cohomology algebra, but for our purposes it is more convenient to package this information into a generating function, called the \emph{quantum potential} (see Subsection~\ref{sec: GW invariants} for details). Combining Theorem~\ref{nonzero GV intro} and the multiple cover formulae gives the second main result.

\begin{thm}[\ref{cor: quantum is hyper}]\label{GW=hyper intro} 
Let $\scrX\to\Spec\scrR$ be a smooth $3$-fold flopping contraction. The pole locus of the quantum potential is the complexification of the infinite arrangement $\scrH^{\aff}_{\scrI}$. 
\end{thm}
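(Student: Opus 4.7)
The strategy is to combine Theorem~\ref{nonzero GV intro} with the standard multiple cover formula for threefold Gromov--Witten invariants, reducing the pole-locus computation to the elementary fact that $q^\upbeta/(1-q^\upbeta)$ has its divisor of poles along $\{q^\upbeta=1\}$.

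First I would rewrite the quantum potential in GV form. Denoting coordinates on the complexification of $\Pic\scrX\otimes\R$ by $(x_i)_{i\in\scrIc}$, and setting $q^\upbeta=\exp(2\pi\mathrm{i}\sum_{i\in\scrIc}\upbeta_ix_i)$, the Aspinwall--Morrison / Voisin multiple cover formula rewrites the instanton part of the quantum potential as
$$\Phi \;=\; \sum_{\upbeta\in\Achow_1(\scrX),\,\upbeta>0}\, n_\upbeta\cdot\mathrm{Li}_3\bigl(q^\upbeta\bigr).$$
Taking third Mori-directional derivatives, which produce the quantum product, sends $\mathrm{Li}_3$ to $\mathrm{Li}_0(q^\upbeta)=q^\upbeta/(1-q^\upbeta)$ up to a polynomial prefactor that does not affect the divisor of poles. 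Each non-zero summand therefore contributes poles exactly along $\{q^\upbeta=1\}$, equivalently $\{\sum_{i\in\scrIc}\upbeta_ix_i\in\Z\}$.

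Next I would apply Theorem~\ref{nonzero GV intro} to discard every $\upbeta$ that is not a restricted root. What remains is
$$\text{pole locus}\;=\;\bigcup_{\upbeta\text{ a restricted root}}\bigl\{\,\textstyle\sum_{i\in\scrIc}\upbeta_ix_i\in\Z\,\bigr\},$$
which, by the very definition of $\scrH^{\aff}_\scrI$ given immediately after~\eqref{int translates}, coincides with the complexification of $\scrH^{\aff}_\scrI$. The multiplicity rule is automatic: whenever $\upbeta$ and $k\upbeta$ are both restricted roots, $\{q^{k\upbeta}=1\}$ supplies precisely the additional translates $\sum_{i\in\scrIc}\upbeta_ix_i\in\tfrac{1}{k}\Z$, matching the multiplicity convention of Subsection~\ref{sec: GW intro}.

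The main obstacle is verifying that poles from different summands do not cancel. This reduces to two observations: (i)~by~\cite{KatzGV}, $n_\upbeta\in\Z_{\geq 0}$ is strictly positive on every restricted root; and (ii)~along any single irreducible component of the proposed locus only finitely many summands develop a pole, and these contribute with strictly positive coefficients. Hence no cancellation is possible and the two loci coincide.
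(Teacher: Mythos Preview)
Your proposal is correct and follows essentially the same route as the paper: rewrite the quantum potential via the multiple cover formula so that each term is governed by $q^\upbeta/(1-q^\upbeta)$, then invoke Theorem~\ref{nonzero GV intro} to restrict the sum to restricted roots, and finally match the resulting pole locus with the definition of $\scrH^\aff_\scrI$. One minor terminological point: what the paper calls the quantum potential is already the three-point function, so your step of writing $\Phi=\sum n_\upbeta\,\mathrm{Li}_3(q^\upbeta)$ and then differentiating amounts to a slight repackaging of the same computation carried out directly in Theorem~\ref{thm: structure of quantum potential}. On the other hand, your explicit treatment of the non-cancellation issue is a genuine addition: the paper simply asserts that the pole locus of the finite sum \eqref{eqn: rational expression for quantum potential} is the union of the individual pole loci, whereas you note that along any fixed hyperplane only proportional classes contribute, and that the corresponding residues all carry the common factor $(\upgamma_1\cdot\upbeta)(\upgamma_2\cdot\upbeta)(\upgamma_3\cdot\upbeta)$ weighted by strictly positive $n_{k\upbeta}\,k^3$, so cannot cancel for generic inputs.
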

\noindent By \cite{HW2}, the complement of the complexified arrangement $\scrH^\aff_\scrI$ forms the base of the Bridgeland stability covering map, for a natural compactly-supported subcategory of the derived category of $\scrX$. Theorem~\ref{GW=hyper intro} therefore connects quantum-cohomological Frobenius manifolds and spaces of stability conditions, a phenomenon which has been observed in other contexts \cite{BridgelandNonCompact,IkedaQiu,McAuleyThesis}.

\subsection{Flops via simultaneous resolution}
To track the change of GV/GW invariants under iterated flops requires us to first rework some of the theory of simultaneous resolutions, which may be of independent interest.  
 Our new contribution is to use the wall crossing formula from \cite{IyamaWemyssTits}, which indexes chambers of the movable cone by certain cosets, to construct iterated flops from simultaneous (partial) resolutions, and explain how the dual graph changes under flop.  This completes work of Reid \cite{Reid}, Pinkham \cite{Pinkham}, and Katz--Morrison \cite{KatzMorrison} in the 80s and 90s, rounding off a circle of ideas going back to Brieskorn \cite{Brieskorn}.

For any Kleinian singularity $\mathbb{C}^2/G$, consider the corresponding Dynkin diagram $\Delta$, root space $\mathfrak{h}$, and Weyl group $W$.  As is standard, $\mathbb{C}^2/G$ admits a versal deformation $\Spec\scrV$ over the base $\mathfrak{h}_{\mathbb{C}}/W$.  For any subset $\scrI\subseteq\Delta$, consider the parabolic subgroup $W_\scrI\colonequals \langle s_i\mid i\in \scrI\rangle$, and take the pullback to obtain
\[
\begin{tikzpicture}
\node (U) at (2,-1.5) {$\Spec \scrV_{\scrI}$};
\node (V) at (4,-1.5) {$\Spec \scrV$};
\node (PRes) at (2,-3) {$\mathfrak{h}_{\mathbb{C}}/W_\scrI$};
\node (Res) at (4,-3) {$\mathfrak{h}_{\mathbb{C}}/W$.};
\draw[->] (U)--(V);
\draw[->] (V)--(Res);
\draw[->] (U)--node[right]{$\scriptstyle \mathsf{g}_{\scrI}$}(PRes);
\draw[->] (PRes)--(Res);
\end{tikzpicture}
\]
When $\scrI=\emptyset$, the parabolic $W_\scrI=\mathds{1}$, and in this case classically $\Spec\scrV_\emptyset$ admits a simultaneous resolution.  

As is now standard, to describe smooth $3$-fold flops requires singular surface geometry, and so the ability to consider $\scrI\neq\emptyset$ is crucial. By \cite{KatzMorrison}, for each $\scrI$ there is a preferred, or \emph{standard} simultaneous partial resolution $\mathsf{h}_{\scrI}\colon \scrY_\scrI\to\Spec\scrV_\scrI$ (see Subsection~\ref{sec: sim partial res I}). Further, by \emph{loc.\ cit.\ }all smooth flops can be constructed via appropriate classifying maps $\upmu\colon\formaldisc\to\mathfrak{h}_{\mathbb{C}}/W_{\scrI}$ from the formal disc to $\mathfrak{h}_{\mathbb{C}}/W_{\scrI}$ for some $\scrI$, giving the following cartesian diagram
\[
\begin{tikzpicture}
\node (X) at (0,0) {$\scrX$};
\node (Z) at (2,0) {$\scrY_{\scrI}$};
\node (R) at (0,-1.5) {$\Spec \scrR$};
\node (U) at (2,-1.5) {$\Spec \scrV_\scrI$};
\node (V) at (4,-1.5) {$\Spec \scrV$};
\node (t) at (0,-3) {$\formaldisc$};
\node (PRes) at (2,-3) {$\mathfrak{h}_{\mathbb{C}}/W_{\scrI}$};
\node (Res) at (4,-3) {$\mathfrak{h}_{\mathbb{C}}/W$.};
\draw[->] (X)--(Z);
\draw[->] (X)--(R);
\draw[->] (Z)--node[right]{$\scriptstyle \mathsf{h}_{\scrI}$}(U);
\draw[->] (R)--(U);
\draw[->] (U)--(V);
\draw[->] (V)--(Res);
\draw[->] (R)--(t);
\draw[->] (U)--node[right]{$\scriptstyle \mathsf{g}_{\scrI}$}(PRes);
\draw[->] (t)--node[above]{$\scriptstyle\upmu$}(PRes);
\draw[->] (PRes)--(Res);
\end{tikzpicture}
\]
The question is, given $\scrX\to\Spec \scrR$, how to construct the flop at a given curve from the classifying map $\upmu$. This was solved in the case $\scrI=\emptyset$ by Reid \cite{Reid}, but the general case is harder, since the subset $\scrI$ changes under flop. Pinkham \cite{Pinkham} counts only the number of simultaneous resolutions.

We solve this problem by appealing to the wall crossing combinatorics of \cite{IyamaWemyssTits}. The key point is that when $\scrI\neq\emptyset$, chambers in the movable cone are indexed by cosets, not by elements of the Weyl group. For any subset $\Gamma \subseteq \Delta$ let $W_\Gamma \subseteq W$ denote the parabolic subgroup generated by reflections dual to the elements of $\Gamma$, and let $\ell_\Gamma \in W_\Gamma$ denote the longest element. For any curve $\Curve_i\subseteq\scrX$, set $w_i=\ell_{\scrI}\ell_{\scrI\cup\{i\}} \in W$. Then there is a unique subset $\upomega_i(\scrI)\subseteq\Delta$, described explicitly in Section~\ref{sec: flops via sim res}, for which $W_{\scrI}w_i=w_iW_{\upomega_i(\scrI)}$.

\begin{thm}[\ref{thm: produce flop}]
Post-composing $\upmu$ with $w_i^{-1}\colon \mathfrak{h}_{\mathbb{C}}/W_{\scrI}\to\mathfrak{h}_{\mathbb{C}}/W_{\omega_i(\scrI)}$ and taking the pullback constructs the flop $\scrX_i^+\to\Spec\scrR$ of the curve $\Curve_i\subset\scrX$.  In particular
\begin{enumerate}
\item $\upomega_i(\scrI)$ is the dual graph of the exceptional locus of $\scrX_i^+\to\Spec\scrR$.
\item All other crepant resolutions can be obtained from the fixed $\upmu$ by post-composing with $x^{-1}\colon\mathfrak{h}_{\mathbb{C}}/W_{\scrI}\to\mathfrak{h}_{\mathbb{C}}/W_{\scrK}$ and pulling back along $\scrY_\scrK$, as the pair $(x,\scrK)$ ranges over the (finite) indexing set $\mathsf{Cham}(\Delta,\scrI)$ described in Notation~\textnormal{\ref{notation: subset stuff}}.
\end{enumerate}
\end{thm}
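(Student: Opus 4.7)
The plan is to verify well-definedness, then use the wall-crossing combinatorics of \cite{IyamaWemyssTits} to identify the pullback as the flop, then iterate for the general statement.

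First I would check that post-composition with $w_i^{-1}$ is well-defined on the relevant quotients. The defining identity $W_{\scrI} w_i = w_i W_{\upomega_i(\scrI)}$ rearranges to $w_i^{-1} W_{\scrI} w_i = W_{\upomega_i(\scrI)}$, so the linear automorphism $w_i^{-1} \colon \mathfrak{h}_{\mathbb{C}} \to \mathfrak{h}_{\mathbb{C}}$ descends to an isomorphism $\mathfrak{h}_{\mathbb{C}}/W_{\scrI} \to \mathfrak{h}_{\mathbb{C}}/W_{\upomega_i(\scrI)}$. Composing with the projection $\mathfrak{h}_{\mathbb{C}}/W_{\upomega_i(\scrI)} \to \mathfrak{h}_{\mathbb{C}}/W$ recovers the original projection from $\mathfrak{h}_{\mathbb{C}}/W_{\scrI}$, because $w_i \in W$; thus the pullback $\scrY_{\upomega_i(\scrI)} \times_{\mathfrak{h}_{\mathbb{C}}/W_{\upomega_i(\scrI)}} \formaldisc$ is naturally a crepant partial resolution of the same base $\Spec \scrR$.

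Next, to identify this pullback with the flop $\scrX_i^+$, I would invoke the description from \cite{IyamaWemyssTits} of chambers of the movable cone of $\scrX$ via $\mathsf{Cham}(\Delta,\scrI)$. The element $w_i = \ell_\scrI \ell_{\scrI \cup \{i\}}$ is characterised combinatorially as implementing the wall-crossing across the wall separating the chamber of $\scrX$ from the chamber of its flop at $\Curve_i$: indeed $\ell_{\scrI \cup \{i\}}$ is the longest element of the rank-one enlargement of $W_\scrI$ by the reflection $s_i$, and multiplying on the left by $\ell_\scrI$ isolates the \emph{minimal} such wall-crossing coset representative. Via Katz--Morrison \cite{KatzMorrison}, this combinatorial wall-crossing is realised geometrically as post-composition with $w_i^{-1}$, so the pullback recovers the crepant resolution across the wall, namely $\scrX_i^+$. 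Part (1) then follows from the construction of $\mathsf{h}_{\scrK} \colon \scrY_{\scrK} \to \Spec \scrV_{\scrK}$ in Subsection~\ref{sec: sim partial res I}: by definition, the dual graph of its central exceptional fibre is $\scrK$, and this dual graph is preserved under pullback along the formal disc since $\scrX_i^+ \to \Spec \scrR$ is a smooth flopping contraction. Applying this with $\scrK = \upomega_i(\scrI)$ yields the claim. For part (2), iterating the flop construction walks through every chamber of the movable cone, and each chamber is labelled by a pair $(x, \scrK) \in \mathsf{Cham}(\Delta, \scrI)$; the same post-composition and pullback argument applied with $x^{-1}$ and $\scrY_\scrK$ then exhausts all crepant resolutions.

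The main obstacle is rigorously matching the group-theoretic wall-crossing operation with the geometric operation of flopping. The pullback produces \emph{some} crepant partial resolution of $\Spec \scrR$ essentially for free, but to pin it down as precisely the one across the wall indexed by $i$ — the one in which the curve class $[\Curve_i]$ flips sign — one must track the chamber combinatorics through the Katz--Morrison classifying map picture. This identification, and checking that $w_i$ (rather than any other coset representative) is the correct combinatorial input, is the heart of the argument; once it is in place, the dual graph statement and the iteration statement follow cleanly from the coset combinatorics already established in \cite{IyamaWemyssTits}.
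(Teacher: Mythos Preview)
Your overall plan is correct and matches the paper's strategy, but the proposal contains a genuine gap at exactly the point you yourself flag as ``the main obstacle.'' You write: ``Via Katz--Morrison \cite{KatzMorrison}, this combinatorial wall-crossing is realised geometrically as post-composition with $w_i^{-1}$, so the pullback recovers the crepant resolution across the wall.'' This sentence is the theorem, not a step in its proof. Katz--Morrison does not supply this identification; establishing it is precisely the new content here.

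The paper fills this gap by an explicit class-group computation. One identifies $\Cl(\scrV_\scrI)\cong\Pic(\scrY_\scrI)\cong\bigoplus_{j\in\scrIc}\ZZ e_j^\star$ and observes that $\scrY_\scrI$ is obtained by blowing up an element of the characteristic cone $C_\scrI=\{(z_j)\mid z_j>0\}$. The isomorphism $\Spec\scrV_\scrI\cong\Spec\scrV_{\upomega_i(\scrI)}$ induced by $\ell_\scrI\ell_{\scrI+i}$ then acts on class groups via the Weyl action on coroots, carrying $C_{\upomega_i(\scrI)}$ to the region $\ell_\scrI\ell_{\scrI+i}\cdot C_{\upomega_i(\scrI)}$ inside $\bigoplus_{j\in\scrIc}\ZZ e_j^\star$. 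The key combinatorial input is \cite[1.20(1)(d)]{IyamaWemyssTits}, which shows this region is the chamber adjacent to $C_\scrI$ across the wall $z_i=0$. Restricting to the $3$-fold via Pinkham's identification $\Cl(\scrR)\cong\Cl(\scrV_\scrI)$, one sees that $\scrX_i^+\to\Spec\scrR$ is obtained by blowing up the neighbouring chamber, and it is implicit in \cite{Pinkham} (see also \cite{HomMMP}) that this is the flop at $\Curve_i$. Your proposal gestures toward this with ``track the chamber combinatorics through the Katz--Morrison classifying map picture,'' but the actual mechanism --- characteristic cones in class groups, their transformation under the Weyl element, and the appeal to Pinkham for the flop interpretation --- is absent. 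Once this is supplied, your treatment of parts (1) and (2) is fine and matches the paper.
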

We describe the above in detail in Section~\ref{sec: flops via sim res}, but emphasise here that everything is formed intrinsically from the Dynkin data, once $\scrX\to\Spec\scrR$ and thus $\upmu$ is fixed.

\subsection{Tracking fundamental regions}
With the above in hand, tracking the change in GV/GW invariants under all possible flops becomes easy, and satisfyingly visual.  In our running Example~\ref{example: intro}, each of the 12 crepant resolutions $\scrX_i\to\Spec\scrR$ admits a fundamental region in \eqref{running example infinite}. As calibration, the fixed $\scrX\to\Spec \scrR$ corresponds to the unit box in the purple axes
\begin{equation}
\begin{array}{cc}
\begin{array}{c}
\begin{tikzpicture}[scale=0.5]
\draw[->,densely dotted] (0,-0.5)--($(0,-0.5)+(52.5:4.5cm)$);
\node at ($(0,-0.5)+(57.5:3.9cm)$) {$\scriptstyle x$};
\draw[->,densely dotted] (0,-0.5)--(0,1.3);
\node at (90:1.75) {$\scriptstyle y$};
\end{tikzpicture}
\end{array}
&
\begin{array}{c}
\includegraphics[angle=0,width=7.5cm,height=4cm]{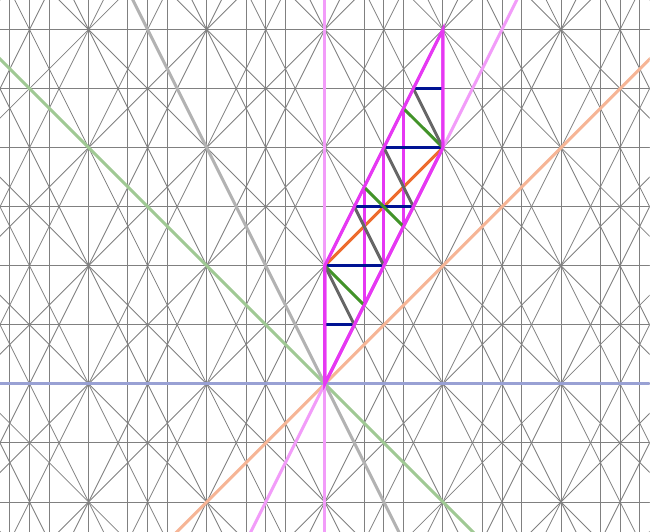}
\end{array}
\end{array}\label{running example infinite fund region 1}
\end{equation}
The other 12 chambers in the movable cone generate similar fundamental regions as in \eqref{running example infinite fund region 1} and this is illustrated in Figure~\ref{figure1}.
\begin{figure}[h] 
\[
\begin{array}{c}
\includegraphics[angle=0,width=7.5cm,height=6cm]{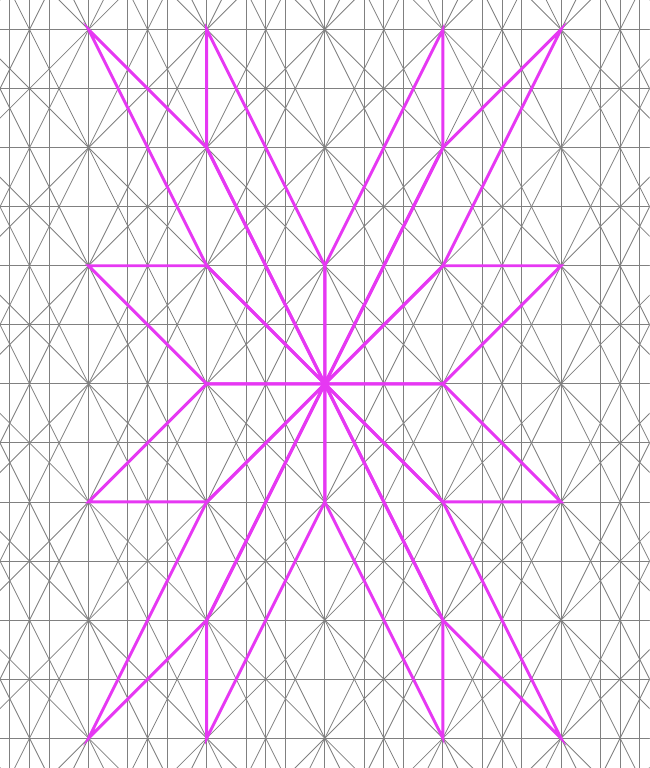}
\end{array}
\]
\caption{The 12 fundamental regions corresponding to the 12 crepant resolutions.}
\label{figure1}
\end{figure}

Reassuringly, flopping a single curve turns out to correspond to the neighbouring region. Although  Figure~\ref{figure1} only illustrates the two-curve flop in the running Example~\ref{example: intro}, similar things happen in full generality (see Section~\ref{sec: applications}).  The following is our third main result, which controls how GV invariants transform under iterated flops.  As notation, set $\scrX_i^+$ to be the scheme obtained from $\scrX$ after flopping only a single curve $\Curve_i$, and further write $n_{\upbeta,\scrX}$ for the GV invariant of curve class $\upbeta$ in $\scrX$. In what follows $\mathsf{M}_i$ is an explicit invertible matrix, defined in \eqref{defn Mi}, that can be easily built using Dynkin combinatorics.

\begin{theorem}[\ref{thm: GV under flop}]\label{thm: GV translate intro}
With the notation as above,
\[
n_{\upbeta,\scrX^+_i}=
\begin{cases}
n_{\upbeta,\scrX}&\mbox{if }\upbeta\in \mathbb{Z} \Curve_i\\
n_{\mathsf{M}_i \upbeta, \scrX}&\mbox{else}.
\end{cases}
\]
\end{theorem}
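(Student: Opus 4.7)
The plan is to combine Theorem~\ref{nonzero GV intro} with Theorem~\ref{thm: produce flop}, reducing the statement to (a) a combinatorial bijection between restricted roots across the flop, and (b) a geometric argument that pins down numerical values. The first step exploits that, by Theorem~\ref{nonzero GV intro}, both $n_{\upbeta,\scrX^+_i}$ and $n_{\mathsf{M}_i\upbeta,\scrX}$ vanish automatically unless the relevant class is a restricted root for the appropriate Dynkin data. Theorem~\ref{thm: produce flop} identifies the Dynkin data of $\scrX^+_i$ as $(\Delta,\upomega_i(\scrI))$, with $\upomega_i(\scrI)$ pinned down by the coset identity $W_{\scrI}w_i=w_iW_{\upomega_i(\scrI)}$. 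Since the matrix $\mathsf{M}_i$ of \eqref{defn Mi} is built from $w_i$, the coset identity implies that $\mathsf{M}_i$ induces a bijection between the restricted roots of $(\Delta,\upomega_i(\scrI))$ and those of $(\Delta,\scrI)$, sending $\mathbb{Z}\Curve_i^+$ bijectively onto $\mathbb{Z}\Curve_i$. This already matches the non-vanishing pattern on both sides of the formula.

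For $\upbeta\notin\mathbb{Z}\Curve_i$, I would invoke the Bridgeland--Chen derived equivalence $\mathrm{D}(\scrX)\simeq\mathrm{D}(\scrX^+_i)$ associated to the flop at $\Curve_i$. Its action on compactly-supported numerical $K$-theory is precisely $\mathsf{M}_i$, essentially by construction of the latter in \eqref{defn Mi}. Since GV invariants are controlled by the contraction algebra of \cite{DW1} (or, equivalently, by stable pair invariants via Toda), and these are transported by the equivalence while preserving the curve-class grading, comparing dimensions in each curve class yields $n_{\upbeta,\scrX^+_i}=n_{\mathsf{M}_i\upbeta,\scrX}$.

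For the case $\upbeta\in\mathbb{Z}\Curve_i$, I would instead restrict to a formal-analytic neighbourhood of $\Curve_i$: the resulting single-curve sub-flop has GV invariants determined by intrinsic width/length data of $\Curve_i$, which is manifestly unchanged by the flop (it is the same formal sub-threefold viewed from the other crepant resolution). Hence $n_{k\Curve_i,\scrX}=n_{k\Curve_i^+,\scrX^+_i}$ for every $k$, matching the first case of the formula.

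The principal obstacle is aligning the combinatorial $\mathsf{M}_i$ with the $K$-theoretic transformation induced by the flop equivalence. This requires threading the coset construction of Section~\ref{sec: flops via sim res} through the standard compatibility between simultaneous resolutions and derived equivalences. Once this alignment is recorded, the remaining content is essentially bookkeeping: the restricted-root bijection follows immediately from the coset identity, and the single-curve case is classical.
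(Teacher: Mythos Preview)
Your proposal has a genuine gap in the case $\upbeta\notin\mathbb{Z}\Curve_i$. You want the Bridgeland--Chen flop equivalence to ``transport GV invariants while preserving the curve-class grading,'' and you invoke the contraction algebra as the bridge. But Toda's formula relates $\dim_{\mathbb{C}}\CA$ to a \emph{weighted sum} $\sum_\upbeta n_\upbeta(\upbeta\cdot\mathds{1})^2$, not to individual $n_\upbeta$; there is no way to read off each $n_\upbeta$ from the algebra alone, so ``comparing dimensions in each curve class'' is not a well-defined operation. If instead you try to transport Katz-style moduli of one-dimensional stable sheaves directly through the flop functor, you hit the well-known obstruction that this functor does not preserve the standard heart: already $\scrO_{\Curve_i}(-1)$ is sent to a shift of $\scrO_{\Curve_i^+}(-1)$, so moduli of sheaves are not carried to moduli of sheaves without passing through a tilt and a nontrivial wall-crossing analysis. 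That analysis can be done, but it is substantial work, not bookkeeping; as written, the step ``these are transported by the equivalence'' is exactly the content of the theorem.

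The paper sidesteps all of this by exploiting the very construction in Theorem~\ref{thm: produce flop}. The classifying map for $\scrX_i^+$ is $\upnu=(\ell_\scrI\ell_{\scrI+i})^{-1}\circ\upmu$, so a single perturbation $\upmu_t$ of $\upmu$ simultaneously perturbs $\upnu$ to $\upnu_t=(\ell_\scrI\ell_{\scrI+i})^{-1}\circ\upmu_t$. Since the GV invariants here are \emph{defined} (see \eqref{eqn: nbeta equals intersection with hyperplane}) as the cardinalities $|\upmu_t^{-1}(\scrD_{\upalpha,\C}/W_\scrI)|$, one obtains immediately
\[
n_{\upbeta,\scrX_i^+}=|\upnu_t^{-1}(\scrD_{\upalpha,\C}/W_{\upomega_i(\scrI)})|=|\upmu_t^{-1}(\scrD_{w\cdot\upalpha,\C}/W_\scrI)|,
\]
and the commutative square \eqref{defn Mi} identifies $w\cdot\upalpha$ as a lift of $\mathsf{M}_i\upbeta$. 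The case split then reduces to the elementary positivity statement of Corollary~\ref{cor: action of Mi pos neg}: for $\upbeta\notin\mathbb{Z}\Curve_i^+$ the lift $w\cdot\upalpha$ is already positive, while for $\upbeta\in\mathbb{Z}\Curve_i^+$ one negates (which leaves the hyperplane, hence the intersection count, unchanged). No derived categories, no moduli comparison, no appeal to width invariants for the single-curve case: both cases fall out of the same two-line computation. Your restricted-root bijection is correct and is indeed part of the argument, but it only matches the supports; matching the numbers requires the perturbation identity above, which is precisely what the simultaneous-resolution construction was set up to deliver.
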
 
\noindent In other words, the set of GV invariants is exactly the same before and after the flop; they are just reindexed by the change of basis $\mathsf{M}_i$. Example~\ref{ex: running example GV after flop} illustrates this re-indexing in the case of the running Example~\ref{example: intro}.

\subsection{Applications}
The above results have a series of corollaries. The first is a direct and explicit proof of the Crepant Transformation Conjecture for germs of $3$-fold flopping contractions. Indeed, combining Theorems~\ref{GW=hyper intro} and \ref{thm: GV translate intro} allows us to easily extract the following, which recovers the main result of \cite{LiRuan}. We remark here that our simplified approach also gives more refined information, in the form of the explicit matrix $\mathsf{N}_{i}=(\mathsf{M}_i^{-1})^\star$ which identifies the quantum potentials. Our proof also avoids the use of symplectic cuts, side-stepping the associated technical difficulties.

\begin{cor}[\ref{thm: CTC}] Under the identification of the Novikov parameters given by the explicit matrix $\Msf_i$ of Theorem~\ref{thm: GV translate intro}, the quantum potentials of $\scrX$ and $\scrX_i^+$ coincide, up to a correction term which does not depend on the Novikov parameters, namely
\begin{equation} \label{eqn: CTC intro}  
\Phi^{\scrX_i^+}_{\mathsf{r}}\big(\upgamma_1, \upgamma_2,\upgamma_3\big) - \Phi^{\scrX}_{\mathsf{r}}(\mathsf{N}_i  \upgamma_1,\mathsf{N}_i \upgamma_2,\mathsf{N}_i \upgamma_3) = -(\upgamma_1 \cdot \Curve_i^+)(\upgamma_2 \cdot \Curve_i^+)(\upgamma_3\cdot \Curve_i^+) \sum_{k \geq 1} k^3 n_{k\Curve_i,\scrX} .
\end{equation}
Here $\mathsf{N}_i \colon \mathrm{H}^2(\scrX_{i}^+;\C) \to \mathrm{H}^2(\scrX;\C)$ is the dual of $\mathsf{M}_i^{-1}$. The above identification holds after a specific analytic continuation in the quantum parameters. 
\end{cor}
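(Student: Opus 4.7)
The plan is to reduce the identity \eqref{eqn: CTC intro} to an elementary manipulation of generating functions, assembled from the inputs supplied by the earlier theorems. First I would expand the quantum potential of any smooth $3$-fold flopping contraction via the Gopakumar--Vafa multi-cover formula for genus-$0$ GW invariants of rigid curves, which puts it in closed form
\[
\Phi^{\scrX}_{\mathsf{r}}(\upgamma_1,\upgamma_2,\upgamma_3)=\sum_{\upalpha}n_{\upalpha,\scrX}\,(\upgamma_1\cdot\upalpha)(\upgamma_2\cdot\upalpha)(\upgamma_3\cdot\upalpha)\,\frac{q^{\upalpha}}{1-q^{\upalpha}},
\]
and analogously for $\scrX_i^+$. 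By Theorem~\ref{nonzero GV intro} the sum is supported on the restricted roots, and the argument proceeds by splitting each sum according to whether the class is parallel or transverse to the flopping curve.

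For the transverse part, Theorem~\ref{thm: GV translate intro} gives $n_{\upalpha,\scrX_i^+}=n_{\mathsf{M}_i\upalpha,\scrX}$, while the defining relation $\mathsf{N}_i=(\mathsf{M}_i^{-1})^\star$ yields the compatibility of intersection pairings $(\upgamma\cdot\upbeta)_{\scrX_i^+}=(\mathsf{N}_i\upgamma\cdot\mathsf{M}_i\upbeta)_{\scrX}$. Under the identification of Novikov parameters $q^{\upalpha}\leftrightarrow q^{\mathsf{M}_i\upalpha}$ furnished by $\mathsf{M}_i$, the reindexing $\upbeta=\mathsf{M}_i\upalpha$ turns the transverse contribution to $\Phi^{\scrX_i^+}_{\mathsf{r}}(\upgamma_1,\upgamma_2,\upgamma_3)$ into the transverse contribution to $\Phi^{\scrX}_{\mathsf{r}}(\mathsf{N}_i\upgamma_1,\mathsf{N}_i\upgamma_2,\mathsf{N}_i\upgamma_3)$ term by term, so the entire discrepancy concentrates on the $\ZZ\Curve_i$-piece.

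On this sublattice $\mathsf{M}_i$ acts as multiplication by $-1$, so each factor $\mathsf{N}_i\upgamma_j\cdot k\Curve_i$ equals $-k(\upgamma_j\cdot\Curve_i^+)$ and contributes a global sign $(-1)^3=-1$, whereas Theorem~\ref{thm: GV translate intro} preserves the GV numbers $n_{k\Curve_i^+,\scrX_i^+}=n_{k\Curve_i,\scrX}$. The Novikov identification becomes $q^{\Curve_i}=q^{-\Curve_i^+}$, and setting $x=q^{k\Curve_i^+}$ the elementary identity
\[
\frac{x}{1-x}+\frac{x^{-1}}{1-x^{-1}}=-1
\]
collapses the parallel contribution to a sum independent of the Novikov parameters, producing precisely the advertised correction term.

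The main obstacle is to make sense of this at more than a formal level, since the two potentials are a priori convergent only in disjoint Novikov cones and the substitution $q^{\Curve_i}=q^{-\Curve_i^+}$ has no meaning in either formal series. Here the need for analytic continuation is built into the statement, and Theorem~\ref{GW=hyper intro} is exactly what makes it work: it identifies the pole loci of both potentials with the complexified affine arrangement $\scrH^{\aff}_{\scrI}$, whose complement is connected. The fundamental regions of $\scrX$ and $\scrX_i^+$ share the flop wall as a common face, and Theorem~\ref{thm: produce flop} identifies post-composition by $w_i^{-1}$ as precisely the monodromy carrying one region to the other. The formal identity established above therefore promotes to an honest equality of meromorphic functions on the complement of $\scrH^{\aff}_{\scrI}$, completing the proof.
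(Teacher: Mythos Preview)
Your proposal is correct and follows essentially the same approach as the paper: write both potentials in closed form via the GV expansion (Theorem~\ref{thm: structure of quantum potential}), split into classes parallel and transverse to $\Curve_i$, match the transverse pieces term by term using Theorem~\ref{thm: GV under flop} and the duality $\mathsf{N}_i\upgamma\cdot\mathsf{M}_i\upbeta=\upgamma\cdot\upbeta$, and collapse the parallel piece via the rational identity $\tfrac{x}{1-x}+\tfrac{x^{-1}}{1-x^{-1}}=-1$.

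One remark: your final paragraph on analytic continuation invokes more than is needed. The paper does not use connectedness of the complement of $\scrH^\aff_\scrI$ or any monodromy from Theorem~\ref{thm: produce flop}; since Theorem~\ref{thm: structure of quantum potential} already presents each potential as an explicit rational function in the Novikov parameters, the formal manipulation you perform is automatically an identity of meromorphic functions, and the analytic continuation is simply the observation that the change of variables \eqref{eqn: transformation of Novikov parameters} carries one rational expression to the other.
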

\noindent The correction terms on the right-hand side arise due to the non-compactness of $\scrX$ (see Remarks~\ref{rmk: no algebra} and \ref{rmk: CTC non-compact}). The key point is that the quantum potential of $\scrX_i^+$ can be effectively reconstructed from the quantum potential of $\scrX$. Thus, whilst the GW invariants themselves are not combinatorial, their transformation across the flop \emph{is} combinatorial, which is why we obtain such an elementary proof; compare \cite{LiRuan} and \cite{McLean}. 

The explicit matrix $\mathsf{N}_i$ turns out to have many different incarnations: it arises naturally as the image in K-theory of Bridgeland's flop functor, but more interestingly it can be calculated using very simple Dynkin-style combinatorics (see Remark~\ref{rem: matrix Mk}). However, simply by iterating and multiplying matrices, it is possible to obtain a direct isomorphism between the generating functions of any two crepant resolutions of $\Spec\scrR$.\medskip

 The second corollary is algebraic. The flopping contraction $\scrX\to\Spec \scrR$ has an associated contraction algebra $\CA$ \cite{DW1, DW3}, and it is known by Hua--Toda \cite{HuaToda} for single curves, and Toda \cite{TodaUtah} in general (see \ref{thm: Toda dim formula}), that the dimension of the contraction algebra is determined as the weighted sum of GV invariants 
\[
\dim_{\mathbb{C}}\CA=\sum_{\upbeta\in \Achow_1(\scrX)}n_\upbeta (\upbeta\cdot \mathds{1})^2
\]
where $\upbeta\cdot \mathds{1}$ is the sum of the entries of $\upbeta$.  For any curve $\Curve_i\subseteq\scrX$, the contraction algebra can be intrinsically mutated to obtain $\upnu_i\CA$, and this is the contraction algebra for the flop $\scrX^+_i\to\Spec\scrR$.
  
It is known \cite{Dugas, AugustTiltingTheory} that $\CA$ and $\upnu_i\CA$ are derived equivalent via a two-term tilting complex, but what is surprising here is that their dimension transforms in a very elementary manner, dictated by the K-theory of that derived equivalence.  
\begin{cor}[\ref{cor: Toda formula iterate}]\label{cor: Toda formula iterate intro}
Under mutation at vertex $i$, 
\[
\dim_{\mathbb{C}}\upnu_i\CA=\sum_{\upbeta\in \Achow_1(\scrX)}n_\upbeta \big(\,(\mathsf{M}^{-1}_i\upbeta)\cdot \mathds{1}\big)^2
\]
where $\mathsf{M}_i$ is the explicit matrix in Corollary~\textnormal{\ref{thm: GV translate intro}}.  
\end{cor}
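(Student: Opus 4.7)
The plan is to apply Toda's dimension formula (\ref{thm: Toda dim formula}) to the flop $\scrX_i^+\to\Spec\scrR$, whose contraction algebra is $\upnu_i\CA$, and then transport the resulting sum back to $\scrX$ using the transformation law of Theorem~\ref{thm: GV translate intro}. Applied on the flop side, the cited formula gives
\[
\dim_{\mathbb{C}}\upnu_i\CA=\sum_{\upgamma\in \Achow_1(\scrX_i^+)} n_{\upgamma,\scrX_i^+}\,(\upgamma\cdot \mathds{1})^2,
\]
so the entire proof reduces to a rewriting of the right-hand side.

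I would split this sum according to whether $\upgamma\in\ZZ\Curve_i^+$ or not. For $\upgamma\notin\ZZ\Curve_i^+$, Theorem~\ref{thm: GV translate intro} gives $n_{\upgamma,\scrX_i^+}=n_{\mathsf{M}_i\upgamma,\scrX}$, and substituting $\upbeta=\mathsf{M}_i\upgamma$ rewrites that portion of the sum as
\[
\sum_{\upbeta\notin \ZZ\Curve_i} n_{\upbeta,\scrX}\,\big((\mathsf{M}_i^{-1}\upbeta)\cdot \mathds{1}\big)^2.
\]
Here I use that $\mathsf{M}_i$ restricts to $-\mathrm{id}$ on $\ZZ\Curve_i$, so the bijection $\upgamma\mapsto \mathsf{M}_i\upgamma$ exchanges the two sublattices. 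For the diagonal piece, Theorem~\ref{thm: GV translate intro} gives $n_{k\Curve_i^+,\scrX_i^+}=n_{k\Curve_i,\scrX}$, and the squared pairing $(k\Curve_i\cdot \mathds{1})^2$ is manifestly invariant under $k\mapsto -k$, so the same rewriting $\upbeta=\mathsf{M}_i\upgamma$ applies term by term. Recombining the two pieces yields the claimed formula.

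The main obstacle will be the bookkeeping: one needs to pin down the identification of $\Achow_1(\scrX_i^+)$ with $\Achow_1(\scrX)$ used in Theorem~\ref{thm: GV translate intro}, and verify that under this identification $\mathsf{M}_i$ restricts to $-\mathrm{id}$ on the class of the flopped curve. Both compatibilities are consequences of the explicit definition \eqref{defn Mi}, and reflect the standard fact that the flop functor on K-theory sends $[\mathcal{O}_{\Curve_i}]$ to its negative. Once these are recorded, the argument is a purely formal manipulation of the two sums.
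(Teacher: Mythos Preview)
Your proposal is correct and follows essentially the same route as the paper: apply Toda's formula (\ref{thm: Toda dim formula}) on $\scrX_i^+$, then transport the sum back to $\scrX$ via Theorem~\ref{thm: GV under flop}, with the sign ambiguity on $\ZZ\Curve_i$ absorbed by the square. The only cosmetic difference is that the paper uses the unified formula $n_{\upgamma,\scrX_i^+}=n_{|\mathsf{M}_i\upgamma|,\scrX}$ from Theorem~\ref{thm: GV under flop} and then invokes Corollary~\ref{cor: action of Mi pos neg} to replace $|\mathsf{M}_i^{-1}\upbeta|$ by $\mathsf{M}_i^{-1}\upbeta$ inside the square, whereas you perform the equivalent case split $\upgamma\in\ZZ\Curve_i^+$ versus $\upgamma\notin\ZZ\Curve_i^+$ by hand.
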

The above is remarkable: it says that not only are there just finitely many algebras in the derived equivalence class of the finite dimensional algebra $\CA$ (by \cite{AugustFinite}), furthermore the dimensions of all the other algebras can be easily obtained combinatorially from the first.  The proof of Corollary~\ref{cor: Toda formula iterate intro} is slightly subtle, since it is not a priori clear that the GV invariants defined by Toda are the same as the GV invariants defined here, but this is all discussed in Appendix~\ref{sec: n beta eq n beta}.

\subsection*{Acknowledgements} 
We thank Tom Coates and Misha Feigin for helpful discussions on quantum cohomology, and Jenny August, Ben Davison, Okke van Garderen and Yukinobu Toda for wider discussions on GV invariants and contraction algebras.

\subsection*{Conventions} 
All cDV singularities and related algebraic geometry takes place over $\mathbb{C}$.  Vector spaces will be over $\mathbb{R}$, unless stated otherwise, and the complexification of a vector space $V$ will be written $V_{\mathbb{C}}$.

\section{Root theory, deformations and perturbations}
\noindent Fix an isolated $3$-fold cDV singularity $\Spec\scrR$ with a crepant resolution $f \colon \scrX \to\Spec\scrR$. There is a finite collection of exceptional complete curves in $\scrX$, contracted to a point $p\in \Spec\scrR$, and such that $f$ restricts to an isomorphism on the complement. In other words, $f \colon \scrX \to \Spec \scrR$ is a germ of a smooth $3$-fold flopping contraction, and conversely every such germ arises in this way \cite{Reid}.

\subsection{Elephants}\label{sec: elephant}
The pullback along $f$ of a general hyperplane section through $p \in \Spec\scrR$ is a \emph{partial} crepant resolution of an ADE surface singularity, the so-called general elephant \cite[(1.14)]{Reid}
\begin{equation}
\begin{array}{c}
\begin{tikzpicture}
\node (A) at (0,0) {$\scrX$};
\node (B) at (-2,0) {$Y$};
\node at (-3.4,-1) {$ \mathbb{C}^2/G\cong$};
\node (b) at (-2,-1) {$\Spec \scrR/g$};
\node (a) at (0,-1) {$\Spec \scrR$.};
\draw[->] (B)--(A);
\draw[->] (b)--(a);
\draw[->] (B)--(b);
\draw[->] (A)--(a);
\end{tikzpicture}
\end{array}\label{elephant pullback}
\end{equation}
Let $\Delta$ be the Dynkin diagram associated to $\C^2/G$, and let the composition
\[
Z \to Y\to \C^2/G
\]
be the full minimal resolution. By the McKay correspondence, the exceptional curves $\Curve_i \subseteq Z$ are indexed by the nodes $i \in \Delta$.   We write
\[
 \scrI \subseteq \Delta
\]
for the subset indexing those curves $\Curve_i \subseteq Z$ which are contracted by the morphism $Z\to Y$,  so that the complement $\scrIc = \Delta \setminus \scrI$ indexes the curves that survive. In particular
\[\{ \Curve_i \mid i \in \scrIc \}\]
forms the set of exceptional curves in both $Y$ and $\scrX$, and the group $\Achow_1(Y)=\Achow_1(\scrX)$ is freely generated by their cycle classes.

\begin{notation}\label{notation: Y_I}
Write $Y=Y_{\scrI}$ for the partial resolution of $\mathbb{C}^2/G$ obtained from the full minimal resolution $Z$ by blowing down the curves in $\scrI$.
\end{notation}

The geometry of $\scrX$ will be studied by viewing it as the total space of a one-parameter deformation of $Y_\scrI$. This requires detailed control over the associated root theory, which we establish in the following subsections.
 
\subsection{Root theory}\label{sec: Dynkin recap}
For any Dynkin diagram $\Delta$, let $\mathfrak{h}$ be the $\R$-vector space based by the set of simple roots $\{\upalpha_i \mid i \in \Delta \}$, so that
\[
\mathfrak{h}=\bigoplus_{i\in\Delta}\mathbb{R}\upalpha_i ,
\]
and write $\Uptheta = \mathfrak{h}^\star$ for the dual. The Weyl group $W$ acts naturally on both $\mathfrak{h}$ and $\Uptheta$.  For every positive root $\upalpha \in \mathfrak{h}$, write $\scrD_{\upalpha}\subseteq\mathfrak{h}$ for the perpendicular hyperplane, and write $\mathsf{H}_{\upalpha} \subseteq \Theta$
for the dual hyperplane.

\begin{notation}\label{notation: subset stuff}
For any subset $\scrI\subseteq \Delta$, consider the following data.
\begin{enumerate}
\item The complement $\scrIc = \Delta \setminus \scrI$.
\item The parabolic subgroup $W_{\scrI}\colonequals \langle s_i\mid i\in\scrI\rangle  \subseteq W$.
\item  The $\mathbb{R}$-vector space $\mathfrak{h}_{\scrI}$ obtained as the quotient of $\mathfrak{h}$ by the $\R$-subspace spanned by $\{ \upalpha_i \mid i \in \scrI \}$.  The  associated quotient map will be written
 \[
 \uppi_{\scrI} \colon \mathfrak{h} \to \mathfrak{h}_{\scrI}.
 \] 
Note that $\mathfrak{h}_{\scrI}$ has basis $\{ \uppi_{\scrI}(\upalpha_i) \mid i \in \scrIc\}$ and may be identified with the subspace of $\mathfrak{h}$ based by $\{ \upalpha_i \mid i \in \scrIc \}$.
\item The \emph{restricted positive roots} in $\mathfrak{h}_\scrI$, which are precisely the non-zero images of positive roots under $\uppi_{\scrI}$.
\item For $\upvartheta_i\in\mathbb{R}$ with $i\in\Delta$, write $(\upvartheta_i)=\sum_{i\in\Delta}\upvartheta_i\upalpha_i^\star$, and consider
\[
\Uptheta_{\scrI}\colonequals \{ (\upvartheta_i)\in\Uptheta\mid \upvartheta_i=0\mbox{ for all }i\in\scrI\}\subseteq\Uptheta.
\]
The reflecting hyperplanes in $\Uptheta$ intersect $\Uptheta_{\scrI}$, and in this way $\Uptheta_{\scrI}$ inherits the structure of a finite hyperplane arrangement. Note that $\Uptheta_{\scrI}$ has basis $\{\upalpha_i^\star\mid i \in \scrIc\}$. Of course, $\mathfrak{h}_{\scrI}$ and $\Uptheta_{\scrI}$ are dual, and both have dimension $|\scrIc|$. 
 
\item The set $\mathsf{Cham}(\Delta,\scrI)$ which indexes chambers of $\Uptheta_\scrI$ \cite[1.8]{IyamaWemyssTits}. Combinatorially, $\mathsf{Cham}(\Delta,\scrI)$ can be defined as the set of all pairs $(x,\scrK)$ with $x\in W$ and $\scrK\subseteq\Delta$ for which $W_\scrI x=xW_\scrK$ and $\mathrm{length}(x)=\mathrm{min}\{\mathrm{length}(y)\mid y\in xW_\scrK\}$.
 \end{enumerate}
 \end{notation}
For a given restricted positive root $\upbeta \in \mathfrak{h}_{\scrI}$, there are in general many different positive roots $\upalpha \in \mathfrak{h}$ such that $ \uppi_{\scrI}(\upalpha)=\upbeta$. The following result controls the possible lifts, and is a very mild generalisation of \cite[Lemma~2.4]{BryanKatzLeung}. It is used in the proof of Theorem~\ref{thm: KM Theorem 1(c)}, which in turn is used to relate enumerative invariants to hyperplane arrangements in Section~\ref{sec: enumerative invariants}.

\begin{lemma} \label{lem: Dynkin combinatorics}
For any ADE Dynkin diagram $\Delta$, and any subset $\scrI\subseteq\Delta$, let $\upalpha,\upalpha^\prime \in \mathfrak{h}$ be positive roots such that $\uppi_\scrI(\upalpha), \uppi_{\scrI}(\upalpha^\prime) \in \mathfrak{h}_{\scrI}$ are non-zero. Then the following are equivalent.
\begin{enumerate}
\item $\uppi_{\scrI}(\upalpha)=\uppi_{\scrI}(\upalpha^\prime)$.
\item $\upalpha$ and ${\upalpha^\prime}$ are identified under the action of $W_{\scrI}$ on $\mathfrak{h}$.
\item $\scrD_\upalpha$ and $\scrD_{\upalpha^\prime}$ are identified under the action of $W_{\scrI}$ on $\mathfrak{h}$.
\end{enumerate}
\end{lemma}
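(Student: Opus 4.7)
The plan is to prove $(2) \Rightarrow (1)$, $(2) \Leftrightarrow (3)$, and then the main implication $(1) \Rightarrow (2)$. The first three directions are essentially formal consequences of how the Weyl group acts on roots, whereas the last requires genuine combinatorial input.

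For the easy directions, first note that each simple reflection $s_i$ with $i \in \scrI$ acts by $s_i(\upalpha) = \upalpha - \langle \upalpha, \upalpha_i^\vee\rangle \upalpha_i$, and since $\upalpha_i \in \ker \uppi_\scrI$ whenever $i \in \scrI$, the projection $\uppi_\scrI$ is $W_\scrI$-invariant; this gives $(2) \Rightarrow (1)$. The implication $(2) \Rightarrow (3)$ follows from the obvious $W$-equivariance $w\scrD_\upalpha = \scrD_{w\upalpha}$. For $(3) \Rightarrow (2)$, the relation $w\scrD_\upalpha = \scrD_{\upalpha'}$ forces $w\upalpha = \pm\upalpha'$, and the minus sign can be ruled out as follows: $\uppi_\scrI(\upalpha)$ and $\uppi_\scrI(\upalpha')$ are nonzero $\uppi_\scrI$-images of positive roots, so they are both non-negative integer combinations of $\{\upalpha_j : j \in \scrIc\}$ with at least one strictly positive coefficient, and hence cannot be negatives of one another. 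Combining with the already-established $(2) \Rightarrow (1)$ applied to $w\upalpha = -\upalpha'$ yields the contradiction.

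The real content is $(1) \Rightarrow (2)$. Setting $\upbeta = \uppi_\scrI(\upalpha) = \uppi_\scrI(\upalpha')$, I would consider the fibre $S_\upbeta$ consisting of all positive roots of $\Delta$ that project to $\upbeta$. The first step is to verify that $W_\scrI$ preserves $S_\upbeta$: for $i \in \scrI$, the reflection $s_i$ only alters the coefficient of $\upalpha_i$, so the $\scrIc$-coefficients of $s_i\upgamma$ coincide with those of $\upgamma$; since positivity of a root is determined by the sign of any of its simple-root coefficients (all being of the same sign), $s_i\upgamma$ remains positive and lies in $S_\upbeta$. Once this is in hand, it suffices to prove that $W_\scrI$ acts transitively on $S_\upbeta$, for then $\upalpha$ and $\upalpha'$ necessarily lie in the same $W_\scrI$-orbit.

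For transitivity, I would equip $S_\upbeta$ with the height function $h(\upgamma) = \sum_{i \in \scrI}[\upgamma : \upalpha_i]$, choose a height-minimal $\upgamma_0$, and argue by induction on $h$: for $\upgamma \neq \upgamma_0$ it suffices to find $i \in \scrI$ with $\langle \upgamma, \upalpha_i^\vee\rangle > 0$, since then $s_i\upgamma \in S_\upbeta$ has strictly smaller height. The main obstacle, which is essentially the content of \cite[Lemma~2.4]{BryanKatzLeung}, is to rule out the antidominant situation where $\langle \upgamma, \upalpha_i^\vee\rangle \leq 0$ for every $i \in \scrI$ while $\upgamma \neq \upgamma_0$. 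I would tackle this using the $W$-invariant bilinear form: since $\upgamma - \upgamma_0$ lies in the root lattice indexed by $\scrI$, pairing with $\upgamma$ and exploiting the ADE constraint that distinct positive roots pair to values in $\{-1,0,1\}$ (in the normalisation where roots have squared length $2$) forces $\upgamma = \upgamma_0$. This antidominance step is the only place where simply-lacedness is genuinely used, and I expect it to be the main technical hurdle; the rest of the argument is a direct extension of \emph{loc.\ cit.} to an arbitrary subset $\scrI \subseteq \Delta$, perhaps with mild case-checking across the ADE types.
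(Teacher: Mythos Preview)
Your treatment of the formal directions $(2)\Rightarrow(1)$ and $(2)\Leftrightarrow(3)$ is correct and matches the paper's reasoning; your argument for $(3)\Rightarrow(2)$ differs cosmetically from the paper's (which inducts on the length of $w$ to show $w\upalpha$ stays positive) but is equally valid.

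For $(1)\Rightarrow(2)$ you take a genuinely different route. The paper proceeds by explicit case analysis: it writes down all positive roots in types $A_n$ and $D_n$, exhibits concrete sequences of simple reflections in $W_\scrI$ carrying one lift to another, and verifies $E_6,E_7,E_8$ by computer. Your height-reduction framework is more conceptual and, if it worked, would give a uniform proof. However, the antidominance step has a real gap. Your pairing argument computes
\[
\langle \upgamma-\upgamma_0,\upgamma\rangle \;=\; 2-\langle\upgamma,\upgamma_0\rangle \;\geq\;1,
\]
and you want to contradict this using $\langle\upgamma,\upalpha_i\rangle\leq 0$ for $i\in\scrI$. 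But that only forces $\langle\upgamma-\upgamma_0,\upgamma\rangle\leq 0$ if the coefficients of $\upgamma-\upgamma_0$ in the $\upalpha_i$ are all non-negative. Height-minimality of $\upgamma_0$ gives only that the \emph{sum} of these coefficients is minimal, not that each coefficient is; nothing prevents $\upgamma-\upgamma_0$ from having mixed signs, and indeed one can check that both positive and negative coefficients must occur whenever $\langle\upgamma,\upgamma_0\rangle\in\{-1,0\}$. So the contradiction does not follow as stated, and the cases $\langle\upgamma,\upgamma_0\rangle\neq 1$ are left unhandled.

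This gap is exactly where the work lies, and it is not clear that the pairing idea alone suffices to close it. The paper's case-by-case approach sidesteps the issue entirely at the cost of uniformity; if you want to salvage the conceptual route you will need a sharper argument for uniqueness of the $W_\scrI$-antidominant lift (for instance, an argument that the antidominant lift is in fact coordinate-wise minimal among all lifts, which is stronger than height-minimal).
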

\begin{proof} 
Since $\scrI$ is fixed, to ease notation set $\uppi=\uppi_{\scrI}$.\medskip

\noindent (1)$\Rightarrow$(2) This is the only difficult part, and proceeds by case analysis. Consider first the $A_n$ root system, where the positive roots are precisely the connected chains of $1$s on the Dynkin graph
\[
\begin{array}{c@{\hspace{2pt}}c@{\hspace{2pt}}c@{\hspace{2pt}}c@{\hspace{2pt}}c@{\hspace{1pt}}c@{\hspace{2pt}}c@{\hspace{2pt}}c@{\hspace{2pt}}c@{\hspace{2pt}}c@{\hspace{2pt}}c@{\hspace{2pt}}}
\upalpha_{ij}\colonequals \ 0&\hdots &0&1& \hdots &1&0&\hdots&0 
\quad (1 \leq i \leq j \leq n).\\[-0.5mm]
&&& i && j
\end{array}
\]
By definition of the action of $W$ on $\mathfrak{h}$, the reflection $s_i$ acts by
\begin{equation}
s_i(\upalpha_{ij})=\upalpha_{i+1\, j} \qquad \text{for $i<j$}\label{action Type A}
\end{equation}
which has the effect of replacing the leftmost $1$ with a $0$. Similarly $s_j(\upalpha_{ij})=\upalpha_{i\, j-1}$ for $i < j$, which has the effect of replacing the rightmost $1$ with a $0$.

Suppose now that we are given positive roots $\upalpha_{ij}$ and $\upalpha_{kl}$, and assume without loss of generality that $i \leq k$. Since $\uppi(\upalpha)=\uppi(\upalpha^\prime)$ is nonzero, the two chains of $1$s must overlap, and any position at which they do not overlap must be indexed by an element of $\scrI$. By iteratively applying \eqref{action Type A}, we can shorten both $\upalpha_{ij}$ and $\upalpha_{kl}$ using only elements of $W_\scrI$ and force them to line up on the left, at the leftmost element belonging to the overlap. They can similarly be forced to line up on the right as well, proving the claim. 

The case $D_n$ for $n \geq 4$ is similar in spirit, albeit more involved. As usual, write $\upalpha_1,\ldots,\upalpha_n$ for the simple roots, then the positive roots are given by the following linear combinations. Note that the support of each is a connected subset of the Dynkin graph.
\begin{align*}
& \begin{array}{c@{\hspace{2pt}}c@{\hspace{2pt}}c@{\hspace{2pt}}c@{\hspace{2pt}}c@{\hspace{1pt}}c@{\hspace{2pt}}c@{\hspace{2pt}}c@{\hspace{2pt}}c@{\hspace{2pt}}c@{\hspace{2pt}}l@{\hspace{2pt}}}
&&&&&&&&q &&\qquad 1 \leq i \leq j \leq n-2,\\ 
\upalpha_{ij}^{pq}\colonequals \ 0 & \hdots &0&1& \hdots &1&0&\hdots &0&p& \qquad p,q\in \{0,1\},\\
&&& i && j &&&&& \qquad (p,q) \neq (0,0) \Rightarrow j=n-2 \end{array}\\ \,
& \begin{array}{c@{\hspace{2pt}}c@{\hspace{2pt}}c@{\hspace{2pt}}c@{\hspace{2pt}}c@{\hspace{1pt}}c@{\hspace{2pt}}c@{\hspace{2pt}}c@{\hspace{2pt}}c@{\hspace{2pt}}c@{\hspace{2pt}}c@{\hspace{2pt}}}
&&&&&&&& 1\\ 
\upbeta_{ij}\colonequals \ 0&\hdots & 0 & 1 & \hdots & 1 & 2 & \hdots & 2 & 1 & \qquad 1 \leq i < j \leq n-2 \\[-0.5mm]
&&& i &&& j
\end{array}
\end{align*}
In addition, the collection $\upalpha_{ij}^{pq}$ includes two special cases $\upalpha_{n-1}$ and $\upalpha_n$, which we interpret as $(p,q)=(1,0)$ and $(p,q)=(0,1)$ with the string of $1$s between $i$ and $j$ being empty.

For every $1 \leq i \leq n$ the reflection $s_i$ acts on the coefficient in the $i$th position by negating it and then adding the sum of the coefficients in adjacent positions. The coefficients in all other positions are left unchanged.

Consider now positive roots $\upalpha, \upalpha^\prime$ with $\uppi(\upalpha)=\uppi(\upalpha^\prime)\neq 0$. We work through the different cases. The first case is $\upalpha=\upalpha_{ij}^{pq}$ and $\upalpha^\prime = \upalpha_{kl}^{rs}$. If $(p,q) \neq (r,s)$ then we can use the reflections $s_{n-1},s_n$ to transform $\upalpha,\upalpha^\prime$ until $(p,q)=(r,s)$. For instance, if $(p,q)=(0,1)$ and $(r,s)=(0,0)$ then since $\uppi(\upalpha)=\uppi(\upalpha^\prime)$ we must have $n \in \scrI$. Applying $s_n \in W_{\scrI}$ to $\upalpha$ replaces $(p,q)=(0,1)$ with $(p,q)=(0,0)$. The other cases are similar, and so we may assume that $(p,q)=(r,s)$.  Given this, we follow the strategy in the $A_n$ case. The only difference is when $j=n-2$ and $l < n-2$, in which case $(p,q)=(r,s)=(0,0)$ since the chain of $1$s is connected.  Then $s_{n-2}\in W_{\scrI}$ and $s_{n-2} (\upalpha_{i\,n-2}^{00}) = \upalpha_{i\, n-3}^{00}$.  This, and all other situations, then simply mirror the  $A_n$ proof.

The next case is $\upalpha = \upalpha_{ij}^{pq}$ and $\upalpha^\prime = \upbeta_{kl}$.
If $i \leq k$ then necessarily $j \geq k$ since otherwise the supports do not overlap. In particular $s_i, s_{i+1}, \ldots, s_{k-1} \in W_{\scrI}$ which gives
\begin{equation*} 
(s_{k-1} \circ \cdots \circ s_i)(\upalpha_{ij}^{pq}) = \upalpha_{kj}^{pq}.
\end{equation*}
Similarly if $i \geq k$ we use $s_{k} \circ \cdots \circ s_{i-1}$ to achieve the same transformation. For the next step, notice that $s_{j+1},\ldots,s_{n-2} \in W_{\scrI}$, and applying these left-to-right gives
\begin{equation*} 
(s_{n-2} \circ \cdots \circ s_{j+1})(\upalpha_{kj}^{pq}) = \upalpha_{k\,n-2}^{pq}. 
\end{equation*}
As in the previous case, if $(p,q) \neq (1,1)$ then we may use the reflections $s_{n-1},s_n$ to transform $\upalpha_{k\,n-2}^{pq}$ into $\upalpha_{k\,n-2}^{11}$. Now, going back from right-to-left gives the required
\begin{equation*} 
(s_l \circ \cdots \circ s_{n-2})(\upalpha_{k\,n-2}^{11}) = \upbeta_{kl}.
\end{equation*}
The next case is $\upalpha=\upbeta_{ij}$ and $\upalpha^\prime = \upbeta_{kl}$, where without loss of generality $i \geq k$. But then $s_{i-1},\ldots,s_k \in W_{\scrI}$ which is applied directly to give $(s_k \circ \cdots \circ s_{i-1})(\upbeta_{ij}) = \upbeta_{kj}$. If $j \geq l$, then $s_{j-1},\ldots,s_l \in W_{\scrI}$, and $(s_{l}\circ \cdots \circ s_{j-1})(\upbeta_{kj}) = \upbeta_{kl}$. The case $j \leq l$ is similar. 

This completes the proof for $D_n$, except for some special cases involving $\upalpha_{n-1}
$ and $\upalpha_n$. These are more elementary, and so are left to the reader.

The remaining cases $E_6,E_7,E_8$ encompass finitely many possibilities. It is possible to verify these by hand, but it is also possible to employ computer algebra \cite{magma}. Source code is available from the authors upon request. This completes the proof of (1)$\Rightarrow$(2).\medskip

\noindent (2)$\Rightarrow$(1) This holds since applying elements of $W_\scrI$ to a given positive root cannot change the coefficients associated to elements of $\scrIc$.\medskip
 
\noindent (2)$\Rightarrow$(3) If $s \in W$ and $v \in \mathfrak{h}$ then $s(\scrD_v)=\scrD_{s(v)}$ since $s$ preserves the Cartan pairing. In particular this applies when $v=\upalpha$ is a positive root and $s(v)=\upalpha^\prime$ is its image.\medskip
 
\noindent (3)$\Rightarrow$(2) Suppose there is an element $s \in W_{\scrI}$ such that $s(\scrD_{\upalpha})=\scrD_{\upalpha^\prime}$. We then have $\scrD_{\upalpha^\prime} = \scrD_{s(\upalpha)}$ and since both $\upalpha^\prime$ and $s(\upalpha)$ are roots it follows that $s(\upalpha)$ equals either $\upalpha^\prime$ or $-\upalpha^\prime$. We claim that $s(\upalpha)$ must be a positive root, thus ruling out the latter possibility. If $s=s_i$ for some $i \in \scrI$, then $s_i$ permutes the set of positive roots excluding $\upalpha_i$. But certainly $\upalpha \neq \upalpha_i$ since $\uppi(\upalpha) \neq 0$, so $s_i(\upalpha)$ must be a positive root. The argument for general $s$ follows by induction, since $\uppi(\upalpha)=\uppi(s_i(\upalpha))$ for any $\upalpha$ and any $s_i$ with $i \in \scrI$.\end{proof}

\subsection{Hyperplane arrangements: finite and infinite} \label{sec: hyperplane arrangements} To the above data of a Dynkin diagram $\Delta$ and a subset $\scrI \subseteq \Delta$ it is possible to associate two hyperplane arrangements encoding the set of restricted positive roots: one finite and one infinite \cite{IyamaWemyssTits}.  Recalling Notation~\ref{notation: subset stuff}, both arrangements live inside $\Uptheta_{\scrI}\cong\mathbb{R}^{|\scrIc|}$.

The real vector space $\mathfrak{h}_\scrI$ is dual to $\Uptheta_{\scrI}$, and so for each restricted positive root $0\neq\upbeta=\uppi_\scrI(\upalpha)\in \mathfrak{h}_\scrI$ we may consider the dual hyperplane
\[ 
\Hyp_\upbeta\colonequals \{ (\upvartheta_i) \mid \textstyle\sum\upbeta_i\upvartheta_i=0 \} \subseteq \Uptheta_{\scrI}. 
\]
Since there are only finitely many restricted roots, the collection of $\Hyp_\upbeta$ forms a finite hyperplane arrangement in $\Uptheta_{\scrI}$, which we refer to as the \emph{finite linear arrangement}, namely
\begin{equation} \label{eqn: finite arrangement} 
\scrH_\scrI \colonequals \left\{ \Hyp_\upbeta \mid \upbeta \text{ is a restricted positive root}\right\}. 
\end{equation}
The above list includes repetitions, whenever two restricted roots are proportional. As in Example~\ref{example: intro}, we remember these repetitions by attaching a finite list of multiplicities to each hyperplane. We refer to this data as the \emph{enhanced finite arrangement}. 

\begin{remark}In the setting of Subsection~\ref{sec: elephant}, $\scrH_\scrI$ is the set of walls in the movable cone of $\scrX$ \cite{Pinkham,HomMMP}. However, the movable cone does not remember multiplicities.\end{remark}

To define the infinite arrangement, for each restricted positive root $\upbeta$, consider the infinite disjoint union of affine hyperplanes in $\Uptheta_\scrI$ defined by
\[ 
\Hyp_\upbeta^{\aff} = \{ (\upvartheta_i)\mid \textstyle\sum_{i\in\scrIc}\upbeta_i\upvartheta_i=z\mbox{ for some } z\in\mathbb{Z} \}.
\]
The infinite affine arrangement $\scrH_\scrI^\aff$ is then defined to be
\begin{equation} \label{eqn: infinite arrangement} 
\scrH_\scrI^\aff \colonequals \bigcup_\upbeta \Hyp_\upbeta^\aff .
\end{equation}
There is an inclusion $\scrH_\scrI \subseteq \scrH_\scrI^\aff$, as $\scrH_\scrI$ can be recovered from $\scrH_\scrI^\aff$ as the subset of hyperplanes which pass through the origin. On the other hand to construct $\scrH_\scrI^\aff$ from the finite arrangement $\scrH_\scrI$ it is necessary to remember the multiplicities, since $\Hyp_{2\upbeta}^{\aff}\neq\Hyp_\upbeta^{\aff} $.

\subsection{Simultaneous partial resolution} \label{sec: sim partial res I} The enumerative geometry of the $3$-fold $\scrX$ will be studied by replacing $\scrX$ with a generic perturbation, a strategy employed by many authors \cite{MorrisonKaehler,WilsonGW,BryanKatzLeung}. In the first instance this will allow us to qualitatively characterise the GV invariants, and extract the poles of the quantum product. However the real strength in this approach, and indeed our new contribution, is to use the wall crossing formula from \cite{IyamaWemyssTits} to construct iterated flops via simultaneous (partial) resolution. This iteration step is harder, and so will be delayed until Section~\ref{sec: flops via sim res}.

In this subsection we simply recall the necessary background and set notation, largely following \cite[Section~2]{BryanKatzLeung}, together with \cite{Brieskorn, Pinkham, Friedman, KatzMorrison}.

\subsubsection{Simultaneous Resolution} \label{sec: pref sim res}
With notation as in Subsection~\ref{sec: Dynkin recap}, given any Kleinian singularity $\mathbb{C}^2/G$ with associated Dynkin diagram $\Delta$, consider the complex vector space $\mathfrak{h}_{\mathbb{C}}=\bigoplus_{i\in\Delta}\mathbb{C}\upalpha_i$, based by the simple roots.   Write $\Spec\scrV$ for a versal deformation of $\mathbb{C}^2/G$, then as is very well known, base changing with respect to the Weyl group
\[
\begin{tikzpicture}
\node (U) at (2,-1.5) {$\Spec \scrW$};
\node (V) at (4,-1.5) {$\Spec \scrV$};
\node (PRes) at (2,-3) {$\mathfrak{h}_{\mathbb{C}}$};
\node (Res) at (4,-3) {$\mathfrak{h}_{\mathbb{C}}/W$};
\draw[->,densely dotted] (U)--(V);
\draw[->] (V)--(Res);
\draw[->,densely dotted] (U)--(PRes);
\draw[->] (PRes)--(Res);
\end{tikzpicture}
\]
gives $\Spec\scrW\to\mathfrak{h}_{\mathbb{C}}$, which admits a simultaneous resolution.  Since $\dim\scrW\geq 3$, there are in fact many such simultaneous resolutions, since minimal models are not unique.

In \cite[Theorem 1]{KatzMorrison} Katz--Morrison construct a particular simultaneous resolution, from a particular $\Spec\scrV\to\mathfrak{h}_{\mathbb{C}}/W$, for which positive roots and their hyperplanes control those curve classes that survive under deformation \cite[Theorem 1(c)]{KatzMorrison}. We will recap this result in greater generality in Theorem~\ref{thm: KM Theorem 1(c)} below, but for now write $\scrZ\to\Spec\scrW$ for this preferred resolution. Katz--Morrison refer to their particular choice of $\scrZ$ as the standard simultaneous resolution \cite[Section~6]{KatzMorrison}.

\subsubsection{Simultaneous Partial Resolution} 
Given any subset $\scrI\subseteq \Delta$, consider the standard simultaneous resolution $\scrZ\to\Spec\scrW$ from Subsection~\ref{sec: pref sim res} above.  As explained in \cite[above Theorem 3]{KatzMorrison} following \cite{Pinkham} it is possible to blow down $\scrZ$ at the curves in $\scrI$ and take the quotient by $W_{\scrI}$ to obtain $\scrY_{\scrI}$, which sits in the following commutative diagram
\begin{equation}
\begin{array}{c}
\begin{tikzpicture}
\node (ZZ) at (0,1.5) {$\scrZ$};
\node (B) at (0,0) {$\scrY_\scrI^\dag$};
\node (Z) at (2,0) {$\scrY_{\scrI}$};
\node (R) at (0,-1.5) {$\Spec \scrW$};
\node (U) at (2,-1.5) {$\Spec \scrV_{\scrI}$};
\node (V) at (4,-1.5) {$\Spec \scrV$};
\node (t) at (0,-3) {$\mathfrak{h}_\C$};
\node (PRes) at (2,-3) {$\mathfrak{h}_{\mathbb{C}}/W_{\scrI}$};
\node (Res) at (4,-3) {$\mathfrak{h}_{\mathbb{C}}/W$};
\draw[->] (ZZ)--(B);
\draw[->] (B) -- (Z);
\draw[->] (B) -- (R);
\draw[->] (Z)--node[right]{$\scriptstyle \mathsf{g}_{\scrI}$}(U);
\draw[->] (R)--(U);
\draw[->] (U)--(V);
\draw[->] (V)--(Res);
\draw[->] (R)--(t);
\draw[->] (U)--node[right]{$\scriptstyle \mathsf{h}_{\scrI}$}(PRes);
\draw[->] (t)--(PRes);
\draw[->] (PRes)--(Res);
\end{tikzpicture}
\end{array}\label{preferred sim partial res}
\end{equation}
with all squares cartesian.  By construction, the fibre $(\mathsf{h}_\scrI\circ \mathsf{g}_{\scrI})^{-1}(0)$ is the partial resolution of $\mathbb{C}^2/G$ obtained from the full minimal resolution by blowing down the curves in $\scrI$. Namely, recalling Notation~\ref{notation: Y_I}, $(\mathsf{h}_\scrI\circ \mathsf{g}_{\scrI})^{-1}(0)=Y_{\scrI}$. 

In a similar way as in Subsection~\ref{sec: pref sim res}, the middle morphism $\mathsf{h}_\scrI\colon\Spec\scrV_{\scrI}\to\mathfrak{h}_{\mathbb{C}}/W_{\scrI}$ admits simultaneous \emph{partial} resolutions.  Again these are not unique, however we will refer to the choice $\scrY_{\scrI}\to\Spec\scrV_\scrI$ constructed above as the standard simultaneous partial resolution associated to $\scrI$.  From our perspective, the point is that $\scrY_{\scrI}$ is precisely the partial simultaneous resolution for which Theorem~\ref{thm: KM Theorem 1(c)} below holds.

\subsection{Surface deformations via simultaneous partial resolution} \label{sec: deformation theory}
Fix a subset $\scrI\subseteq\Delta$ and consider the composition
\[ \mathsf{s}_\scrI = \mathsf{h}_{\scrI}\circ \mathsf{g}_{\scrI}\colon \scrY_{\scrI}\to \mathfrak{h}_{\mathbb{C}}/W_{\scrI}\]
from \eqref{preferred sim partial res}. This is a versal deformation of the surface $Y_\scrI$.

\begin{definition} \label{def: standard disc locus}
The standard discriminant locus
\[
\scrD_{\scrI} \subseteq \mathfrak{h}_{\mathbb{C}}/W_{\scrI}
\]
is the set of points $p \in \mathfrak{h}_{\mathbb{C}}/W_{\scrI}$ such that the fibre $\mathsf{s}_\scrI^{-1}(p)$ contains a complete curve. 
\end{definition}
There is a similar definition of a discriminant locus associated to any simultaneous partial resolution: the word standard in Definition~\ref{def: standard disc locus} emphasises the choice made in \eqref{preferred sim partial res}.  The following discussion draws heavily on \cite[Theorem 1]{KatzMorrison} as used in \cite[Proposition 2.2]{BryanKatzLeung}, while also incorporating Lemma~\ref{lem: Dynkin combinatorics} above to relate the resulting combinatorics to the enhanced movable cone. 

To set notation, recall that $\scrD_{\upalpha}\subseteq\mathfrak{h}$ is the hyperplane perpendicular to $\upalpha$, and let $\scrD_{\upalpha,\C}\subseteq \mathfrak{h}_\C$ denote its complexification. Recall from Notation~\ref{notation: subset stuff} that for $\scrI \subseteq \Delta$ there is a quotient map $\uppi_\scrI \colon\mathfrak{h}\to\mathfrak{h}_{\scrI}$, where the vector space $\mathfrak{h}_{\scrI} $ has basis $\{ \uppi_{\scrI}(\upalpha_i) \mid i \in \scrIc\}$, and that there is a   natural identification
\begin{align} 
\mathfrak{h}_{\scrI} & \cong \Achow_1(Y_{\scrI})_{\mathbb{R}}\label{eq:ident A with comb}\\
\uppi_{\scrI}(\upalpha_i) & \mapsto \Curve_i.\nonumber
\end{align}
Every restricted positive root $\uppi_\scrI(\upalpha) \in \mathfrak{h}_{\scrI}$ has non-negative integer coefficients, and so may be interpreted as a curve class $\uppi_\scrI(\upalpha)=\upbeta \in \Achow_1(Y_{\scrI})$.

\begin{thm}[Katz--Morrison] \label{thm: KM Theorem 1(c)} For any subset $\scrI\subseteq\Delta$, the following statements hold.
\begin{enumerate}
\item The standard discriminant locus $\scrD_\scrI \subseteq \mathfrak{h}_\C/W_\scrI$ from Definition~\ref{def: standard disc locus} decomposes as
\[
\scrD_{\scrI}=\bigcup_{\uppi_{\scrI}(\upalpha)\neq 0}\,\,\scrD_{\upalpha,\C}/W_\scrI
\]
where the union is over all positive roots $\upalpha$ such that $\uppi_{\scrI}(\upalpha)\neq 0$. The irreducible components $\scrD_{\upalpha,\C}/W_\scrI \subseteq \scrD_\scrI$ are indexed by the restricted positive roots $\uppi_\scrI(\upalpha)$.
\item For $p\in\scrD_{\upalpha,\C}/W_\scrI$ the fibre $\mathsf{s}_\scrI^{-1}(p)$ is a deformation of $Y_{\scrI}$ containing a complete curve of class $\upbeta \colonequals \uppi_\scrI(\upalpha)$. If in addition $p$ does not belong to any other component of $\scrD_\scrI$, then this is the only complete curve in $\mathsf{s}_\scrI^{-1}(p)$.
\item If $\upbeta \in \Achow_1(Y_\scrI)$ is not a restricted positive root, then there are no deformations of $Y_\scrI$ containing a complete curve of class $\upbeta$.
\end{enumerate}
\end{thm}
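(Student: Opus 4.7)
The plan is to deduce the theorem from the classical Brieskorn--Grothendieck--Katz--Morrison analysis of the full simultaneous resolution $\scrZ \to \mathfrak{h}_\C$, and then track what happens under the two-step descent $\scrZ \to \scrY_\scrI^\dag \to \scrY_\scrI$ appearing in diagram~\eqref{preferred sim partial res}. For the full simultaneous resolution, \cite[Theorem~1]{KatzMorrison} establishes that the discriminant in $\mathfrak{h}_\C$ is $\bigcup_{\upalpha} \scrD_{\upalpha,\C}$ over positive roots, and that over a generic point of $\scrD_{\upalpha,\C}$ (avoiding all other root hyperplanes) the fibre $\scrZ_q$ contains a unique $(-2)$-curve, of class $\upalpha$ under the identification $\Achow_1(Z) \cong \mathfrak{h}$.

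Passing from $\scrZ$ to $\scrY_\scrI^\dag$ contracts precisely the relative curves indexed by $\scrI$, so under \eqref{eq:ident A with comb} a complete curve of class $\upalpha$ survives iff its image $\uppi_\scrI(\upalpha) \in \mathfrak{h}_\scrI$ is nonzero, in which case its new class is $\upbeta \colonequals \uppi_\scrI(\upalpha)$. Hence the discriminant of $\scrY_\scrI^\dag \to \mathfrak{h}_\C$ is the restricted union $\bigcup_{\uppi_\scrI(\upalpha) \neq 0} \scrD_{\upalpha,\C}$. Descending further along the quotient $\mathfrak{h}_\C \to \mathfrak{h}_\C/W_\scrI$ that converts $\scrY_\scrI^\dag$ into $\scrY_\scrI$, two such hyperplanes $\scrD_{\upalpha,\C}$ and $\scrD_{\upalpha',\C}$ collapse to the same component of $\scrD_\scrI$ iff they lie in a single $W_\scrI$-orbit. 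By Lemma~\ref{lem: Dynkin combinatorics} (specifically (3)$\Leftrightarrow$(1)), this happens precisely when $\uppi_\scrI(\upalpha) = \uppi_\scrI(\upalpha')$; thus the irreducible components of $\scrD_\scrI$ are in bijection with restricted positive roots, which gives (1).

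For (2), given a generic $p \in \scrD_{\upalpha,\C}/W_\scrI$ not on any other component, lift to a preimage $q$ lying only on $\scrD_{\upalpha,\C}$ and its $W_\scrI$-translates (possible because the preimages of other components form $W_\scrI$-orbits disjoint from that of $\scrD_{\upalpha,\C}$). Then $\scrZ_q$ has a single $(-2)$-curve, of class $\upalpha$, which descends through the two contractions to the unique complete curve in $\mathsf{s}_\scrI^{-1}(p)$, now of class $\upbeta$. Statement (3) is then automatic from versality of $\mathsf{s}_\scrI$: any deformation of $Y_\scrI$ carrying a complete curve is pulled back via a classifying map from some $p \in \scrD_\scrI$, and (1)--(2) force its class to be a restricted positive root. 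The genuinely subtle step is the combinatorial input supplied by Lemma~\ref{lem: Dynkin combinatorics}; without it, one cannot match $W_\scrI$-orbits of hyperplanes to restricted roots, and the indexing of components in (1) could a priori over- or under-count. Everything else is bookkeeping ensured by the compatibility built into the standard simultaneous partial resolution~\eqref{preferred sim partial res}.
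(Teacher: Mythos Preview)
Your proposal is correct and follows essentially the same route as the paper: reduce to the $\scrI=\emptyset$ case via \cite[Theorem~1]{KatzMorrison}, track complete curves through the blowdown-and-quotient $\scrZ\to\scrY_\scrI$, and invoke Lemma~\ref{lem: Dynkin combinatorics} to identify the irreducible components of $\scrD_\scrI$ with restricted positive roots. One small wording issue: in your argument for uniqueness in (2), the lift $q$ may also meet hyperplanes $\scrD_{\upalpha',\C}$ with $\uppi_\scrI(\upalpha')=0$ (those contributing to $\scrE_\scrI$), and you should choose $q$ generic enough to lie on a \emph{single} hyperplane from the $W_\scrI$-orbit rather than several; but the extra curves from $\scrE_\scrI$-hyperplanes are contracted under $\scrZ\to\scrY_\scrI^\dag$, and the single-hyperplane refinement is a codimension argument, so neither affects the conclusion.
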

\begin{proof} 
(1) If $\scrI=\emptyset$ then by \cite[Theorem 1(3)]{KatzMorrison} (see also \cite[Proposition~2.2]{BryanKatzLeung}) there is a decomposition of the standard discriminant locus
\[ 
\scrD_\emptyset = \bigcup_\upalpha \scrD_{\upalpha,\C} \subseteq \mathfrak{h}_\C 
\]
where the union is over all positive roots $\upalpha$. The analogous decomposition for general $\scrI$ follows by considering the standard simultaneous resolution $\scrZ$ of the standard simultaneous partial resolution $\scrY_\scrI$ from \ref{sec: sim partial res I}
\bcd
\scrZ \ar[r] \ar[d,"\mathsf{t}_\scrI"] & \scrY_\scrI \ar[d,"\mathsf{s}_\scrI"] \\
\mathfrak{h}_\C \ar[r,"\upphi_\scrI"] & \mathfrak{h}_\C/W_\scrI.
\ecd
The map $\scrZ \to \scrY_\scrI$ is given by blowing down $\scrZ$ at the curves in $\scrI$ and then taking the quotient by $W_\scrI$.

Fixing a point $p \in \mathfrak{h}_\C$, it follows that the fibre $\mathsf{s}_\scrI^{-1}(\upphi_\scrI(p))$ contains a complete curve if and only if the fibre $\mathsf{t}_\scrI^{-1}(p)$ contains a complete curve which is not blown down. Again by \cite[Theorem 1(3)]{KatzMorrison}, the fibre $\mathsf{t}_\scrI^{-1}(p)$ contains a complete curve if and only if $p \in \scrD_{\upalpha,\C}$ for some positive root $\upalpha$, and this curve is not blown down if and only if $\uppi_\scrI(\upalpha) \neq 0$. This produces the desired decomposition of $\scrD_\scrI$. It then follows from Lemma~\ref{lem: Dynkin combinatorics} that the components $\scrD_{\upalpha,\C}/\scrW_\scrI$ are indexed by the restricted positive roots $\uppi_\scrI(\upalpha)$, i.e. $\scrD_{\upalpha,\C}/W_{\scrI}=\scrD_{\upalpha^\prime,\C}/W_{\scrI}$ if and only if $\uppi_\scrI(\upalpha)=\uppi_\scrI(\upalpha^\prime)$.

\noindent (2)(3) First recall what it means for a curve in $\mathsf{s}_\scrI^{-1}(p)$ to have class $\upbeta \in \Achow_1(Y_\scrI)$. The inclusion of the central fibre $\mathsf{i}_0 \colon \mathsf{s}_\scrI^{-1}(0) = Y_\scrI \hookrightarrow \scrY_\scrI$ induces an isomorphism
\[ 
\mathsf{i}_{0\star} \colon \Achow_1(Y_\scrI) \xrightarrow{\sim} \Achow_1(\scrY_\scrI).
\]
Now consider an arbitrary fibre $\mathsf{s}_\scrI^{-1}(p)$ with inclusion $\mathsf{i}_p \colon \mathsf{s}_\scrI^{-1}(p) \hookrightarrow \scrY_\scrI$. If  $\Curve \subseteq \mathsf{s}_\scrI^{-1}(p)$ is a complete curve, then $\Curve$ has class $\upbeta \in \Achow_1(Y_\scrI)$ if $\mathsf{i}_{p\star} \Curve = \mathsf{i}_{0\star} \upbeta$. The same definition applies to the full simultaneous resolution $\scrZ$. Both (2) and (3) are known for $\scrZ$ by \emph{loc. cit.}, and the general case follows by tracking curve classes from $\scrZ$ to $\scrY_\scrI$.
\end{proof}

\begin{remark} \label{rmk: proportional but distinct}
Consider classes $\upbeta,\upbeta^\prime \in \Achow_1(Y_{\scrI})$ which are proportional but distinct,  i.e.\ $k\upbeta = k^\prime \upbeta^\prime$ for some distinct integers $k,k^\prime \geq 1$. If both $\upbeta, \upbeta^\prime$ are  restricted positive roots, then the lifts $\upalpha,\upalpha^\prime$ will \emph{not} be proportional, by the root system axioms. In particular, the corresponding components of the standard discriminant locus $\scrD_{\scrI}$ will be distinct. Every component of the discriminant locus therefore corresponds to a unique curve class $\upbeta$. This is in contrast to the components of the hyperplane arrangement $\scrH_\scrI \subseteq \Uptheta_\scrI$ from Subsection~\ref{sec: hyperplane arrangements}. \end{remark}

\begin{remark} \label{rem: def of E}
The description of the standard discriminant locus $\scrD_\scrI$ in Theorem~\ref{thm: KM Theorem 1(c)} is a union over positive roots $\upalpha \in \mathfrak{h}$ such that $\uppi_\scrI(\upalpha) \neq 0$. The complementary union of hyperplane quotients
\[
\scrE_{\scrI}=\bigcup_{\uppi_{\scrI}(\upalpha)= 0}\,\,\scrD_{\upalpha,\C}/W_\scrI
\]
parametrise points $p \in \mathfrak{h}_{\mathbb{C}}/W_{\scrI}$ such that the fibre $\mathsf{s}_\scrI^{-1}(p)$ is singular. Clearly
\[ 
\scrD_\emptyset/W_\scrI = \scrD_\scrI \cup \scrE_\scrI.
\]
For a generic point $p \in \scrE_\scrI$, the fibre $\mathsf{s}_\scrI^{-1}(p)$ contains a single $A_1$ singularity. The locus $\scrE_\scrI$ will play a less central role than $\scrD_{\scrI}$. The translation between our notation and that of \cite[Section~2]{BryanKatzLeung} is as follows: $\scrD_{\scrI} = D^{\operatorname{curv}}, \scrE_{\scrI} = D^{\operatorname{sing}}$ and $\scrD_{\scrI} \cup \scrE_{\scrI} = D$.
\end{remark}

\subsection{$3$-fold perturbations via surface deformations} \label{sec: deforming to Xt} \label{sec: perturbing target}
Given the flopping contraction $\scrX\to\Spec\scrR$, a choice of local equation for the hypersurface $\Spec\scrR/g \subseteq \Spec \scrR$ produces a flat family $\Spec \scrR \to \formaldisc$ over a formal disc, with central fibre an ADE surface singularity. By composition this produces a flat family $\scrX \to \formaldisc$ with central fibre the partial resolution $Y\cong Y_{\scrI}$ of the ADE singularity \cite{Reid}. This exhibits $\scrX$, respectively $\Spec\scrR$, as the total space of a one-parameter deformation of the surface $Y_{\scrI}$, respectively $\Spec \scrR/g$. These deformations are induced by an associated classifying map
\[
\upmu \colon \formaldisc \to  \mathfrak{h}_{\mathbb{C}}/W_{\scrI}
\]
where $\mathfrak{h}_{\mathbb{C}}/W_{\scrI}$ is as described in the previous subsection, and the contraction $\scrX \to \Spec \scrR$ is obtained from the simultaneous partial resolution of Subsection~\ref{sec: sim partial res I} by base change
\bcd
\scrX \ar[d,"f" left] \ar[r] & \scrY_{\scrI} \ar[d,"\mathsf{g}_{\scrI}"] \\
\Spec \scrR \ar[r] \ar[d] & \Spec \scrV_\scrI \ar[d,"\mathsf{h}_\scrI"] \\
\formaldisc \ar[r,"\upmu"] & \mathfrak{h}_\C/W_\scrI.
\ecd
The central fibre of $\Spec\scrR \to \formaldisc$ is the ADE singularity corresponding to $\Delta$, so $\upmu(0) =  0$. On the other hand, since $\scrX$ contains no complete curves outside of $Y$ the map $\upmu$ does not intersect the discriminant locus away from the origin, so $\upmu^{-1}(\scrD_\scrI) = \upmu^{-1}(0) = 0$. 

As in \cite[below Lemma 2.7]{BryanKatzLeung} there exists a one-parameter perturbation of $\upmu$
\[ 
(\upmu_t)_{t\in [0,\upvarepsilon]} \colon \formaldisc \times [0,\upvarepsilon] \to \mathfrak{h}_{\mathbb{C}}/W_{\scrI} 
\]
such that $\upmu_0=\upmu$ and for $t \neq 0$ the following transversality condition is satisfied
\begin{equation}\label{condition transversality of muprime} 
\text{$\upmu_{t}$ intersects $\scrD_{\scrI}\cup \scrE_{\scrI}$ transversely and away from codimension two strata}
 \end{equation}
where $\scrE_{\scrI}$ is defined in Remark~\ref{rem: def of E}. Furthermore, making $\upvarepsilon$ smaller if necessary, we can assume that $\upmu_t^{-1}(\scrD_{\scrI})$ is bounded away from the boundary of $ \formaldisc$.

The spaces $\scrX_{t\neq 0}$ give generic perturbations of the target $\scrX_0=\scrX$. The following is well-known, and will be used to reduce the enumerative geometry of $\scrX_t$, locally, to that of the Atiyah flop.
 
 \begin{lemma} 
For any $t \neq 0$, the total space $\scrX_t$ of the family of surfaces associated to $\upmu_t$ is a smooth $3$-fold. Further, every complete curve in $\scrX_t$ is isolated, smooth, and rational, with normal bundle isomorphic to $\scrO_{\PP^1}(-1)\oplus \scrO_{\PP^1}(-1)$. 
\end{lemma}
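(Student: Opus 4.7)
The strategy is to reduce both claims to a local analysis at the finitely many intersection points of $\upmu_t$ with $\scrD_\scrI \cup \scrE_\scrI$, combining the transversality condition \eqref{condition transversality of muprime} with Theorem~\ref{thm: KM Theorem 1(c)}.

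For smoothness of $\scrX_t$, I would first observe that the total space $\scrY_\scrI$ is smooth away from those strata in $\mathfrak{h}_{\mathbb{C}}/W_{\scrI}$ lying in codimension $\geq 2$ inside $\scrD_\scrI \cup \scrE_\scrI$, since the singularities of $\scrY_\scrI$ arise only where fibrewise $A_1$ singularities collide or interact with contracted root curves. The transversality condition ensures that $\upmu_t$ avoids all such codimension-two strata, so the pullback is smooth wherever $\scrY_\scrI$ is. At a transverse intersection point with a smooth codimension-one component of $\scrE_\scrI$, the miniversal local model of $\scrY_\scrI$ along the transverse direction is the one-parameter smoothing of an $A_1$ singularity, i.e.\ the hypersurface $\{xy-z^2-s=0\}\subset\C^4$, whose total space is smooth. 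Pulling back along a transverse one-dimensional disc therefore yields smoothness of $\scrX_t$ at such points as well.

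For the structure of the complete curves, Theorem~\ref{thm: KM Theorem 1(c)}(1)(2) says that complete curves in a fibre $\mathsf{s}_\scrI^{-1}(p)$ appear exactly when $p \in \scrD_\scrI$, and that a generic point of a single component $\scrD_{\upalpha,\C}/W_\scrI$ carries exactly one complete rational curve of class $\uppi_\scrI(\upalpha)$. Since $\dim \upmu_t = 1$ and each component of $\scrD_\scrI$ has codimension one, transversality gives finitely many intersection points, each lying on a single component. The corresponding fibres of $\scrX_t$ each contain one smooth rational curve $\Curve$, since these are the generic-fibre representatives of the root classes in a partial crepant resolution of an ADE surface. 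Isolation follows because any nontrivial first-order deformation of $\Curve$ inside $\scrX_t$ would extend $\Curve$ to an open neighbourhood of the intersection point in $\upmu_t(\formaldisc)$, contradicting the finiteness of $\upmu_t^{-1}(\scrD_\scrI)$.

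For the normal bundle, let $S \subseteq \scrX_t$ denote the fibre surface containing $\Curve$, and consider the standard sequence
\[
0 \to N_{\Curve/S} \to N_{\Curve/\scrX_t} \to N_{S/\scrX_t}\big|_{\Curve} \to 0.
\]
Here $N_{\Curve/S} \cong \scrO_{\PP^1}(-2)$ because $\Curve$ is a $(-2)$-curve in the partial ADE resolution $S$, and $N_{S/\scrX_t}\big|_{\Curve} \cong \scrO_{\Curve}$ because $S$ is cut out locally by a single equation pulled back from $\formaldisc$. Extensions of $\scrO_{\PP^1}$ by $\scrO_{\PP^1}(-2)$ are classified by $H^1(\PP^1, \scrO(-2)) \cong \C$, and the Kodaira--Spencer class of the deformation $\scrX_t \to \formaldisc$ at $\Curve$ is identified with this extension class. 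The transversality of $\upmu_t$ to $\scrD_{\upalpha,\C}/W_\scrI$ forces this Kodaira--Spencer class to be nonzero, so the extension is non-split, and the unique non-split extension gives $N_{\Curve/\scrX_t} \cong \scrO_{\PP^1}(-1) \oplus \scrO_{\PP^1}(-1)$.

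The main obstacle is the smoothness statement: it requires unpacking the local structure of $\scrY_\scrI$ as a partial resolution of a versal ADE deformation in order to verify that its singularities are confined to codimension-two strata of the discriminant, after which the transversality hypothesis does all the work. The remaining geometric claims are then standard consequences of Theorem~\ref{thm: KM Theorem 1(c)} and the deformation theory of $(-2)$-curves.
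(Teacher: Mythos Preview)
Your outline is essentially correct and in fact supplies the argument that the paper itself omits: the paper's proof is a bare citation of \cite[Proposition~2.2]{BryanKatzLeung} and \cite[Theorem~1]{KatzMorrison}, and what you have written is a faithful unpacking of those results. The identification of the complete curves via Theorem~\ref{thm: KM Theorem 1(c)}, and the normal-bundle computation via the extension of $\scrO_{\PP^1}$ by $\scrO_{\PP^1}(-2)$ together with non-vanishing of the connecting map forced by transversality, are exactly the standard arguments.

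One phrasing issue in the smoothness step deserves tightening. The assertion ``the pullback is smooth wherever $\scrY_\scrI$ is'' is not valid as stated: a fibre product of smooth schemes over a smooth base need not be smooth. What is actually being used is that over the complement of $\scrE_\scrI$ the morphism $\mathsf{s}_\scrI$ has smooth fibres and is therefore a smooth morphism, so any base change is smooth there; while over generic points of $\scrE_\scrI$ the local model $\{xy-z^2-s=0\}$ shows that $\mathsf{s}_\scrI$ fails to be a submersion only in the direction normal to $\scrE_\scrI$, and the transversality of $\upmu_t$ supplies exactly that missing direction. Once rephrased this way the two cases you treat are disjoint and together cover $\upmu_t(\formaldisc)$, and the argument goes through.
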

 \begin{proof} 
 This is essentially \cite[Proposition 2.2]{BryanKatzLeung}, which itself is extracted from the proof of \cite[Theorem 1]{KatzMorrison}. \end{proof}
 
Write $\mathfrak{X}$ for the $4$-dimensional total space of the entire family $(\upmu_t)_{t}$. Then as explained in \cite[Section~3]{Wilson}, pulling back along inclusions of fibres induces isomorphisms
\begin{equation}
\mathrm{H}^2(Y_{\scrI};\mathbb{Z})\xleftarrow{\cong} \mathrm{H}^2(\scrX;\mathbb{Z})\xleftarrow{\cong} \mathrm{H}^2(\mathfrak{X};\mathbb{Z})\xrightarrow{\cong} \mathrm{H}^2(\scrX_t;\mathbb{Z})\label{eqn: lift line bundles}
\end{equation}
for any $t$.  Any class $L$ in $\mathrm{H}^2(\scrX,\mathbb{Z})\cong\Pic(\scrX)$ thus induces an invertible sheaf $\scrL$ on $\mathfrak{X}$ with $\scrL|_{\scrX_0}=L$. Similarly, pushing forward curve classes along the inclusion of fibres induces isomorphisms
 \begin{equation}
 \Achow_1(Y_{\scrI})\xrightarrow{\cong} \Achow_1(\scrX) \xrightarrow{\cong} \Achow_1(\mathfrak{X}) \xleftarrow{\cong} \Achow_1(\scrX_t) \label{eq:ident A three ways}
 \end{equation}
 for any $t$. Given $\upbeta \in \Achow_1(\scrX)$ we abuse notation and let $\upbeta \in \Achow_1(\scrX_t)$ denote the image of $\upbeta$ under the composition of the natural isomorphisms above.  Further, combining \eqref{eq:ident A three ways} and \eqref{eq:ident A with comb} it makes sense to ask when curve classes are restricted roots.
 
 \begin{cor}\label{secret reason why nonzero}\label{lem: properties of Xt}\label{cor: curves on perturbation}
Fix $t \neq 0$ and $\upbeta \in \Achow_1(\scrX)$ non-zero. Then the following statements hold:
\begin{enumerate}
\item If $\upbeta$ is not a restricted positive root, i.e. there does not exist a positive root $\upalpha$ with $\uppi_\scrI(\upalpha)=\upbeta$, then there is no complete curve in $\scrX_t$ of class $\upbeta$.
\item If $\upbeta$ is a restricted positive root, with $\uppi_{\scrI}(\upalpha)=\upbeta$, then the number of complete curves in $\scrX_t$ of class $\upbeta$ is equal to $|\upmu_t^{-1}(\scrD_{\upalpha,\C}/W_\scrI)|$ and is always $\geq 1$.
\end{enumerate}
\end{cor}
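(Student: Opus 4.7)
The plan is to reduce everything to Theorem~\ref{thm: KM Theorem 1(c)}. By construction $\scrX_t$ is the base change of the standard simultaneous partial resolution $\mathsf{s}_\scrI\colon \scrY_\scrI \to \mathfrak{h}_\C/W_\scrI$ along the perturbation $\upmu_t\colon\formaldisc\to \mathfrak{h}_\C/W_\scrI$, so every complete curve in $\scrX_t$ lies in some fibre $\mathsf{s}_\scrI^{-1}(\upmu_t(s))$ with $s \in \formaldisc$, and under the isomorphisms \eqref{eq:ident A three ways} its class is preserved. For part~(1), Theorem~\ref{thm: KM Theorem 1(c)}(3) says that no fibre of $\mathsf{s}_\scrI$ contains a complete curve whose class is not a restricted positive root, so the conclusion is immediate.

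For the equality in part~(2), I would combine Theorem~\ref{thm: KM Theorem 1(c)}(1)--(2) with the transversality condition~\eqref{condition transversality of muprime}. Transversality guarantees that each intersection point of $\upmu_t$ with $\scrD_{\upalpha,\C}/W_\scrI$ is a simple point of $\scrD_\scrI$ that does not lie on any other component and not on $\scrE_\scrI$. Theorem~\ref{thm: KM Theorem 1(c)}(2) then produces a unique complete curve of class $\upbeta = \uppi_\scrI(\upalpha)$ in the corresponding fibre of $\mathsf{s}_\scrI$, and hence in $\scrX_t$. Conversely, Theorem~\ref{thm: KM Theorem 1(c)}(1)--(2) together with Remark~\ref{rmk: proportional but distinct} ensure that every complete curve of class $\upbeta$ arises in this way: components of $\scrD_\scrI$ indexed by restricted roots $\upalpha^\prime$ with $\uppi_\scrI(\upalpha^\prime) \neq \upbeta$ only ever contribute curves of a different class. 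This should give a bijection between complete curves of class $\upbeta$ in $\scrX_t$ and the finite set $\upmu_t^{-1}(\scrD_{\upalpha,\C}/W_\scrI)$.

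For the inequality $|\upmu_t^{-1}(\scrD_{\upalpha,\C}/W_\scrI)|\geq 1$, I would use that $\upmu_0(0)=0$ lies in every component of $\scrD_\scrI$, in particular in $\scrD_{\upalpha,\C}/W_\scrI$. The analytic arc $\upmu_0$ therefore has strictly positive intersection multiplicity with this component at the origin, and since by construction $\upmu_t^{-1}(\scrD_{\upalpha,\C}/W_\scrI)$ stays bounded away from the boundary of $\formaldisc$, this intersection multiplicity is preserved in passing from $\upmu_0$ to $\upmu_t$. After perturbation, transversality converts multiplicity into a count of simple points, forcing the cardinality to be at least one.

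The main obstacle, though largely bookkeeping, will be to verify that intersection multiplicity really is preserved in the continuous family $(\upmu_t)_{t \in [0,\upvarepsilon]}$. This is a topological invariance statement that relies on both parts of the perturbation hypothesis: transversality of $\upmu_t$ with $\scrD_\scrI\cup\scrE_\scrI$ away from codimension-two strata, which makes the counts well-defined, and the fact that $\upmu_t^{-1}(\scrD_\scrI)$ does not escape to the boundary of $\formaldisc$, which keeps the count finite and constant in $t$. Both ingredients are already built into the perturbation chosen following \cite[Lemma 2.7]{BryanKatzLeung}, so no genuinely new technical input should be required beyond citing their construction.
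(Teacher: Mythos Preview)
Your proposal is correct and follows essentially the same route as the paper: both reduce parts (1) and the equality in (2) directly to Theorem~\ref{thm: KM Theorem 1(c)}, and both establish the inequality $\geq 1$ by noting that $\upmu_0(0)=0$ lies in every component $\scrD_{\upalpha,\C}/W_\scrI$ and that this intersection count is preserved under the small perturbation to $\upmu_t$. The paper's proof is terser, simply asserting that ``this quantity does not change under a small perturbation,'' whereas you helpfully spell out why the boundedness and transversality hypotheses on $(\upmu_t)$ are what make this invariance hold.
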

\begin{proof} This follows from Theorem~\ref{thm: KM Theorem 1(c)}, the only new claim being that $|\upmu_t^{-1}(\scrD_{\upalpha,\C}/W_\scrI)| \geq 1$. We observed above that $\upmu(0)=0$. Since $\upmu=\upmu_0$ and $0 \in \scrD_{\upalpha,\C}/W_\scrI$ for every positive root $\upalpha$, it follows that $|\upmu_0^{-1}(\scrD_{\upalpha,\C}/W_\scrI)| \geq 1$ for every positive root $\upalpha$. This quantity does not change under a small perturbation of $\upmu_0$ to $\upmu_t$.\end{proof}

\section{Curve counting and hyperplane arrangements}\label{sec: enumerative invariants}

\noindent The previous section established strong control over generic perturbations of the target geometry $\scrX$. This section exploits this, and describes the qualitative structure of the systems of enumerative invariants attached to $\scrX$. We show that these are intimately related to the combinatorial hyperplane arrangements from Subsection~\ref{sec: hyperplane arrangements}, with the infinite affine arrangement in particular giving the pole locus of the so-called quantum potential.

\subsection{Gopakumar--Vafa}\label{GV section} \label{sec: GV invariants} Curve counting invariants of $\scrX$ will be defined using the perturbed target $\scrX_t$ (for some fixed $t \neq 0$) constructed in Subsection~\ref{sec: perturbing target}. Given a curve class $\upbeta \in \Achow_1(\scrX)$ the associated genus-zero Gopakumar--Vafa (GV) invariant
\[ 
n_\upbeta = n_{\upbeta,\scrX} \in \ZZ_{\geq 0}
\]
is defined as the number of complete curves in $\scrX_t$ of class $\upbeta$. By Corollary~\ref{lem: properties of Xt} this is zero if $\upbeta$ is not a restricted positive root, and otherwise is equal to the number of intersection points of $\upmu_t$ with the appropriate component of the discriminant locus, i.e.
\begin{equation} \label{eqn: nbeta equals intersection with hyperplane} n_{\upbeta} = |\upmu_t^{-1}(\scrD_{\upalpha,\C}/W_\scrI)| \end{equation}
where $\upalpha$ is any positive root with $\uppi_{\scrI}(\upalpha)=\upbeta$. This number is independent of the choice of small perturbation $\upmu_t$.

In what follows, for a curve class $\upbeta$ consider the dual hyperplane $\Hyp_\upbeta\subseteq \Uptheta_\scrI$.
\begin{corollary} \label{cor: GV nonzero}
If $\upbeta \in \Achow_1(\scrX)$ then $n_\upbeta$ is non-zero if and only if $\upbeta$ is a restricted positive root, equivalently if and only if $\Hyp_\upbeta$ belongs to the enhanced finite arrangement $\scrH_\scrI$.
\end{corollary}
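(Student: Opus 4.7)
The plan is to deduce this corollary essentially directly from Corollary~\ref{lem: properties of Xt}, which was proved just above. The key observation is that the GV invariant $n_\upbeta$ is defined, via equation~\eqref{eqn: nbeta equals intersection with hyperplane}, as the number of complete curves of class $\upbeta$ in the perturbed target $\scrX_t$, and Corollary~\ref{lem: properties of Xt} already splits into precisely the two cases needed for the equivalence.

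First I would handle the forward direction: suppose $\upbeta$ is \emph{not} a restricted positive root. Then by part~(1) of Corollary~\ref{lem: properties of Xt} there is no complete curve in $\scrX_t$ of class $\upbeta$, so $n_\upbeta = 0$. Contrapositively, $n_\upbeta \neq 0$ forces $\upbeta$ to be a restricted positive root. For the converse, suppose $\upbeta$ is a restricted positive root and pick any positive root $\upalpha$ with $\uppi_\scrI(\upalpha)=\upbeta$. By part~(2) of Corollary~\ref{lem: properties of Xt}, together with the inequality $|\upmu_t^{-1}(\scrD_{\upalpha,\C}/W_\scrI)| \geq 1$ established there using that $\upmu(0)=0 \in \scrD_{\upalpha,\C}/W_\scrI$, the number of complete curves of class $\upbeta$ in $\scrX_t$ is at least one. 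Hence $n_\upbeta \geq 1$, in particular non-zero.

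Finally, the equivalence with the condition that $\Hyp_\upbeta$ belongs to the enhanced finite arrangement $\scrH_\scrI$ is purely a matter of definitions: by \eqref{eqn: finite arrangement}, the arrangement $\scrH_\scrI$ consists precisely of the hyperplanes $\Hyp_\upbeta$ as $\upbeta$ ranges over restricted positive roots. There is no obstacle to speak of; all the substantive work has been done in setting up the relationship between $\upmu_t$ and the discriminant locus in Theorem~\ref{thm: KM Theorem 1(c)} and Corollary~\ref{lem: properties of Xt}, and this statement is simply the clean enumerative repackaging of that geometry.
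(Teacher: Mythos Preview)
Your proposal is correct and follows essentially the same approach as the paper: the paper's proof simply states that the result follows immediately from Corollary~\ref{cor: curves on perturbation} (which is the same result you cite as Corollary~\ref{lem: properties of Xt}) together with the definition of the enhanced finite arrangement. Your write-up just unpacks this a bit more explicitly, spelling out how parts (1) and (2) give the two directions and how the definition \eqref{eqn: finite arrangement} handles the final equivalence.
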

\begin{proof} 
This follows immediately from Corollary~\ref{cor: curves on perturbation}, together with the definition of enhanced finite arrangement in Subsection~\ref{sec: hyperplane arrangements}.
\end{proof}
Note that $\Hyp_\upbeta$ and $\Hyp_{2\upbeta}$ should be considered as different hyperplanes in the enhanced finite arrangement. See also the discussion in Subsection~\ref{sec: hyperplane arrangements}, and Remark~\ref{rmk: proportional but distinct}.

\begin{remark} It follows from Corollary~\ref{cor: GV nonzero} that there are only finitely many non-zero GV invariants. There is already a known range outside of which the GV invariants are guaranteed to vanish. Indeed, every simple root $i \in \Delta$ has an associated length $\updelta_i$, given by the coefficient of $\upalpha_i$ in the maximal root, and writing $\upbeta = \Sigma_i m_i \Curve_i$ it is known that $n_{\upbeta} = 0$ unless $m_i \leq \updelta_i$ for all $i$. However, this bound is far from sharp, while Corollary~\ref{cor: GV nonzero} provides a precise characterisation.
\end{remark}

\subsection{Gromov--Witten} \label{sec: GW invariants}  We refer to \cite[Section~7]{CoxKatz} for an introduction to Gromov--Witten theory. For every non-zero curve class $\upbeta \in \Achow_1(\scrX)$ there is an associated genus-zero Gromov--Witten (GW) invariant
\[
 N_\upbeta=N_{\upbeta,\scrX} \in \QQ
 \]
defined as the virtual degree of the corresponding moduli space of stable maps to $\scrX$. By deformation invariance this coincides with the virtual degree of the moduli space of stable maps to $\scrX_t$ for $t \neq 0$, as constructed in Subsection~\ref{sec: deforming to Xt}.  

The latter space decomposes as a disjoint union of spaces of stable maps to $\PP^1$, and applying the Aspinwall--Morrison multiple cover formula \cite{AspinwallMorrison,VoisinAspinwallMorrison} for the local invariants of $\scrO_{\PP^1}(-1)\oplus \scrO_{\PP^1}(-1)$ gives the following relationship between the GW invariants and the GV invariants from Subsection~\ref{sec: GV invariants}, namely 
\begin{equation} 
N_\upbeta = \sum_{d | \upbeta} \dfrac{n_{\upbeta/d}}{d^3}\label{eqn: multiple cover formula}.
\end{equation}
More generally,  given $k \geq 0$ and homogeneous classes $\upgamma_1,\ldots,\upgamma_k \in \mathrm{H}^\star(\scrX;\C)$, the associated GW invariant with cohomological insertions at marked points is defined to be
\[
\langle \upgamma_1,\ldots,\upgamma_k \rangle^{\scrX}_{0,k,\upbeta} \colonequals  \int_{[\ol{\mathcal{M}}_{0,k}(\scrX,\upbeta)]^{\operatorname{virt}}} \prod_{i=1}^k \ev_i^\star \upgamma_i 
\]
and provides a virtual count of rational curves in $\scrX$ of class $\upbeta$ passing through the cycles $\upgamma_1,\ldots,\upgamma_k$. Note that in particular $N_{\upbeta} = \langle \rangle^{\scrX}_{0,0,\upbeta}$. Since $\scrX$ is a Calabi--Yau $3$-fold, the invariant vanishes unless the input data satisfies the dimension constraint
\begin{equation}\label{eqn: dimension constraint} \sum_{i=1}^k \deg \upgamma_i = 2k.\end{equation}
The cohomology of $\scrX$ is well-understood, see e.g.\ \cite[5.2]{CaibarThesisPaper}. In particular
\begin{equation*} 
\mathrm{H}^0(\scrX; \C) = \C \cdot \mathds{1},\quad \mathrm{H}^1(\scrX; \C)=0,\quad \mathrm{H}^2(\scrX; \C) = \Pic \scrX \otimes \C.
\end{equation*}
Moreover, as we work in the complete local setting, by e.g.\ \cite[3.4.4]{VdB1d} $\Pic\scrX$ is dual to the group $\Achow_1(\scrX)$ of curve classes, since there is a basis of divisor classes $\Pic\scrX = \langle D_i \mid i \in \scrIc \rangle_{\Z}$ which satisfies $D_i \cdot \Curve_j = \updelta_{ij}$.

Given a GW invariant $\langle \upgamma_1,\ldots,\upgamma_k \rangle^{\scrX}_{0,k,\upbeta}$, if any $\upgamma_i=\mathds{1}$ then the invariant vanishes by the string equation. It follows from \eqref{eqn: dimension constraint} that the invariant vanishes unless each $\upgamma_i \in \mathrm{H}^2(\scrX;\C)$. But then the $\upgamma_i$ are divisors, and the $k$-pointed invariants with divisorial insertions are related to the $0$-pointed invariants by the divisor equation
\[ 
\langle D_{j_1}, \ldots, D_{j_k} \rangle^{\scrX}_{0,k,\upbeta} = \left( \prod_{i=1}^k D_{j_i}\cdot \upbeta \right)  N_{\upbeta}.
\]
In this way, the non-zero GW invariants are controlled entirely by the $N_{\upbeta}$, which by \eqref{eqn: multiple cover formula} are controlled entirely by the GV invariants $n_{\upbeta}$. The latter constitutes a finite list of numbers.

\subsection{Quantum cohomology} \label{sec: quantum potential} As is well known, the GW invariants form the structure constants for quantum cohomology. The information defining quantum cohomology is equivalent to the \emph{quantum potential}, defined in our setting as
\begin{equation} \label{eqn: quantum potential} 
\Phi^{\scrX}_{\mathsf{t}}( \upgamma_1,\upgamma_2,\upgamma_3) \colonequals \sum_{\substack{\upbeta \in \Achow_1(\scrX)\\ \upbeta \neq 0}} \,\sum_{k \geq 0} \,\,\dfrac{1}{k!} \langle \upgamma_1,\upgamma_2,\upgamma_3, \mathsf{t} , \ldots, \mathsf{t} \rangle^{\scrX}_{0,k+3,\upbeta}. \end{equation}
Here we exclude the case $\upbeta = 0$ from the sum, since for non-compact $\scrX$ such invariants are not defined (see Remark~\ref{rmk: no algebra} below). We view \eqref{eqn: quantum potential}  as a family of multilinear maps
\[
\Phi^{\scrX}_\mathsf{t} \colon \mathrm{H}^\star(\scrX;\C)^{\otimes 3} \to \C	
\]
parametrised by the formal variable $\mathsf{t} \in \mathrm{H}^\star(\scrX;\C)$. By the earlier dimension arguments, we see that the quantum potential only depends on the component of $\mathsf{t}$ in the $\mathrm{H}^2(\scrX;\C)$ direction. Thus we may assume $\mathsf{t} \in \mathrm{H}^2(\scrX;\C)$ and write
\[
\mathsf{t} = (\mathsf{t}_i)_{i \in \scrIc} = \sum_{i \in \scrIc} \mathsf{t}_i D_i.
\]
The parameter space for the quantum potential is thus co-ordinatised by the $\mathsf{t}_i$. An alternative co-ordinate system is given by the Novikov parameters, defined by
\[ 
\mathsf{q}_i \colonequals \exp(\mathsf{t}_i).
\]
The following result resembles expressions appearing in earlier work \cite{MorrisonKaehler, Wilson}, but is more explicit, being given in terms of canonical bases for $\mathrm{H}^2(\scrX;\C)$ and $\Achow_1(\scrX)$.  This refined information will allow us in Corollary~\ref{cor: quantum is hyper} to pinpoint the non-vanishing terms using Dynkin combinatorics, and in Corollary~\ref{thm: CTC} to track the change in quantum potential under iterated flops.

\begin{theorem} \label{thm: structure of quantum potential} The quantum potential has a natural analytic continuation over the parameter space, given as a finite sum of terms indexed by the non-vanishing GV invariants
\begin{equation} \label{eqn: rational expression for quantum potential} \Phi^{\scrX}_\mathsf{t}(\upgamma_1,\upgamma_2,\upgamma_3) = \sum_{\upbeta=(m_i)} n_\upbeta (\upgamma_1 \cdot \upbeta) (\upgamma_2 \cdot \upbeta) (\upgamma_3 \cdot \upbeta) \dfrac{\prod_{i \in \scrIc} \mathsf{q}_i^{m_i}}{1-\prod_{i \in \scrIc} \mathsf{q}_i^{m_i}}. \end{equation}
\end{theorem}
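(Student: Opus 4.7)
The approach is a direct manipulation of the defining series \eqref{eqn: quantum potential}, using only standard properties of Gromov--Witten invariants already noted in Subsection~\ref{sec: GW invariants} (string equation, divisor equation, dimension constraint), the Aspinwall--Morrison formula \eqref{eqn: multiple cover formula}, and the crucial finiteness statement of Corollary~\ref{cor: GV nonzero}. These ingredients suffice to rewrite an infinite formal series as a finite sum of rational functions in the Novikov parameters, which is precisely what is needed to give the analytic continuation.

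First, by the observations of Subsection~\ref{sec: GW invariants}, the string equation together with the dimension constraint \eqref{eqn: dimension constraint} force each $\upgamma_j$ to lie in $\mathrm{H}^2(\scrX;\C)$, and only the $\mathrm{H}^2$-component of $\mathsf{t}$ contributes. Writing $\upbeta = \sum_{i\in\scrIc} m_i\Curve_i$ and applying the divisor equation $k$ times to the insertions $\mathsf{t}$ gives
\[
\langle \upgamma_1,\upgamma_2,\upgamma_3,\mathsf{t},\ldots,\mathsf{t}\rangle^{\scrX}_{0,k+3,\upbeta} = (\mathsf{t}\cdot\upbeta)^k \,\langle \upgamma_1,\upgamma_2,\upgamma_3\rangle^{\scrX}_{0,3,\upbeta}.
\]
Summing $k\geq 0$ against $1/k!$ produces the factor $\exp(\mathsf{t}\cdot\upbeta) = \prod_i \mathsf{q}_i^{m_i}$. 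Applying the divisor equation in reverse, stripping off the three remaining divisors, identifies the three-pointed invariant as $(\upgamma_1\cdot\upbeta)(\upgamma_2\cdot\upbeta)(\upgamma_3\cdot\upbeta)\,N_\upbeta$.

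Next, insert the multiple cover formula $N_\upbeta = \sum_{d\mid\upbeta} n_{\upbeta/d}/d^3$ and reindex by the substitution $\upbeta = d\upbeta^\prime$. The factors $(\upgamma_j\cdot d\upbeta^\prime) = d(\upgamma_j\cdot\upbeta^\prime)$ generate a $d^3$ that cancels against $1/d^3$, leaving
\[
\Phi^{\scrX}_\mathsf{t}(\upgamma_1,\upgamma_2,\upgamma_3) = \sum_{\upbeta^\prime\neq 0} n_{\upbeta^\prime}(\upgamma_1\cdot\upbeta^\prime)(\upgamma_2\cdot\upbeta^\prime)(\upgamma_3\cdot\upbeta^\prime) \sum_{d\geq 1}\prod_{i\in\scrIc} \mathsf{q}_i^{d m_i^\prime}.
\]
The inner geometric series sums to $\prod_i \mathsf{q}_i^{m_i^\prime}/(1-\prod_i \mathsf{q}_i^{m_i^\prime})$, which is precisely the rational function appearing in \eqref{eqn: rational expression for quantum potential}. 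The outer sum is finite by Corollary~\ref{cor: GV nonzero}, since only finitely many $n_{\upbeta^\prime}$ are non-zero.

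The main subtlety is not the algebraic manipulation but the legitimacy of the analytic continuation claim: \emph{a priori} the right-hand side of \eqref{eqn: quantum potential} is only a formal power series in the Novikov parameters, and one must check that the reorderings above convert it into a genuine meromorphic function. This is where Corollary~\ref{cor: GV nonzero} does the real work: once the outer sum is known to be finite, each inner geometric series converges in a small polydisc, and the manipulations above are legitimate identities of convergent power series there. The resulting finite sum of rational functions then provides the unique meromorphic extension to the full parameter space. No further convergence or regularity issue arises.
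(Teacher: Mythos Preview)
Your proof is correct and follows essentially the same route as the paper: apply the divisor equation to collapse the $k$-sum to an exponential, use the multiple cover formula \eqref{eqn: multiple cover formula} and reindex to expose a geometric series, then invoke Corollary~\ref{cor: GV nonzero} for finiteness. Your final paragraph on the legitimacy of the analytic continuation is slightly more explicit than the paper's, but the argument is the same.
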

\noindent The sum is over non-zero curve classes
\[\upbeta=(m_i)_{i \in \scrIc} = \sum m_i \Curve_i \in \Achow_1(\scrX).\]
Each term is a cubic polynomial in the input variables, multiplied by a specific rational function in the Novikov parameters, and weighted by the GV invariant $n_{\upbeta}$. We will also use the term `quantum potential' to refer to this analytic continuation.

\begin{proof} Write the formal parameter $\mathsf{t}$ and the curve class $\upbeta$ as sums
\[
\mathsf{t} = \sum_{i \in \scrIc} \mathsf{t}_i D_i, \qquad
\upbeta = \sum_{i \in \scrIc} m_i \Curve_i.
\]
Applying the divisor equation together with the multiple cover formula \eqref{eqn: multiple cover formula} then gives
\begin{align*}
\Phi^{\scrX}_{\mathsf{t}}(\upgamma_1,\upgamma_2,\upgamma_3) & = \sum_{\upbeta} \,\sum_{k \geq 0} \,\,\dfrac{1}{k!} \langle \upgamma_1,\upgamma_2,\upgamma_3, \mathsf{t} , \ldots, \mathsf{t} \rangle^{\scrX}_{0,k+3,\upbeta}\\
& = \sum_{\upbeta} \langle \upgamma_1,\upgamma_2,\upgamma_3 \rangle^{\scrX}_{0,3,\upbeta} \,\big(\textstyle\sum_{k\geq 0} \tfrac{(\mathsf{t} \cdot \upbeta)^k}{k!} \big)\\
& = \sum_{\upbeta} N_\upbeta  (\upgamma_1 \cdot \upbeta) (\upgamma_2 \cdot \upbeta) (\upgamma_3 \cdot \upbeta) \exp\!\big( \textstyle\sum_{i \in \scrIc} m_i \mathsf{t}_i\big) \\
& = \sum_{\upbeta} n_{\upbeta} \sum_{d \geq 1} \dfrac{1}{d^3} (\upgamma_1 \cdot d\upbeta)(\upgamma_2 \cdot d\upbeta)(\upgamma_3 \cdot d\upbeta)  \exp\!\big( \textstyle\sum_{i \in \scrIc} dm_i \mathsf{t}_i\big) \\
& = \sum_{\upbeta} n_\upbeta  (\upgamma_1 \cdot \upbeta) (\upgamma_2 \cdot \upbeta) (\upgamma_3 \cdot \upbeta) \sum_{d \geq 1} \exp\!\big( \textstyle\sum_{i \in \scrIc} m_i \mathsf{t}_i\big)^d \\
& = \sum_{\upbeta} n_\upbeta (\upgamma_1 \cdot \upbeta) (\upgamma_2 \cdot \upbeta) (\upgamma_3 \cdot \upbeta) \sum_{d \geq 1} \big( \Pi_{i \in \scrIc} \mathsf{q}_i^{m_i}\big)^d.
\end{align*}
Note that this sum is finite, since $n_\upbeta=0$ for all but finitely many $\upbeta$ (Corollary~\ref{cor: GV nonzero}). For fixed inputs $(\upgamma_1,\upgamma_2,\upgamma_3)$, the above is a formal power series in the Novikov parameters. It is the Taylor series for the following rational function, expanded about the point $(\mathsf{q}_i)_i = (0,\ldots,0)$, equivalently $(\mathsf{t}_i)_i = (-\infty, \ldots, -\infty)$, 
\begin{align*} 
\Phi^{\scrX}_\mathsf{t}(\upgamma_1,\upgamma_2,\upgamma_3) = \sum_\upbeta n_\upbeta (\upgamma_1 \cdot \upbeta) (\upgamma_2 \cdot \upbeta) (\upgamma_3 \cdot \upbeta) \dfrac{\prod_{i \in \scrIc} \mathsf{q}_i^{m_i}}{1-\prod_{i \in \scrIc} \mathsf{q}_i^{m_i}}.
\end{align*}
This expression provides a natural analytic continuation of the quantum potential beyond the radius of convergence $\{ |\mathsf{q}_i| < 1 \mid i \in \scrIc \}$.
\end{proof}

Recall that after combining \eqref{eq:ident A three ways} with \eqref{eq:ident A with comb} we can ask which curve classes are restricted roots.  
\begin{corollary}\label{cor: quantum is hyper}
Under the uniformly rescaled co-ordinates $\mathsf{p}_i \colonequals \mathsf{t}_i/2\uppi\sqrt{-1}$ on $\mathrm{H}^2(\scrX;\C)$, the pole locus of the quantum potential is given by
\[
\bigcup_{\upbeta=(m_i)}\big\{\textstyle\sum_{i \in \scrIc} m_i \mathsf{p}_i \in \Z \big\}
\]
where the union is over all restricted positive roots $\upbeta$. This is precisely the complexification of $\scrH^{\aff}_{\scrI}$ under the natural identification $\mathrm{H}^2(\scrX;\R)\cong\Uptheta_{\scrI}$ dual to \eqref{eq:ident A with comb}.
\end{corollary}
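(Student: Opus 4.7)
The plan is to read off everything from the closed-form expression in Theorem~\ref{thm: structure of quantum potential} and then interpret the result on the $\mathsf{p}_i$ side via $\mathsf{q}_i = \exp(2\uppi\sqrt{-1}\,\mathsf{p}_i)$.

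First I would apply Theorem~\ref{thm: structure of quantum potential} to write
\[
\Phi^{\scrX}_{\mathsf{t}}(\upgamma_1,\upgamma_2,\upgamma_3) = \sum_{\upbeta=(m_i)} n_\upbeta \,(\upgamma_1 \cdot \upbeta)(\upgamma_2\cdot\upbeta)(\upgamma_3\cdot\upbeta)\cdot \frac{\prod_{i\in\scrIc} \mathsf{q}_i^{m_i}}{1-\prod_{i\in\scrIc} \mathsf{q}_i^{m_i}}
\]
as a \emph{finite} sum over the curve classes $\upbeta$ with $n_\upbeta \neq 0$. By Corollary~\ref{cor: GV nonzero} this indexing set is precisely the set of restricted positive roots. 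Each individual summand is regular except where its denominator vanishes, namely on $\{\prod \mathsf{q}_i^{m_i}=1\}$.

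Second, I would rewrite this locus in the rescaled coordinates. Since $\mathsf{q}_i = \exp(\mathsf{t}_i)=\exp(2\uppi\sqrt{-1}\,\mathsf{p}_i)$, the equation $\prod\mathsf{q}_i^{m_i}=1$ becomes $\sum_{i\in\scrIc} m_i\mathsf{p}_i \in \ZZ$. Taking the union over $\upbeta$ non-zero with $n_\upbeta\neq 0$ therefore gives the candidate pole locus $\bigcup_{\upbeta}\{\sum m_i\mathsf{p}_i\in\ZZ\}$, which, by the definitions \eqref{eqn: finite arrangement}--\eqref{eqn: infinite arrangement} of Subsection~\ref{sec: hyperplane arrangements}, is exactly the complexification of $\scrH_\scrI^{\aff}$ under the dual identification $\mathrm{H}^2(\scrX;\RR)\cong\Uptheta_\scrI$ of \eqref{eq:ident A with comb}. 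Note that proportional restricted roots $\upbeta$ and $k\upbeta$ contribute nested but strictly different families of affine hyperplanes (namely $\sum m_i \mathsf{p}_i\in\ZZ$ and $\sum m_i\mathsf{p}_i\in\tfrac{1}{k}\ZZ$); this is precisely the enhanced-multiplicity data that makes $\scrH_\scrI^\aff$ strictly finer than the obvious integer translates of $\scrH_\scrI$.

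Third, I would check that no cancellations shrink the pole locus. Fix a restricted positive root $\upbeta_0$ and pick a point $\mathsf{p}^{\circ}$ on the affine hyperplane $\sum m_i^{(0)}\mathsf{p}_i=0$ which lies on no other component of the union. This is possible because the other components cut this hyperplane in a codimension-one subvariety of it, so generic points avoid them. At such a point only the $\upbeta_0$-term contributes a pole; the residue of that term is a non-zero multiple of $n_{\upbeta_0}(\upgamma_1\cdot\upbeta_0)(\upgamma_2\cdot\upbeta_0)(\upgamma_3\cdot\upbeta_0)$, which is non-zero for suitably chosen $\upgamma_j$ since $n_{\upbeta_0}\neq 0$ and $\upbeta_0\neq 0$. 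Hence every affine hyperplane in the proposed union genuinely occurs in the pole locus, and conversely every pole of $\Phi^\scrX_\mathsf{t}$ lies in the union by the explicit expression above. Combining the two inclusions gives the claim.

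The only subtle step is the non-cancellation argument; the rest is bookkeeping. The subtlety is mild, however, because the terms in $\Phi^\scrX_\mathsf{t}$ have well-separated pole divisors and one can isolate each pole by evaluating at a generic smooth point of the corresponding affine hyperplane, so I do not anticipate any real obstacle.
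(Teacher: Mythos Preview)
Your proof is correct and follows essentially the same approach as the paper: read off the denominators from Theorem~\ref{thm: structure of quantum potential}, convert $\prod \mathsf{q}_i^{m_i}=1$ into $\sum m_i\mathsf{p}_i\in\ZZ$, invoke Corollary~\ref{cor: GV nonzero} to identify the indexing set with the restricted positive roots, and match with the definition of $\scrH^{\aff}_\scrI$. Your third step, verifying that no cancellations among the finitely many summands can remove a pole, is a point the paper passes over silently; your argument via a generic point on a single affine hyperplane is a clean way to handle it and makes the proof more complete than the paper's own version.
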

\begin{proof} 
The pole locus of \eqref{eqn: rational expression for quantum potential} is the union of loci in the parameter space given by
\[ 
\prod_{i \in \scrIc} \mathsf{q}_i^{m_i} = 1 \
\Leftrightarrow \ \sum_{i \in \scrIc} m_i \mathsf{t}_i \in 2 \uppi \sqrt{-1} \cdot \Z \
\Leftrightarrow \ \sum_{i \in \scrIc} m_i \mathsf{p}_i \in \Z
\]
where $\upbeta=(m_i)_{i \in \scrIc}=\sum m_i \Curve_i$ is such that $n_\upbeta$ is non-zero. The first statement then follows from Corollary~\ref{cor: GV nonzero}, since $n_\upbeta$ is non-zero if and only if $\upbeta$ is a restricted positive root.  The second is an immediate consequence of the definition of $\scrH^{\aff}_{\scrI}$ in \eqref{eqn: infinite arrangement}.
\end{proof}

\begin{example}
For a single-curve flop with $\scrI=\Eseven{B}{B}{P}{B}{B}{B}{B}$, the complexification of  $\scrH^{\aff}_{\scrI}$ is 
\def\HypScalex{4}
\[
\begin{tikzpicture}[scale=0.9]
\draw[densely dotted,->] ($(-\HypScalex,0)+(-0.66,0)$) -- ($(\HypScalex,0)+(0.66,0)$);
\node at ($(\HypScalex,0)+(1,0)$) {$\scriptstyle\mathbb{R}$};
\draw[densely dotted,->] (0,-1) -- (0,1);
\node at (0,1.2) {$\scriptstyle\mathrm{i}\mathbb{R}$};

{\foreach \i in {-2,-1,0,1,2}
\filldraw[fill=white,draw=black] (\HypScalex*1/2*\i,0) circle (2pt);
}
{\foreach \i in {-2,-1,1,2}
\filldraw[fill=white,draw=black] (\HypScalex*1/3*\i,0) circle (2pt);
}
{\foreach \i in {-3,-2,-1,1,2,3}
\filldraw[fill=white,draw=black] (\HypScalex*1/4*\i,0) circle (2pt);
}

\node at (-\HypScalex,-0.25) {$\scriptstyle -1$};

\node at (0,-0.25) {$\scriptstyle 0$};

\node at (\HypScalex*0.25,-0.3) {$\scriptstyle \frac{1}{4}$};
\node at (\HypScalex*1/3,-0.3) {$\scriptstyle \frac{1}{3}$};
\node at (\HypScalex*1/2,-0.3) {$\scriptstyle \frac{1}{2}$};
\node at (\HypScalex*2/3,-0.3) {$\scriptstyle \frac{2}{3}$};
\node at (\HypScalex*3/4,-0.3) {$\scriptstyle \frac{3}{4}$};
\node at (\HypScalex,-0.25) {$\scriptstyle 1$};
\end{tikzpicture}
\]
extended to infinity in both directions. The non-zero GV invariants are $n_{k\Curve}$ for $1 \leq k \leq 4$.
\end{example}

\begin{example}
In the running Example~\ref{example: intro}, namely a two-curve flop with $\scrI=\Eeight{B}{P}{B}{B}{B}{B}{B}{P}$, the complexification of  $\scrH^{\aff}_{\scrI}$ is 
the complexification of the real arrangement in \eqref{running example infinite}.
\end{example}

\begin{remark} \label{rmk: no algebra} The definition of the quantum cohomology algebra requires a perfect pairing on cohomology in order to raise indices, but since here $\scrX$ is non-compact, such a pairing does not exist. This technical issue is often circumvented by localising to a torus-fixed locus which is compact, see e.g.\ \cite{BryanGholampour,CoatesIritaniJiang}.  Since our geometries do not always carry a suitable torus action, instead we simply equate `quantum cohomology' with the data of the quantum potential \eqref{eqn: quantum potential}, as this is consistent with other approaches \cite{LiRuan}. In cases where a natural quantum cohomology algebra can be defined, our results apply equally well to that algebra. The only modification required is to reinstate the $\upbeta=0$ terms in the quantum potential, which encode the given perfect pairing. \end{remark}

\section{Flops via simultaneous partial resolutions}\label{sec: flops via sim res}

\noindent This section constructs flops, and describes how their dual graph changes, via simultaneous partial resolutions, completing work of Pinkham \cite{Pinkham}. As a consequence we obtain an explicit change-of-basis matrix, which in Subsection~\ref{sec: GV under flop} is used to track the change of GV invariants under iterated flop.

The construction requires some more Dynkin notation, so for a subset $\scrI\subseteq\Delta$, $j\in \scrI$ and $i\in \scrIc$ write
\[
\scrI+i= \scrI\cup \{i\}\quad\mbox{and}\quad
\scrI-j= \scrI\setminus \{j\}.
\]
Further, to every Dynkin diagram $\Gamma$ is an associated \emph{Dynkin involution}, which we will denote $\upiota_\Gamma$.  For Type $A_n$ and $E_6$ this is the obvious reflection, for $E_7$ and $E_8$ it is trivial, and for $D_n$ the behaviour depends on the parity of $n$, see e.g.\ \cite[(1.2.B)]{IyamaWemyssTits}.  If $\Gamma$ is a disjoint union of Dynkin diagrams, then $\upiota_\Gamma$ by definition acts separately on each component.  Further, if $\Delta$ is ADE, and $\Gamma$ is a subset of $\Delta$, then automatically the subgraph $\Gamma$ is a disjoint union of ADE diagrams, and so there is an associated $\upiota_\Gamma$.

\begin{notation}[\hspace{1sp}{\cite[1.16]{IyamaWemyssTits}}]
For $i\in\scrIc$, the wall crossing $\upomega_i(\scrI)$ is defined by the rule
\[
\upomega_i(\scrI) \colonequals \scrI+i-\upiota_{\scrI+i}(i) \subseteq \Delta.
\]
\end{notation}

\begin{example}\label{ex: changing dual graph}
Consider the running Example~\ref{example: intro}, namely $\scrI=\Eeight{B}{P}{B}{B}{B}{B}{B}{P}$, where by convention $\scrI$ equals the six black dots.  There are two choices for $i\in \scrIc$, namely the two pink nodes.  Let $i$ be the rightmost.  Then $\scrI+i$ equals the black dots in the following.
\[
\scrI+i=
\begin{array}{c}
\begin{tikzpicture}[scale=0.21]
\node at (0,0) [B] {};
\node at (1,0) [P] {};
\node at (2,0) [B] {};
\node at (2,1) [B] {};
\node at (3,0) [B] {};
\node at (4,0) [B] {};
\node at (5,0) [B] {};
\node at (6,0) [B] {};
\draw[densely dotted] (1.5,1.5)--(2.5,1.5) -- (2.5,0.5)--(6.5,0.5)--(6.5,-0.5)--(1.5,-0.5)--cycle;
\draw[densely dotted] (-0.5,0.5)--(0.5,0.5) --(0.5,-0.5)--(-0.5,-0.5)--cycle;
\draw[<->,bend right] (6,0.5) to (2.5,1.5);
\end{tikzpicture}
\end{array}
\]
The black dots form $A_1\times A_6$, and so applying the Dynkin involution $\upiota_{\scrI+i}$ illustrated, we see that $\upiota_{\scrI+i}(i)$ is the top node.  Thus, for this choice of $i$, 
\[
\upomega_i(\scrI)=\scrI+i-\upiota_{\scrI+i}(i)=\Eeight{B}{P}{B}{P}{B}{B}{B}{B}.
\]
\end{example}

Consider now the fixed flopping contraction $\scrX\to\Spec\scrR$ which slices under \eqref{elephant pullback} to give $Y_{\scrI} \to \C^2/G$.  Pick a flopping curve $\Curve_i$ in $\scrX$. This corresponds to a choice of $i\in\scrIc$, so we can form $\upomega_i(\scrI)$. In what follows consider $\ell_\scrI\ell_{\scrI+i}$, where $\ell_\scrI$ and $\ell_{\scrI+i}$ are the longest elements in the parabolic subgroups $W_\scrI$ and $W_{\scrI+i}$ respectively.  

\begin{lemma}\label{lem: induced iso A}
The left action by $\ell_\scrI\ell_{\scrI+i}$ induces an isomorphism $\mathfrak{h}_{\mathbb{C}}/W_{\upomega_i(\scrI)}\to \mathfrak{h}_{\mathbb{C}}/W_{\scrI}$.
\end{lemma}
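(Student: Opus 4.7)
The plan is to show that conjugation by $w = \ell_\scrI\ell_{\scrI+i}$ carries $W_{\upomega_i(\scrI)}$ onto $W_\scrI$, so that the linear action of $w$ on $\mathfrak{h}_\C$ descends to a well-defined (necessarily invertible) map $\mathfrak{h}_\C/W_{\upomega_i(\scrI)} \to \mathfrak{h}_\C/W_\scrI$.

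The key ingredient is the following standard Coxeter fact: for any subset $\Gamma \subseteq \Delta$, the longest element $\ell_\Gamma \in W_\Gamma$ acts on simple roots by
\[
\ell_\Gamma(\upalpha_k) = -\upalpha_{\upiota_\Gamma(k)} \quad \text{for all } k \in \Gamma.
\]
First I would record this identity in both cases $\Gamma = \scrI+i$ and $\Gamma = \scrI$, and the elementary combinatorial fact that if $j = \upiota_{\scrI+i}(i)$, then $\upiota_{\scrI+i}$ restricts to a bijection
\[
\upomega_i(\scrI) = (\scrI+i)-j \xrightarrow{\ \sim\ } (\scrI+i)-i = \scrI,
\]
since $\upiota_{\scrI+i}$ is an involution swapping $i$ and $j$.

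Next, I would pick $k \in \upomega_i(\scrI)$ and compute directly
\[
w(\upalpha_k) = \ell_\scrI\bigl(-\upalpha_{\upiota_{\scrI+i}(k)}\bigr) = \upalpha_{\upiota_\scrI \upiota_{\scrI+i}(k)},
\]
where the second equality uses that $\upiota_{\scrI+i}(k) \in \scrI$ by the combinatorial observation above, so that $\ell_\scrI$ acts on $\upalpha_{\upiota_{\scrI+i}(k)}$ by the displayed formula. Thus $w$ sends the simple roots of $\upomega_i(\scrI)$ bijectively to the simple roots of $\scrI$, via the bijection $k \mapsto \upiota_\scrI\upiota_{\scrI+i}(k)$. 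Conjugating the corresponding reflections then gives $w\, s_k\, w^{-1} = s_{\upiota_\scrI\upiota_{\scrI+i}(k)}$, and hence $w\, W_{\upomega_i(\scrI)}\, w^{-1} = W_\scrI$.

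Finally, this intertwining identity means that the $\mathbb{C}$-linear automorphism of $\mathfrak{h}_\C$ induced by $w$ descends to a well-defined map $\mathfrak{h}_\C/W_{\upomega_i(\scrI)} \to \mathfrak{h}_\C/W_\scrI$, which is an isomorphism with inverse induced by $w^{-1}$. The only subtle point, and the one I would be most careful with, is the verification that $\upiota_{\scrI+i}(k) \in \scrI$ for all $k \in \upomega_i(\scrI)$: this is immediate from $\upiota_{\scrI+i}$ being an involution with $\upiota_{\scrI+i}(i)=j$, but it is exactly the step where the non-trivial definition of $\upomega_i(\scrI)$ enters and justifies why this particular subset, and no other, is the one that works.
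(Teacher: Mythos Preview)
Your proof is correct and follows essentially the same approach as the paper: both establish the coset identity $wW_{\upomega_i(\scrI)} = W_{\scrI}w$ (equivalently, $wW_{\upomega_i(\scrI)}w^{-1}=W_\scrI$) and then observe that the linear automorphism $w$ of $\mathfrak{h}_\C$ therefore descends to the quotients. The only difference is that the paper cites this identity from \cite[1.20(1)(a)]{IyamaWemyssTits}, whereas you prove it directly via the computation $w(\upalpha_k)=\upalpha_{\upiota_\scrI\upiota_{\scrI+i}(k)}$; that very computation appears almost verbatim in the paper's proof of the next lemma (Lemma~\ref{lem: induced iso B}), so you have simply front-loaded an argument the paper makes anyway.
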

\begin{proof}
Writing $w=\ell_\scrI\ell_{\scrI+i}$, the point is that $wW_{\upomega_i(\scrI)}=W_{\scrI}w$ (see e.g.\ \cite[1.20(1)(a)]{IyamaWemyssTits}).  The action by $w$ is an isomorphism $\mathfrak{h}_\C\to\mathfrak{h}_\C$, and under this isomorphism any orbit $(W_{\upomega_i(\scrI)}) p$ gets sent to $w (W_{\upomega_i(\scrI)} )p=(W_{\scrI}) w p$, which is an orbit under $W_{\scrI}$.
\end{proof}
In what follows, we will always consider left actions. When labelling arrows in commutative diagrams, we will often write e.g.\ $\ell_\scrI\ell_{\scrI+i}\cdot$ to denote the morphism given by left multiplication by $\ell_\scrI\ell_{\scrI+i}$.

Note that $\ell_{\upomega_i(\scrI)}\ell_{\scrI+i}\ell_\scrI\ell_{\scrI+i}=1$ \cite[1.2(3)]{IyamaWemyssTits}, and thus $\ell_{\upomega_i(\scrI)}\ell_{\scrI+i}\colon \mathfrak{h}_{\mathbb{C}}/W_{\scrI}\to \mathfrak{h}_{\mathbb{C}}/W_{\upomega_i(\scrI)}$ is the inverse map. Recall the notation in \eqref{preferred sim partial res}. Using the universal property of the pullback gives a non-obvious isomorphism between  $\scrV_{\scrI}$ and $\scrV_{\upomega_i(\scrI)}$, which sits in the following commutative diagram.
\def\myx{-1}
\def\myy{-1}
\[
\begin{tikzpicture}[>=stealth,scale=1.2]
\node[black!70!white] (U) at (2,-1.5) {$\Spec \scrV_{\upomega_i(\scrI)}$};
\node[black!70!white] (V) at (4,-1.5) {$\Spec \scrV$};
\node[black!70!white] (PRes) at (2,-3) {$\mathfrak{h}_{\mathbb{C}}/W_{\upomega_i(\scrI)}$};
\node[black!70!white] (Res) at (4,-3) {$\mathfrak{h}_{\mathbb{C}}/W$};
\draw[black!70!white,->] (U)--(V);
\draw[black!70!white,->] (V)--(Res);
\draw[black!70!white,->] (U)--node[gap,pos=0.76]{\phantom .}(PRes);
\draw[black!70!white,->] (PRes)--node[gap,pos=0.31]{\phantom .}(Res);
\node (fU) at ($(2,-1.5)+(\myx,\myy)$) {$\Spec \scrV_{\scrI}$};
\node (fV) at ($(4,-1.5)+(\myx,\myy)$) {$\Spec \scrV$};
\node (fPRes) at ($(2,-3)+(\myx,\myy)$) {$\mathfrak{h}_{\mathbb{C}}/W_{\scrI}$};
\node (fRes) at ($(4,-3)+(\myx,\myy)$) {$\mathfrak{h}_{\mathbb{C}}/W$};
\draw[->] (fU)--(fV);
\draw[->] (fV)--(fRes);
\draw[->] (fU)--(fPRes);
\draw[->] (fPRes)--(fRes);
\draw[double] (fRes)--(Res);
\draw[double] (fV)--(V);
\draw[<-] (fPRes)--node[pos=0.35,right]{$\scriptstyle \ell_{\scrI}\ell_{\scrI+i}\cdot$}(PRes);
\draw[<-] (fU)--node[right]{$\scriptstyle \sim$}(U);
\end{tikzpicture}
\]

Now let $\scrY_\scrI$, respectively $\scrY_{\upomega_i(\scrI)}$, be the standard simultaneous resolution associated to $\scrI$, respectively $\upomega_i(\scrI)$. As explained in Subsection~\ref{sec: deforming to Xt}, $\scrX$ can be obtained from $\upmu\colon\formaldisc\to\mathfrak{h}_{\mathbb{C}}/W_{\scrI}$ by pulling back $\scrY_\scrI$. Hence, setting $\upnu=( \ell_{\upomega_i(\scrI)}\ell_{\scrI+i} )\circ\upmu=(\ell_\scrI\ell_{\scrI+i})^{-1}\circ\upmu$ and pulling back to $\scrY_{\upomega_i(\scrI)}$ constructs a variety $\scrX^+_i$, sitting within the following commutative diagram.
\begin{equation}
\begin{array}{c}
\begin{tikzpicture}[>=stealth,scale=1.2]
\node[black!70!white] (X) at (0,0) {$\scrX_i^+$};
\node[black!70!white] (Z) at (2,0) {$\scrY_{\upomega_i(\scrI)}$};
\node[black!70!white] (R) at (0,-1.5) {$\Spec \scrR'$};
\node[black!70!white] (U) at (2,-1.5) {$\Spec \scrV_{\upomega_i(\scrI)}$};
\node[black!70!white] (V) at (4,-1.5) {$\Spec \scrV$};
\node[black!70!white] (t) at (0,-3) {$\formaldisc$};
\node[black!70!white] (PRes) at (2,-3) {$\mathfrak{h}_{\mathbb{C}}/W_{\upomega_i(\scrI)}$};
\node[black!70!white] (Res) at (4,-3) {$\mathfrak{h}_{\mathbb{C}}/W$};
\draw[black!70!white,->] (X)--(Z);
\draw[black!70!white,->] (X)--node[gap,pos=0.75]{\phantom .}(R);
\draw[black!70!white,->] (Z)--(U);
\draw[black!70!white,->] (R)--(U);
\draw[black!70!white,->] (U)--(V);
\draw[black!70!white,->] (V)--(Res);
\draw[black!70!white,->] (R)--node[gap,pos=0.72]{\phantom .}(t);
\draw[black!70!white,->] (U)--node[gap,pos=0.76]{\phantom .}(PRes);
\draw[black!70!white,->] (t)--node[gap,pos=0.735]{\phantom .}node[gap,pos=0.4]{$\scriptstyle\upnu$}(PRes);
\draw[black!70!white,->] (PRes)--node[gap,pos=0.31]{\phantom .}(Res);
\node (fX) at (\myx,\myy) {$\scrX$};
\node (fZ) at ($(2,0)+(\myx,\myy)$) {$\scrY_{\scrI}$};
\node (fR) at ($(0,-1.5)+(\myx,\myy)$) {$\Spec \scrR$};
\node (fU) at ($(2,-1.5)+(\myx,\myy)$) {$\Spec \scrV_{\scrI}$};
\node (fV) at ($(4,-1.5)+(\myx,\myy)$) {$\Spec \scrV$};
\node (ft) at ($(0,-3)+(\myx,\myy)$) {$\formaldisc$};
\node (fPRes) at ($(2,-3)+(\myx,\myy)$) {$\mathfrak{h}_{\mathbb{C}}/W_{\scrI}$};
\node (fRes) at ($(4,-3)+(\myx,\myy)$) {$\mathfrak{h}_{\mathbb{C}}/W$};
\draw[->] (fX)--(fZ);
\draw[->] (fX)--(fR);
\draw[->] (fZ)--(fU);
\draw[->] (fR)--(fU);
\draw[->] (fU)--(fV);
\draw[->] (fV)--(fRes);
\draw[->] (fR)--(ft);
\draw[->] (fU)--(fPRes);
\draw[->] (ft)--node[above]{$\scriptstyle\upmu$}(fPRes);
\draw[->] (fPRes)--(fRes);
\draw[double] (fRes)--(Res);
\draw[double] (fV)--(V);
\draw[<-] (fPRes)--node[pos=0.35,right]{$\scriptstyle \ell_{\scrI}\ell_{\scrI+i}\cdot$}(PRes);
\draw[<-] (fU)--node[right]{$\scriptstyle \sim$}(U);
\draw[double] (ft)--(t);
\draw[<-] (fR)--node[right]{$\scriptstyle \sim$}(R);
\end{tikzpicture}\label{create flop comm diagram}
\end{array}
\end{equation}
Composing the map $\scrX^+_i\to\Spec\scrR'$ with the isomorphism $\Spec\scrR'\to\Spec\scrR$ yielding a morphism $\scrX^+_i\to\Spec \scrR$.

\begin{thm}\label{thm: produce flop} \label{thm: flop by sim res}
With notation as above, $\scrX_i^+\to\Spec\scrR$ is the flop of $\scrX$ at the curve $\Curve_i$.  In particular, the following statements hold.
\begin{enumerate}
\item $\upomega_i(\scrI) \subseteq \Delta$ is the Dynkin data associated to the flopping contraction $\scrX_i^+\to\Spec\scrR$.
\item All other crepant resolutions of $\Spec\scrR$ can be obtained from the fixed $\upmu$ by post-composing with $x^{-1}\colon\mathfrak{h}_{\mathbb{C}}/W_{\scrI}\to\mathfrak{h}_{\mathbb{C}}/W_{\scrK}$ and pulling back along $\scrY_\scrK$, as the pair $(x,\scrK)$ ranges over the (finite) indexing set  $\mathsf{Cham}(\Delta,\scrI)$ of Notation~\ref{notation: subset stuff}.
\end{enumerate}
\end{thm}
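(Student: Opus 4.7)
The plan is to verify that the pullback construction in diagram~\eqref{create flop comm diagram} produces a crepant resolution of $\Spec\scrR$, and then to identify it as the flop at $\Curve_i$ by matching coset data with chambers of the movable cone via \cite{IyamaWemyssTits}. The diagram already exhibits $\scrX_i^+$ as a pullback of $\scrY_{\upomega_i(\scrI)}\to\Spec\scrV_{\upomega_i(\scrI)}$ along $\upnu=(\ell_\scrI\ell_{\scrI+i})^{-1}\circ\upmu$, and Lemma~\ref{lem: induced iso A} together with the universal property of the versal deformation $\Spec\scrV$ supplies the isomorphism $\Spec\scrR'\xrightarrow{\sim}\Spec\scrR$ along which $\scrX_i^+$ is viewed over $\Spec\scrR$.

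First I would check that $\scrX_i^+\to\Spec\scrR$ is a projective crepant resolution. Projectivity and crepancy are inherited by flat base change from the standard simultaneous partial resolution $\scrY_{\upomega_i(\scrI)}\to\Spec\scrV_{\upomega_i(\scrI)}$, while smoothness of $\scrX_i^+$ follows because post-composition with the isomorphism $\ell_\scrI\ell_{\scrI+i}$ of $\mathfrak{h}_\C/W_\scrI$ preserves the transversality condition~\eqref{condition transversality of muprime} for the perturbations of $\upmu$ constructed in Subsection~\ref{sec: perturbing target}. The central fibre of $\Spec\scrR'\to\formaldisc$ is the ADE singularity $\C^2/G$, and its preimage in $\scrX_i^+$ is the partial resolution $Y_{\upomega_i(\scrI)}$ by the construction of the standard simultaneous partial resolution in Subsection~\ref{sec: sim partial res I}; this yields (1) as soon as $\scrX_i^+$ has been identified with the flop.

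The main obstacle is to show that this particular crepant resolution coincides with the flop of $\scrX$ at $\Curve_i$, rather than with some other minimal model of $\Spec\scrR$. The strategy is to invoke \cite{IyamaWemyssTits}, which identifies $\mathsf{Cham}(\Delta,\scrI)$ with the chambers of the enhanced movable cone, and which matches the wall-crossing along $\Hyp_{\Curve_i}$ with the combinatorial passage from $(e,\scrI)$ to $(\ell_\scrI\ell_{\scrI+i},\upomega_i(\scrI))$. Since the chambers of the movable cone are in bijection with the crepant resolutions of $\Spec\scrR$, and passage to an adjacent chamber corresponds geometrically to flopping the associated curve, the coset datum used to build $\scrX_i^+$ pins it down as the flop at $\Curve_i$. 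The coset identity $W_\scrI(\ell_\scrI\ell_{\scrI+i})=(\ell_\scrI\ell_{\scrI+i})W_{\upomega_i(\scrI)}$ from Lemma~\ref{lem: induced iso A} is both what makes~\eqref{create flop comm diagram} well-posed and the algebraic shadow of this geometric flop.

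Part~(2) then follows by iteration. Any pair $(x,\scrK)\in\mathsf{Cham}(\Delta,\scrI)$ factors along a gallery of adjacent chambers from $(e,\scrI)$ as a sequence of elementary wall-crossings, and the corresponding composition of single-step isomorphisms assembles to $x^{-1}\colon\mathfrak{h}_\C/W_\scrI\to\mathfrak{h}_\C/W_\scrK$. Pulling back $\scrY_\scrK$ along $x^{-1}\circ\upmu$ therefore realises this iterated flop as a single cartesian square, and that every crepant resolution of $\Spec\scrR$ arises in this way is the bijection between $\mathsf{Cham}(\Delta,\scrI)$ and the movable cone chambers established in \cite{IyamaWemyssTits}.
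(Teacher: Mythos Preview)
Your outline captures the correct overall shape, but it has a genuine gap at the heart of the argument. You invoke the bijection from \cite{IyamaWemyssTits} between $\mathsf{Cham}(\Delta,\scrI)$ and chambers of the movable cone, together with the fact that adjacent chambers correspond to flops, and then assert that ``the coset datum used to build $\scrX_i^+$ pins it down as the flop at $\Curve_i$.'' But this is precisely what has to be proved: you have defined $\scrX_i^+$ by pulling back $\scrY_{\upomega_i(\scrI)}$ along $\upnu=(\ell_\scrI\ell_{\scrI+i})^{-1}\circ\upmu$, and nothing in your argument connects this pullback to any particular chamber of the movable cone of $\Spec\scrR$. The combinatorial wall-crossing rule tells you which coset \emph{should} label the flop, not that this pullback construction \emph{is} the flop.

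The paper closes this gap by working explicitly with class groups and characteristic cones. One identifies $\Cl(\scrV_\scrI)\cong\bigoplus_{j\in\scrIc}\Z e_j^\star$ and observes that $\scrY_\scrI$ is obtained by blowing up an element of the positive cone $C_\scrI$. The induced action of $\ell_\scrI\ell_{\scrI+i}$ on $\Cl(\scrW)$ is the Weyl action on coroots, and a direct computation (using \cite[1.20(1)(d)]{IyamaWemyssTits}) shows that this action carries $C_{\upomega_i(\scrI)}$ to the region adjacent to $C_\scrI$ across the wall $z_i=0$. Restricting to $\Spec\scrR$ via Pinkham's isomorphism $\Cl(\scrR)\cong\Cl(\scrV_\scrI)$, one concludes that $\scrX_i^+\to\Spec\scrR$ is obtained by blowing up a class in the neighbouring chamber, and it is this concrete identification (via \cite{Pinkham, HomMMP}) that forces $\scrX_i^+$ to be the flop at $\Curve_i$. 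Without some such computation your argument is circular.

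A smaller point: your smoothness argument is not right. The transversality condition~\eqref{condition transversality of muprime} concerns the perturbations $\upmu_t$ for $t\neq 0$, not $\upmu$ itself, and has nothing to do with smoothness of $\scrX_i^+$.
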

\begin{proof}
 Since the exceptional locus of $\mathsf{g}_\scrI\colon\scrY_\scrI\to\Spec\scrV_\scrI$ has codimension two, $\Cl(\scrV_{\scrI})\cong\Cl(\scrY_\scrI)$. But $\scrY_\scrI$ is smooth, so the latter is isomorphic to $\Pic(\scrY_\scrI)$, which in turn is isomorphic to $\mathbb{Z}^{|\scrIc|}$ based by divisors dual to the $|\scrIc|$ curves above the origin. Choosing this basis, we may write $\Cl(\scrV_{\scrI})\cong\bigoplus_{j\in\scrIc} \mathbb{Z}e_j^\star$, and further by definition $\scrY_\scrI$ is obtained by blowing up any element in the characteristic cone
\[
C_{\scrI}\colonequals\{ (z_j)\mid z_j>0\mbox{ for all }j\}\subseteq \,\,\bigoplus_{j\in\scrIc} \mathbb{Z}e_j^\star.
\] 
Applying a similar analysis to the standard simultaneous resolution $\scrZ\to\Spec\scrW$, we have $\Cl(\scrW)\cong\bigoplus_{j\in\Delta} \mathbb{Z}e_j^\star$, and since by construction $Y_{\scrI}$ is obtained from $\scrZ$ by simultaneously blowing down curves, it is clear that
\begin{equation}
\begin{array}{c}
\begin{tikzpicture}
\node (A) at (0,0) {$\Cl(\scrV_\scrI)$};
\node (B) at (2.5,0) {$\Cl(\scrW)$};
\node (a) at (0,-1.25) {$\bigoplus_{j\in\scrIc} \mathbb{Z}e_j^\star$};
\node (b) at (2.5,-1.25) {$\bigoplus_{j\in\Delta} \mathbb{Z}e_j^\star$};
\draw[->] (A)--node[left]{$\scriptstyle \sim$}(a);
\draw[->] (B)--node[left]{$\scriptstyle \sim$}(b);
\draw[right hook->] (A)--(B);
\draw[right hook->] (a)--(b);
\end{tikzpicture}
\end{array}\label{class group inclusions}
\end{equation}
where the bottom morphism is the obvious inclusion induced by the inclusion $\scrIc \subseteq \Delta$. There is a natural identification of $\Cl(\scrV_\scrI)$, respectively $\Cl(\scrW_\scrI)$, with the lattice inside $\Theta_\scrI$, respectively $\Theta$.

By the universal property of the pullback, the action of any $s_j\in W$ on $\mathfrak{h}_{\C}$ induces an automorphism of $\scrW$
\[
\begin{tikzpicture}[>=stealth,scale=1.2];
\node[black!70!white] (U) at (2,-1.5) {$\Spec \scrW$};
\node[black!70!white] (V) at (4,-1.5) {$\Spec \scrV$};
\node[black!70!white] (PRes) at (2,-3) {$\mathfrak{h}_{\mathbb{C}}$};
\node[black!70!white] (Res) at (4,-3) {$\mathfrak{h}_{\mathbb{C}}/W$};
\draw[black!70!white,->] (U)--(V);
\draw[black!70!white,->] (V)--(Res);
\draw[black!70!white,->] (U)--node[gap,pos=0.74]{\phantom .}(PRes);
\draw[black!70!white,->] (PRes)--node[gap,pos=0.6]{\phantom .}(Res);
\node (fU) at ($(2,-1.5)+(\myx,\myy)$) {$\Spec \scrW$};
\node (fV) at ($(4,-1.5)+(\myx,\myy)$) {$\Spec \scrV$};
\node (fPRes) at ($(2,-3)+(\myx,\myy)$) {$\mathfrak{h}_{\mathbb{C}}$};
\node (fRes) at ($(4,-3)+(\myx,\myy)$) {$\mathfrak{h}_{\mathbb{C}}/W$};
\draw[->] (fU)--(fV);
\draw[->] (fV)--(fRes);
\draw[->] (fU)--(fPRes);
\draw[->] (fPRes)--(fRes);
\draw[double] (fRes)--(Res);
\draw[double] (fV)--(V);
\draw[<-] (fPRes)--node[pos=0.4,right]{$\scriptstyle s_j\cdot$}(PRes);
\draw[<-] (fU)--node[right]{$\scriptstyle \sim$}(U);
\end{tikzpicture}
\]
The effect on homology of $\scrZ$ is via the Weyl reflection $s_j$ on roots \cite{Reid}, and thus the action on $\Cl(\scrW)$ is the dual, namely the action of the Weyl reflection $s_j$ on coroots.   

Composing these $s_j$, we can consider the action of $\ell_\scrI\ell_{\scrI+i}$.  We already know that the bottom right square in \eqref{create flop comm diagram} commutes, and hence the bottom two squares in the following are well defined, and commute.
\[
\begin{array}{c}
\begin{tikzpicture}[>=stealth,scale=1.2]
\node[black!70!white] (R) at (0,-1.5) {$\Spec \scrW$};
\node[black!70!white] (U) at (2,-1.5) {$\Spec \scrV_{\upomega_i(\scrI)}$};
\node[black!70!white] (V) at (4,-1.5) {$\Spec \scrV$};
\node[black!70!white] (t) at (0,-3) {$\mathfrak{h}_{\mathbb{C}}$};
\node[black!70!white] (PRes) at (2,-3) {$\mathfrak{h}_{\mathbb{C}}/W_{\upomega_i(\scrI)}$};
\node[black!70!white] (Res) at (4,-3) {$\mathfrak{h}_{\mathbb{C}}/W$};
\draw[black!70!white,->] (R)--(U);
\draw[black!70!white,->] (U)--(V);
\draw[black!70!white,->] (V)--(Res);
\draw[black!70!white,->] (R)--node[gap,pos=0.74]{\phantom .}(t);
\draw[black!70!white,->] (U)--node[gap,pos=0.76]{\phantom .}(PRes);
\draw[black!70!white,->] (t)--node[gap,pos=0.77]{\phantom .}(PRes);
\draw[black!70!white,->] (PRes)--node[gap,pos=0.31]{\phantom .}(Res);
\node (fR) at ($(0,-1.5)+(\myx,\myy)$) {$\Spec \scrW$};
\node (fU) at ($(2,-1.5)+(\myx,\myy)$) {$\Spec \scrV_{\scrI}$};
\node (fV) at ($(4,-1.5)+(\myx,\myy)$) {$\Spec \scrV$};
\node (ft) at ($(0,-3)+(\myx,\myy)$) {$\mathfrak{h}_{\mathbb{C}}$};
\node (fPRes) at ($(2,-3)+(\myx,\myy)$) {$\mathfrak{h}_{\mathbb{C}}/W_{\scrI}$};
\node (fRes) at ($(4,-3)+(\myx,\myy)$) {$\mathfrak{h}_{\mathbb{C}}/W$};
\draw[->] (fR)--(fU);
\draw[->] (fU)--(fV);
\draw[->] (fV)--(fRes);
\draw[->] (fR)--(ft);
\draw[->] (fU)--(fPRes);
\draw[->] (ft)--(fPRes);
\draw[->] (fPRes)--(fRes);
\draw[double] (fRes)--(Res);
\draw[double] (fV)--(V);
\draw[<-] (fPRes)--node[pos=0.4,right]{$\scriptstyle \ell_{\scrI}\ell_{\scrI+i}\cdot$}(PRes);
\draw[<-] (fU)--node[right]{$\scriptstyle \sim$}(U);
\draw[<-] (ft)--node[right]{$\scriptstyle\ell_\scrI\ell_{\scrI+i}\cdot$}(t);
\draw[<-] (fR)--node[right]{$\scriptstyle \sim$}(R);
\end{tikzpicture}
\end{array}
\]
The universal property of pullbacks give the induced isomorphisms in the top squares.  The top left square induces the following bottom commutative square on class groups, and the top square is obtained by applying \eqref{class group inclusions} to both $\scrI$ and to $\upomega_i(\scrI)$. 
\[
\begin{tikzpicture}[>=stealth,scale=1.2];
\node[black!70!white] (U) at (2,-1.5) {$\bigoplus_{j\in\Delta} \mathbb{Z}e_j^\star$};
\node[black!70!white] (V) at (4.5,-1.5) {$\bigoplus_{j\in\upomega_i(\scrI)^{\mathrm{c}}} \mathbb{Z}e_j^\star$};
\node[black!70!white] (PRes) at (2,-3) {$\Cl(\scrW)$};
\node[black!70!white] (Res) at (4.5,-3) {$\Cl(\scrV_{\upomega_i(\scrI)})$};
\draw[black!70!white,left hook->] (V)--node[above]{$\scriptstyle i_1$}(U);
\draw[black!70!white,<-] (V)--(Res);
\draw[black!70!white,<-] (U)--node[gap,pos=0.745]{\phantom .}(PRes);
\draw[black!70!white,left hook->] (Res)--node[gap,pos=0.19]{\phantom .}(PRes);
\node (fU) at ($(2,-1.5)+(\myx,\myy)$) {$\bigoplus_{j\in\Delta} \mathbb{Z}e_j^\star$};
\node (fV) at ($(4.5,-1.5)+(\myx,\myy)$) {$\bigoplus_{j\in\scrIc} \mathbb{Z}e_j^\star$};
\node (fPRes) at ($(2,-3)+(\myx,\myy)$) {$\Cl(\scrW)$};
\node (fRes) at ($(4.5,-3)+(\myx,\myy)$) {$\Cl(\scrV_\scrI)$};
\draw[left hook->] (fV)--node[pos=0.4,above]{$\scriptstyle i_2$}(fU);
\draw[<-] (fV)--(fRes);
\draw[<-] (fU)--(fPRes);
\draw[left hook->] (fRes)--(fPRes);
\draw[<-] (fRes)--node[pos=0.4,right]{$\scriptstyle \ell_\scrI\ell_{\scrI+i}\cdot$}(Res);
\draw[densely dotted,<-] (fV)--(V);
\draw[<-] (fPRes)--node[pos=0.4,right]{$\scriptstyle \ell_\scrI\ell_{\scrI+i}\cdot$}(PRes);
\draw[<-] (fU)--node[pos=0.6,left]{$\scriptstyle \ell_\scrI\ell_{\scrI+i}\cdot$}(U);
\end{tikzpicture}
\]
In the above diagram, every non-hooked arrow is an isomorphism.   By construction, the dotted arrow takes the characteristic cone 
\[
C_{\upomega_i(\scrI)}\colonequals\{ (z_j)\mid z_j>0\mbox{ for all }j\}\subseteq \bigoplus_{j\in\upomega_i(\scrI)^{\mathrm{c}}} \mathbb{Z}e_j^\star
\]
of $Y_{\upomega_i(\scrI)}$ to the region $\ell_{\scrI}\ell_{\scrI+i}C_{\upomega_i(\scrI)}\subseteq \bigoplus_{j\in\scrIc} \mathbb{Z}e_j^\star$.  Since $\ell_{\scrI}\ell_{\scrI+i}$ acts via the action on coroots, this matches the conventions in \cite[Section~3]{IyamaWemyssTits}. It thus immediately follows from \cite[1.20(1)(d)]{IyamaWemyssTits} that $\ell_{\scrI}\ell_{\scrI+i}C_{\upomega_i(\scrI)}$ and $C_{\scrI}$ are neighbouring regions, adjacent via the wall $z_i=0$.  

We next restrict this information to $3$-folds. As in \eqref{create flop comm diagram},  consider the following commutative diagram.
\begin{equation}
\begin{array}{c}
\begin{tikzpicture}[>=stealth,scale=1.2]
\node[black!70!white] (X) at (0,0) {$\scrX_i^+$};
\node[black!70!white] (Z) at (2,0) {$\scrY_{\upomega_i(\scrI)}$};
\node[black!70!white] (R) at (0,-1.5) {$\Spec \scrR'$};
\node[black!70!white] (U) at (2,-1.5) {$\Spec \scrV_{\upomega_i(\scrI)}$};
\node[black!70!white] (t) at (0,-3) {$\formaldisc$};
\node[black!70!white] (PRes) at (2,-3) {$\mathfrak{h}_{\mathbb{C}}/W_{\upomega_i(\scrI)}$};
\draw[black!70!white,->] (X)--(Z);
\draw[black!70!white,->] (X)--node[gap,pos=0.75]{\phantom .}(R);
\draw[black!70!white,->] (Z)--(U);
\draw[black!70!white,->] (R)--node[gap,pos=0.72]{\phantom .}(U);
\draw[black!70!white,->] (R)--node[gap,pos=0.72]{\phantom .}(t);
\draw[black!70!white,->] (U)--(PRes);
\draw[black!70!white,->] (t)--node[gap,pos=0.735]{\phantom .}node[below,pos=0.4]{$\scriptstyle\upnu$}(PRes);

\node (fX) at (\myx,\myy) {$\scrX$};
\node (fZ) at ($(2,0)+(\myx,\myy)$) {$\scrY_{\scrI}$};
\node (fR) at ($(0,-1.5)+(\myx,\myy)$) {$\Spec \scrR$};
\node (fU) at ($(2,-1.5)+(\myx,\myy)$) {$\Spec \scrV_{\scrI}$};
\node (ft) at ($(0,-3)+(\myx,\myy)$) {$\formaldisc$};
\node (fPRes) at ($(2,-3)+(\myx,\myy)$) {$\mathfrak{h}_{\mathbb{C}}/W_{\scrI}$};
\draw[->] (fX)--(fZ);
\draw[->] (fX)--(fR);
\draw[->] (fZ)--(fU);
\draw[->] (fR)--(fU);
\draw[->] (fR)--(ft);
\draw[->] (fU)--(fPRes);
\draw[->] (ft)--node[above]{$\scriptstyle\upmu$}(fPRes);
\draw[<-] (fPRes)--node[pos=0.35,right]{$\scriptstyle \ell_{\scrI}\ell_{\scrI+i}\cdot$}(PRes);
\draw[<-] (fU)--node[right]{$\scriptstyle \sim$}(U);
\draw[double] (ft)--(t);
\draw[<-] (fR)--node[right]{$\scriptstyle \sim$}(R);
\end{tikzpicture}
\end{array}\label{create flop comm diagram 2}
\end{equation}
As explained by Pinkham \cite{Pinkham}, $\Cl(\scrR)\cong \Cl(\scrV_{\scrI})$, and further $\Cl(\scrV_\scrI)\cong\bigoplus_{j\in\scrIc} \mathbb{Z}e_j^\star$ as explained above.  Hence $\Cl(\scrR)\cong \bigoplus_{j\in\scrIc} \mathbb{Z}e_j^\star$, and under \emph{this} choice of basis $\scrX$ is obtained as the blowup of the characteristic cone $C_\scrI$.  The same analysis holds for $\scrX_i^+$, which is the blowup of the characteristic cone $C_{\upomega_i(\scrI)}$ under the choice of basis $\Cl(\scrR')\cong \bigoplus_{j\in\upomega_i(\scrI)^{\mathrm{c}}} \mathbb{Z}e_j^\star$ induced from $\Cl(\scrR')\cong \Cl(\scrV_{\upomega_i(\scrI)})$.

Pulling across the middle horizontal plane in \eqref{create flop comm diagram 2} it thus follows that the map $\scrX_i^+\to\Spec\scrR$ is obtained by blowing up the region $\ell_\scrI\ell_{\scrI+i}C_{\upomega_i(\scrI)}$ in $\Cl(\scrR)\cong\bigoplus_{j\in\scrIc} \mathbb{Z}e_j^\star$.  Since these are neighbouring regions, separated by the codimension one wall $e_i^\star=0$, it is implicit in \cite{Pinkham} (see also \cite{HomMMP}) that $\scrX_i\to\Spec\scrR$ is the flop at the curve $\Curve_i$.   Since $\scrX^+_i$ is obtained from $\scrY_{\upomega_i(\scrI)}$ via pullback, the statement on Dynkin data follows.

The final statement about all other crepant resolutions follows by iterating over all possible simple flops.  Indeed, the finite indexing set $\mathsf{Cham}(\Delta,\scrI)$ is precisely the combinatorial object which indexes all the chambers \cite[Section~1]{IyamaWemyssTits}, and each chamber $(x,\scrK)$ can be obtained from $(1,\scrI)$ by iteratively applying the wall crossing rule \cite[1.20(2)]{IyamaWemyssTits}.
\end{proof}

\section{Applications}\label{sec: applications}

\noindent As before, consider a smooth $3$-fold flopping contraction $\scrX\to\Spec\scrR$. The main applications of the previous sections are to GV and GW invariants, the Crepant Transformation Conjecture, and to the associated contraction algebras.

\subsection{Tracking GV invariants under flop}\label{sec: GV under flop}
The benefit of Theorem~\ref{thm: produce flop} is that both $\scrX$ and a flop $\scrX_i^+$ can be perturbed using essentially the same classifying map, and thus their curve invariants can be easily compared.  This requires three combinatorial results.

\begin{lemma}\label{lem: induced iso B}
$\ell_{\scrI}\ell_{\scrI+i}\colon\mathfrak{h}\to\mathfrak{h}$ induces an isomorphism $\mathsf{M}_i \colon \mathfrak{h}_{\upomega_i(\scrI)}\to\mathfrak{h}_{\scrI}$.
\end{lemma}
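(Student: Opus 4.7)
The goal is to check that the linear automorphism $w \colonequals \ell_\scrI \ell_{\scrI+i}$ of $\mathfrak{h}$ sends the kernel of $\uppi_{\upomega_i(\scrI)}$ into the kernel of $\uppi_\scrI$. Since both kernels have the same dimension (namely $|\scrI|=|\upomega_i(\scrI)|$, as $\upomega_i(\scrI)$ is obtained from $\scrI+i$ by removing a single node), the induced map between $|\Delta|-|\scrI|$-dimensional quotients will then automatically be an isomorphism. No genuine obstacle is anticipated; this is a direct combinatorial unwinding of the Dynkin involution.

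The main ingredient is the standard fact that for any ADE subgraph $\Gamma \subseteq \Delta$, the longest element $\ell_\Gamma$ of the parabolic $W_\Gamma$ permutes the simple roots of the subsystem up to sign via the Dynkin involution: $\ell_\Gamma(\upalpha_j) = -\upalpha_{\upiota_\Gamma(j)}$ for every $j \in \Gamma$. Applying this first to $\Gamma=\scrI+i$ and then to $\Gamma=\scrI$, for $j\in\upomega_i(\scrI)\subseteq\scrI+i$ we compute
\[
w(\upalpha_j) \;=\; \ell_\scrI\bigl(-\upalpha_{\upiota_{\scrI+i}(j)}\bigr).
\]
The crucial point is that $\upiota_{\scrI+i}$ restricts to a bijection $\upomega_i(\scrI) \to \scrI$: indeed, by the very definition $\upomega_i(\scrI) = (\scrI+i) \setminus \{\upiota_{\scrI+i}(i)\}$, and involutivity gives
\[
\upiota_{\scrI+i}(\upomega_i(\scrI)) \;=\; \upiota_{\scrI+i}(\scrI+i) \setminus \{i\} \;=\; (\scrI+i)\setminus\{i\} \;=\; \scrI.
\]
Consequently $\upiota_{\scrI+i}(j) \in \scrI$, so we may apply the longest-element formula for $\ell_\scrI$ a second time to conclude
\[
w(\upalpha_j) \;=\; \upalpha_{\upiota_\scrI \circ \upiota_{\scrI+i}(j)} \;\in\; \mathrm{span}\{\upalpha_k \mid k \in \scrI\}.
\]

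This shows $w$ carries $\mathrm{span}\{\upalpha_j\mid j\in\upomega_i(\scrI)\}$ into $\mathrm{span}\{\upalpha_k\mid k\in\scrI\}$, hence descends to a well-defined linear map $\mathsf{M}_i\colon \mathfrak{h}_{\upomega_i(\scrI)}\to\mathfrak{h}_\scrI$ with the property that $\uppi_\scrI\circ w = \mathsf{M}_i \circ \uppi_{\upomega_i(\scrI)}$. Since $w$ is invertible on $\mathfrak{h}$ and both subspaces above have dimension $|\scrI|$, $w$ restricts to an isomorphism between them, and therefore $\mathsf{M}_i$ is an isomorphism between quotients of equal dimension. (An inverse is obtained analogously from $w^{-1} = \ell_{\upomega_i(\scrI)}\ell_{\scrI+i}$, using the mirror identity $\upiota_{\scrI+i}(\scrI)=\upomega_i(\scrI)$.) This completes the construction and verifies compatibility with Lemma~\ref{lem: induced iso A} at the linear level, before passing to Weyl-group quotients.
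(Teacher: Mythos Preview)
Your proof is correct and follows essentially the same approach as the paper: both compute $w(\upalpha_j)=\upalpha_{\upiota_\scrI\circ\upiota_{\scrI+i}(j)}$ for $j\in\upomega_i(\scrI)$ using the standard formula $\ell_\Gamma\upalpha_j=-\upalpha_{\upiota_\Gamma(j)}$ twice, after verifying that $\upiota_{\scrI+i}$ restricts to a bijection $\upomega_i(\scrI)\to\scrI$. The only cosmetic difference is that you phrase the reduction in terms of kernels of $\uppi_{\upomega_i(\scrI)}$ and $\uppi_\scrI$, whereas the paper speaks of the spans of the corresponding simple roots; these are of course the same subspaces.
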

\begin{proof}
Set $w=\ell_{\scrI}\ell_{\scrI+i}$, then it suffices to prove that $w\cdot$ restricts to an isomorphism between the subspace spanned by $\{\upalpha_j\mid j\in\upomega_i(\scrI)\}$ and the subspace spanned by $\{\upalpha_j\mid j\in\scrI\}$.  To see this, recall that $\ell_\Gamma\upalpha_i=-\upalpha_{\upiota_\Gamma(i)}$ for all Dynkin $\Gamma$ and all $i\in\Gamma$, where $\upiota_\Gamma$ is the Dynkin involution on $\Gamma$.

For all $j\in \upomega_i(\scrI)\subseteq\scrI+i$, it follows that $\ell_\scrI\ell_{\scrI+i}\upalpha_j=-\ell_{\scrI}\upalpha_{\upiota_{\scrI+i}(j)}$.  Now $\upiota_{\scrI+i}\colon\upomega_i(\scrI)\to\scrI$ is a bijection. Indeed, $\upiota_{\scrI+i}\colon\scrI+i\to\scrI+i$ is a bijection, sending $\upiota_{\scrI+i}(i)$ to $i$, and so removing these elements gives the claimed bijection. Hence for all $j\in\upomega_i(\scrI)$, it follows that $\upiota_{\scrI+i}(j)\in\scrI$ and so $\ell_{\scrI}\upalpha_{\upiota_{\scrI+i}(j)}=-\upalpha_{\upiota_\scrI\upiota_{\scrI+i}(j)}$. Combining gives $\ell_\scrI\ell_{\scrI+i}\upalpha_j=\upalpha_{\upiota_\scrI\upiota_{\scrI+i}(j)}$ for all $j\in\upomega_i(\scrI)$. Since $\upiota_\scrI\upiota_{\scrI+i}(j)\in\scrI$, this proves the claim.
\end{proof}
Write $\mathsf{M}_i$ for the induced isomorphism in Lemma~\ref{lem: induced iso B}, so that the following diagrams commute.
\begin{equation}
\begin{array}{c}
\begin{tikzpicture}
\node (A) at (0,0) {$\mathfrak{h}$};
\node (B) at (2.5,0) {$\mathfrak{h}$};
\node (a) at (0,-1.5) {$\mathfrak{h}_{\upomega_i(\scrI)}$};
\node (b) at (2.5,-1.5) {$\mathfrak{h}_{\scrI}$};
\draw[->] (A)--node[above]{$\scriptstyle  \ell_{\scrI}\ell_{\scrI+i}\cdot$}(B);
\draw[->] (A)--node[left]{$\scriptstyle \uppi_{\upomega_i(\scrI)}$}(a);
\draw[->] (B)--node[right]{$\scriptstyle \uppi_{\scrI}$}(b);
\draw[->,densely dotted] (a)--node[above]{$\scriptstyle \mathsf{M}_i$}(b);
\end{tikzpicture}
\end{array}
\quad
\begin{array}{c}
\begin{tikzpicture}
\node (A) at (0,0) {$\mathfrak{h}$};
\node (B) at (2.5,0) {$\mathfrak{h}$};
\node (a) at (0,-1.5) {$\mathfrak{h}_{\scrI}$};
\node (b) at (2.5,-1.5) {$\mathfrak{h}_{\upomega_i(\scrI)}$};
\draw[->] (A)--node[above]{$\scriptstyle  \ell_{\upomega_i(\scrI)}\ell_{\scrI+i}\cdot$}(B);
\draw[->] (A)--node[left]{$\scriptstyle \uppi_{\scrI}$}(a);
\draw[->] (B)--node[right]{$\scriptstyle \uppi_{\upomega_i(\scrI)}$}(b);
\draw[->,densely dotted] (a)--node[above]{$\scriptstyle \mathsf{M}_i^{-1}$}(b);
\end{tikzpicture}
\end{array}
\label{defn Mi}
\end{equation}
In essence, the bases of $\mathfrak{h}_{\upomega_i(\scrI)}$ and $\mathfrak{h}_{\scrI}$ really only differ at one element. Indeed, setting $e_j = \uppi_\scrI(\upalpha_j)$ then $\{ e_j \mid j \not\in \scrI \}$ is a basis for $\mathfrak{h}_\scrI$. On the other hand for $\mathfrak{h}_{\omega_i(\scrI)}$ we abuse notation, setting $e_j=\uppi_{\upomega_i(\scrI)}(\upalpha_j)$ whenever $j\notin\scrI+i$, and $e_i=\uppi_{\upomega_i(\scrI)}(\upalpha_{\upiota_{\scrI+i}(i)})$. Then $\{e_j,e_i\mid j\notin\scrI+i\}$ is a basis for $\mathfrak{h}_{\upomega_i(\scrI)}$.
\begin{lemma}\label{lem: action of Mi}
For $i \in \scrIc$ the action of $\mathsf{M}_i$ is given in terms of the above bases as
\[
e_k
\mapsto
\begin{cases}
e_k+\uplambda_{k}e_i&\mbox{if }k\notin\scrI+i\\
-e_i&\mbox{if }k=i
\end{cases}
\]
for some $\uplambda_k \in \ZZ_{\geq 0}$.
\end{lemma}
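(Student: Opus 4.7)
The plan is to use the defining commutative diagrams \eqref{defn Mi}, which give the characterisation $\mathsf{M}_i \circ \uppi_{\upomega_i(\scrI)} = \uppi_\scrI \circ (\ell_\scrI \ell_{\scrI+i})\cdot$, and to compute the image of each named basis vector directly. The whole argument rests on one elementary observation: for any subset $\Gamma \subseteq \Delta$, any $w \in W_\Gamma$, and any $k \in \Delta$,
\[
w\upalpha_k - \upalpha_k \;\in\; \mathrm{span}(\upalpha_j : j \in \Gamma).
\]
This is immediate by induction on the length of $w$ from the formula $s_j\upalpha_k = \upalpha_k - \langle\upalpha_k,\upalpha_j^\vee\rangle\upalpha_j$ together with the fact that $W_\Gamma$ preserves $\mathrm{span}(\upalpha_j : j \in \Gamma)$. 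Applied with $\Gamma = \scrI$, this observation yields the key identity $\uppi_\scrI \circ \ell_\scrI = \uppi_\scrI$, which is essentially the statement that $W_\scrI$ acts trivially on the quotient $\mathfrak{h}_\scrI$.

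For the case $k = i$, I would use the identity $\ell_{\scrI+i}\upalpha_{\upiota_{\scrI+i}(i)} = -\upalpha_i$ (already invoked in the proof of Lemma~\ref{lem: induced iso B}) and combine it with $\uppi_\scrI \circ \ell_\scrI = \uppi_\scrI$ to compute
\[
\mathsf{M}_i(e_i) \;=\; \uppi_\scrI\!\bigl(\ell_\scrI\ell_{\scrI+i}\upalpha_{\upiota_{\scrI+i}(i)}\bigr) \;=\; -\uppi_\scrI(\ell_\scrI\upalpha_i) \;=\; -\uppi_\scrI(\upalpha_i) \;=\; -e_i,
\]
which is the stated formula.

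For the case $k \notin \scrI+i$, I would apply the observation with $\Gamma = \scrI+i$ to deduce that $\ell_{\scrI+i}\upalpha_k$ is a root whose coefficient on $\upalpha_k$ equals $1$, and hence is a positive root. Its simple-root expansion therefore has non-negative integer coefficients, so one can write
\[
\ell_{\scrI+i}\upalpha_k \;=\; \upalpha_k + \uplambda_k \upalpha_i + \textstyle\sum_{j\in\scrI} c_j \upalpha_j,\qquad \uplambda_k,c_j \in \ZZ_{\geq 0}.
\]
Applying $\uppi_\scrI \circ \ell_\scrI = \uppi_\scrI$ kills the $\scrI$-terms, yielding $\mathsf{M}_i(e_k) = e_k + \uplambda_k e_i$ with $\uplambda_k \geq 0$, exactly as required.

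There is no substantive obstacle beyond bookkeeping: the only mild subtlety is that the symbols $e_i$ and $e_k$ refer to slightly different vectors on the source and target sides (the source basis uses $\upalpha_{\upiota_{\scrI+i}(i)}$ where the target basis uses $\upalpha_i$), so one must keep track of which quotient map is being applied at each step. Once this is done, both cases reduce to a direct computation from the single observation above, and positivity of $\uplambda_k$ is automatic.
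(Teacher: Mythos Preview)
Your proof is correct and follows essentially the same approach as the paper. Both arguments rest on the observation that elements of $W_{\scrI+i}$ can only modify coefficients indexed by $\scrI+i$, and both extract non-negativity of $\uplambda_k$ from the fact that the image of $\upalpha_k$ retains coefficient $1$ on $\upalpha_k$ and is therefore a positive root. Your explicit isolation of the identity $\uppi_\scrI \circ \ell_\scrI = \uppi_\scrI$ is a clean way to package the bookkeeping, and lets you read $\uplambda_k$ directly off $\ell_{\scrI+i}\upalpha_k$ rather than $\ell_\scrI\ell_{\scrI+i}\upalpha_k$, but the underlying content is identical.
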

\begin{proof}
In \eqref{defn Mi}, given any $\sum a_i\upalpha_i$, since $\ell_\scrI\ell_{\scrI+i}$ consists only of reflections $s_i$ with $i\in\scrI+i$, the map $\ell_\scrI\ell_{\scrI+i}$ cannot change the coefficient of any  $a_j$ with $j\notin\scrI+i$.  The claim that the induced map $\mathsf{M}_i$ sends $e_k\mapsto e_k+\uplambda_{k}e_i$  if $k\notin\scrI+i$ follows. We next claim that $\uplambda_k$ is positive.  In the decomposition of $\ell_\scrI\ell_{\scrI+i}\upalpha_k$ into simple roots, there is at least some positive coefficient (namely the coefficient of $\upalpha_k$, which is $1$). Hence all coefficients must be positive, in particular the coefficient of $\upalpha_{\upiota_{\scrI+i}(i)}$. But under the induced map, this coefficient is what gives $\uplambda_k$ in the claim.   

Now as in Lemma~\ref{lem: induced iso B}, for all Dynkin $\Gamma$ and all $i\in\Gamma$, $\ell_\Gamma\upalpha_i=-\upalpha_{\upiota_\Gamma(i)}$.  Thus since $\upiota_{\scrI+i}(i)\in\scrI+i$, it follows that
\[
\mathsf{M}_ie_i
\stackrel{\scriptstyle\eqref{defn Mi}}{=}
\uppi_\scrI(\ell_\scrI\ell_{\scrI+i}\upalpha_{\upiota_{\scrI+i}(i)})
=\uppi_\scrI(-\ell_\scrI\upalpha_i)=\uppi_\scrI(-\upalpha_i)=-e_i,
\]
where we have used the fact that $\ell_\scrI$ only changes coefficients in $\scrI$, and $\uppi_\scrI$ forgets these.
\end{proof}

\begin{cor}\label{cor: action of Mi pos neg}
If $\upbeta\in\mathfrak{h}_{\upomega_i(\scrI)}$ is a restricted root, the following hold.
\begin{enumerate}
\item If $\upbeta\in\mathbb{Z}e_i$, say $\upbeta=ze_i$, then $\mathsf{M}_i\cdot(ze_i)=-ze_i$.
\item If $\upbeta\notin\mathbb{Z}e_i$, then all entries of $\mathsf{M}_i\cdot\upbeta$ are positive.
\end{enumerate}
\end{cor}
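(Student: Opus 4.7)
For (1), the statement is immediate from Lemma~\ref{lem: action of Mi}: we have $\mathsf{M}_i e_i = -e_i$, and linearity gives $\mathsf{M}_i(ze_i) = -ze_i$.

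For (2), the plan is to combine Lemma~\ref{lem: action of Mi} with a direct root-theoretic argument. Writing $\upbeta = b_i e_i + \sum_{k \notin \scrI+i} b_k e_k$ in the basis of $\mathfrak{h}_{\upomega_i(\scrI)}$ given just before Lemma~\ref{lem: action of Mi}, and lifting $\upbeta$ to a positive root $\upalpha \in \mathfrak{h}$, each coefficient $b_i, b_k$ is a non-negative integer. The formula in Lemma~\ref{lem: action of Mi} then gives
\[
\mathsf{M}_i \upbeta = \sum_{k \notin \scrI+i} b_k e_k + \Big(\sum_{k \notin \scrI+i} b_k \uplambda_k - b_i\Big) e_i.
\]
The coefficients of $e_k$ for $k \in (\scrI+i)^c$ are already $b_k \geq 0$, so the only remaining task is to show that the coefficient of $e_i$ is $\geq 0$.

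To handle that coefficient, I would use the commutative diagram \eqref{defn Mi}: it equals the coefficient of the simple root $\upalpha_i$ in $\ell_\scrI \ell_{\scrI+i} \upalpha$. The plan is to prove that $\ell_\scrI \ell_{\scrI+i} \upalpha$ is itself a \emph{positive} root, from which the desired non-negativity follows immediately. Here the key observation is that $\ell_\scrI$ and $\ell_{\scrI+i}$ are products of simple reflections $s_j$ with $j \in \scrI \subseteq \scrI+i$; since each $s_j$ only modifies the coefficient of $\upalpha_j$, the coefficients $a_k$ of $\upalpha$ at simple roots $k \in (\scrI+i)^c$ are \emph{preserved} under the action of $\ell_\scrI\ell_{\scrI+i}$.

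The hypothesis $\upbeta \notin \ZZ e_i$ translates into the assertion that $\upalpha$ has a strictly positive coefficient $a_k > 0$ at some $k \in (\scrI+i)^c$: for if all such coefficients vanished then $\upalpha$ would be supported inside $\scrI+i$ and $\upbeta = \uppi_{\upomega_i(\scrI)}(\upalpha)$ would be a multiple of $e_i$. By the preservation statement above, this positive coefficient survives under $\ell_\scrI\ell_{\scrI+i}$, forcing $\ell_\scrI\ell_{\scrI+i}\upalpha$ to be a root with a strictly positive coefficient, and hence a positive root, as required. The main obstacle is a careful bookkeeping of the abuse of notation from just before Lemma~\ref{lem: action of Mi}, whereby $e_i$ in $\mathfrak{h}_{\upomega_i(\scrI)}$ actually denotes $\uppi_{\upomega_i(\scrI)}(\upalpha_{\upiota_{\scrI+i}(i)})$ rather than the naively-indexed element; once this identification is correctly unpacked when lifting $\upbeta$ to $\upalpha$, the core argument -- that Weyl reflections inside $\scrI+i$ fix coefficients outside $\scrI+i$ -- is essentially trivial.
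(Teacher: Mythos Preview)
Your proposal is correct and follows essentially the same route as the paper. Both arguments hinge on the observation that $\ell_\scrI\ell_{\scrI+i}$ is a product of simple reflections indexed in $\scrI+i$ and therefore preserves the coefficients of $\upalpha$ outside $\scrI+i$; combined with the hypothesis $\upbeta\notin\mathbb{Z}e_i$, this forces $w\cdot\upalpha$ to have a strictly positive coefficient and hence to be a positive root, from which the non-negativity of all entries of $\mathsf{M}_i\upbeta$ follows. The paper phrases the preservation step at the quotient level via Lemma~\ref{lem: action of Mi} before lifting, whereas you carry it out directly on $\upalpha$, but the content is identical.
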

\begin{proof}
The first part is an immediate consequence of the $k=i$ case in Lemma~\ref{lem: action of Mi}. For the second part, by Lemma~\ref{lem: action of Mi} the only coefficient of $\upbeta=\sum\upmu_ie_i$ that can change under $\mathsf{M}_i$ is the coefficient on $e_i$. Hence, provided there is some other positive coefficient $\upmu_k$, this survives under $\mathsf{M}_i$, so $\mathsf{M}_i\cdot\upbeta$ has at least one positive entry.  Now by assumption $\upbeta$ is a restricted root, say $\uppi_{\upomega_i(\scrI)}(\upalpha)=\upbeta$, Under \eqref{defn Mi}, $w\cdot\upalpha$ is a root restricting to $\mathsf{M}_i\cdot \upbeta$ and this root $w\cdot\upalpha$ must contain at least one positive coefficient, since $\mathsf{M}_i\cdot \upbeta$ does. Hence  all must be positive.  In particular, all entries of $\mathsf{M}_i\cdot \upbeta$ must also be positive.
\end{proof}

The following is one of our main results. In order to obtain a unified statement, set $|\mathsf{M}_i\cdot\upbeta|$ to be the curve class obtained from $\mathsf{M}_i\cdot\upbeta$ by making every coefficient positive.

\begin{theorem}\label{thm: GV under flop}
With the notation as above, for any curve class $\upbeta\in\Achow_1(\scrX_i^+)\cong\mathfrak{h}_{\upomega_i(\scrI)}$,
\begin{align*}
n_{\upbeta,\scrX^+_i}&=
\begin{cases}
n_{\upbeta,\scrX}&\mbox{if }\upbeta\in \mathbb{Z}e_i=\mathbb{Z}\Curve_i^+\\
n_{\kern 1pt\mathsf{M}_i\cdot \upbeta, \scrX}&\mbox{else}
\end{cases}\\
&=
n_{\kern 1pt|\mathsf{M}_i\cdot \upbeta|, \scrX}
\end{align*}
\end{theorem}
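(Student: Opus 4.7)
The plan is to use Theorem~\ref{thm: produce flop}, which realises $\scrX_i^+$ via the classifying map $\upnu = w^{-1}\circ\upmu$, where $w = \ell_\scrI\ell_{\scrI+i}$. First I would choose a generic perturbation $\upmu_t$ of $\upmu$ as in Subsection~\ref{sec: perturbing target}, and observe that $\upnu_t \colonequals w^{-1}\circ\upmu_t$ is a generic perturbation of $\upnu$. Since $w$ is a linear isomorphism carrying the standard discriminant components of $\mathfrak{h}_\C/W_{\upomega_i(\scrI)}$ bijectively onto those of $\mathfrak{h}_\C/W_\scrI$, the transversality conditions \eqref{condition transversality of muprime} and the avoidance of codimension-two strata transfer from $\upmu_t$ to $\upnu_t$.

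Next, I would compute the GV invariants using \eqref{eqn: nbeta equals intersection with hyperplane}. For a restricted positive root $\upbeta \in \Achow_1(\scrX_i^+) \cong \mathfrak{h}_{\upomega_i(\scrI)}$ with lift $\upalpha \in \mathfrak{h}$, we have $n_{\upbeta,\scrX_i^+} = |\upnu_t^{-1}(\scrD_{\upalpha,\C}/W_{\upomega_i(\scrI)})|$. The key input is the identity $w\cdot W_{\upomega_i(\scrI)} = W_\scrI\cdot w$ already used in Lemma~\ref{lem: induced iso A}, which implies that $w$ descends to an isomorphism $\mathfrak{h}_\C/W_{\upomega_i(\scrI)} \to \mathfrak{h}_\C/W_\scrI$ sending $\scrD_{\upalpha,\C}/W_{\upomega_i(\scrI)}$ to $\scrD_{w\upalpha,\C}/W_\scrI$ (using that $w$ is a Cartan-preserving isometry, so $w\cdot \scrD_{\upalpha,\C} = \scrD_{w\upalpha,\C}$). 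Therefore
\[
n_{\upbeta,\scrX_i^+} \;=\; |\upmu_t^{-1}(\scrD_{w\upalpha,\C}/W_\scrI)|,
\]
and by the definition of $\mathsf{M}_i$ in \eqref{defn Mi}, we have the identity $\uppi_\scrI(w\upalpha) = \mathsf{M}_i\upbeta$ in $\mathfrak{h}_\scrI$.

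The proof then concludes by a case split driven by Corollary~\ref{cor: action of Mi pos neg}. If $\upbeta \notin \mathbb{Z}e_i$, then all entries of $\mathsf{M}_i\upbeta$ are positive, so $w\upalpha$ is itself a positive root and the right-hand side equals $n_{\mathsf{M}_i\upbeta,\scrX}$ by \eqref{eqn: nbeta equals intersection with hyperplane}. If $\upbeta = ze_i$ with $z \geq 1$, then $\mathsf{M}_i\upbeta = -ze_i$ has a negative entry, so $w\upalpha$ is a \emph{negative} root; but $\scrD_{w\upalpha,\C} = \scrD_{-w\upalpha,\C}$, and $-w\upalpha$ is the positive root whose $\uppi_\scrI$-image is $ze_i$, giving $n_{\upbeta,\scrX_i^+} = n_{ze_i,\scrX} = n_{\upbeta,\scrX}$. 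In both cases the outcome is $n_{|\mathsf{M}_i\upbeta|,\scrX}$, yielding the unified statement. Finally, when $\upbeta$ is not a restricted positive root both sides vanish, since $\mathsf{M}_i$ (being induced from the Weyl element $w$) identifies the set of restricted positive roots of $\mathfrak{h}_{\upomega_i(\scrI)}$ with that of $\mathfrak{h}_\scrI$ up to the single sign correction on $\mathbb{Z}e_i$. The main obstacle is precisely this last sign subtlety, which is why the statement is cleanest when phrased using $|\mathsf{M}_i\cdot\upbeta|$; Corollary~\ref{cor: action of Mi pos neg} pins down that the only index where a sign change can occur is $i$, and so the absolute-value convention suffices.
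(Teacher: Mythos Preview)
Your proposal is correct and follows essentially the same route as the paper: perturb $\upmu$ to $\upmu_t$, set $\upnu_t = w^{-1}\circ\upmu_t$, use the identity $w\cdot\scrD_{\upalpha,\C}/W_{\upomega_i(\scrI)} = \scrD_{w\upalpha,\C}/W_\scrI$ together with \eqref{defn Mi} to rewrite $n_{\upbeta,\scrX_i^+}$ as an intersection count for $\upmu_t$, and then invoke Corollary~\ref{cor: action of Mi pos neg} to handle the sign on $w\upalpha$ in the two cases. Your extra remarks on transferring transversality and on the vanishing when $\upbeta$ is not a restricted root are welcome clarifications, but do not alter the argument.
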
 
\begin{proof}
With respect to the notation in \eqref{create flop comm diagram}, perturbing $\upmu$ to $\upmu_t$ gives, by composition, a perturbation of $\upnu$ to $\upnu_t$.

Set $w=\ell_{\scrI}\ell_{\scrI+i}$. Then for any positive root $\upalpha$ for which $\uppi_{\upomega_i(\scrI)}(\upalpha)=\upbeta$, 
\begin{align*}
n_{\upbeta,\scrX^+_i}&=|\upnu_t^{-1}(\scrD_{\upalpha,\C}/W_{\omega_i(\scrI)})|\tag{by \eqref{eqn: nbeta equals intersection with hyperplane} applied to $\scrX_i^+$}\\
&=|\upmu_t^{-1}(\scrD_{w\cdot\upalpha,\C}/W_\scrI)|.\tag{since $\upnu=(w \cdot)^{-1} \circ\upmu$}
\end{align*}
Now  by \eqref{defn Mi} we have $\uppi_{\scrI}(w\cdot \upalpha)=\mathsf{M}_i\circ\uppi_{\upomega_i(\scrI)}(\upalpha)=\mathsf{M}_i\cdot \upbeta$ and so $w\cdot\upalpha$ is a lift of $\mathsf{M}_i\cdot\upbeta$, albeit not necessarily a positive one.  

\noindent
\emph{Case 1.} If $\upbeta\notin\mathbb{Z}\Curve^+_i$, then by Corollary~\ref{cor: action of Mi pos neg} all entries of $\mathsf{M}_i\cdot\upbeta$ are positive, and further as argued in the proof, $w\cdot\upalpha$ is a positive root restricting to $\mathsf{M}_i\cdot \upbeta$. \eqref{eqn: nbeta equals intersection with hyperplane} then implies that $|\upmu_t^{-1}(\scrD_{w\cdot\upalpha,\C}/W_\scrI)|=n_{\kern 1pt\mathsf{M}_i\cdot \upbeta, \scrX}$. 

\noindent
\emph{Case 2.} If $\upbeta\in\mathbb{Z}\Curve^+_i$, then by  Corollary~\ref{cor: action of Mi pos neg}, $\mathsf{M}_i\cdot\upbeta=-\upbeta$.  Arguing as above, it follows that  $w\cdot\upalpha$ is negative root restricting to $\mathsf{M}_i\cdot\upbeta=-\upbeta$, and thus $-w\cdot\upalpha$ is positive root restricting to $\upbeta$. But negating a root does not affect the hyperplane, and combining this fact with \eqref{eqn: nbeta equals intersection with hyperplane} it follows that
\[
|\upmu_t^{-1}(\scrD_{w\cdot\upalpha,\C}/W_\scrI)|=|\upmu_t^{-1}(\scrD_{-w\cdot\upalpha,\C}/W_\scrI)|=n_{\upbeta,\scrX}.
\]
This covers both cases.  For the final equality, note in case 1 that $|\mathsf{M}_i\cdot\upbeta|=\mathsf{M}_i\cdot\upbeta$ since all coefficients are already positive, and in case 2 that $|\mathsf{M}_i\cdot\upbeta|=|-\upbeta|=\upbeta$.
\end{proof}

\begin{example}\label{ex: running example GV after flop}
Consider the running Example~\ref{example: intro}.  Then after flop of the right pink curve, by Theorem~\ref{thm: produce flop} and Example~\ref{ex: changing dual graph} we obtain $\upomega_i(\scrI)=\Eeight{B}{P}{B}{Or}{B}{B}{B}{B}$. Hence the restricted roots, and thus curve classes giving nonzero GV invariants, on the flopped space $\scrX_i^+$ are as follows, where the hyperplanes are drawn in $\Uptheta_{\upomega_i(\scrI)}$.
\[
\begin{array}{cccc}
\begin{array}{c}
\begin{tikzpicture}[scale=0.5]
\draw[->,densely dotted] (180:2cm)--(0:2cm);
\node at (0:2.5) {$\scriptstyle x$};
\draw[->,densely dotted] (-90:2cm)--(90:2cm);
\node at (90:2.5) {$\scriptstyle y$};
\end{tikzpicture}
\end{array}
&
\begin{array}{c}
\begin{tikzpicture}[scale=1]
\draw[line width=\mythick mm,Pink] (180:2cm)--(0:2cm);
\node at (180:2.2) {$\scriptstyle 1$};
\draw[line width=\mythick mm,Pink] (135:2cm)--(-45:2cm);
\node at (135:2.2) {$\scriptstyle 3$};
\draw[line width=\mythick mm, Grey] (126.87:2cm)--(-53.13:2cm);
\node at (126.87:2.2) {$\scriptstyle 1$};
\draw[line width=\mythick mm, Green] (123.69:2cm)--(-56.31:2cm);
\node at (123.69:2.2) {$\scriptstyle 1$};
\draw[line width=\mythick mm, Blue] (116.55:2cm)--(-63.45:2cm);
\node at (116.55:2.2) {$\scriptstyle 2$};
\draw[line width=\mythick mm,Orange] (90:2cm)--(-90:2cm);
\node at (90:2.2) {$\scriptstyle 1$};
\end{tikzpicture}
\end{array}&
\begin{array}{c}
\begin{tabular}{ccc}
\toprule
Restricted Root&\\
\midrule
$01$&$\tikz\draw[line width=\mythick mm, Pink] (0,0) -- (0.25,0);$\\
$11,22,33$&$\tikz\draw[line width=\mythick mm, Pink] (0,0) -- (0.25,0);$\\
$43$&$\tikz\draw[line width=\mythick mm, Grey] (0,0) -- (0.25,0);$\\
$32$&$\tikz\draw[line width=\mythick mm, Green] (0,0) -- (0.25,0);$\\
$21,42$&$\tikz\draw[line width=\mythick mm,Blue] (0,0) -- (0.25,0);$\\
$10$&$\tikz\draw[line width=\mythick mm, Orange] (0,-0.15) -- (0,0.15);$\\
\bottomrule
\end{tabular}
\end{array}
\end{array}
\]
Write $1$ for the leftmost pink node, $2$ for the orange node, and $2'$ for the rightmost pink node (in $\scrI$).  Under this wall crossing $\ell_{\scrI}\ell_{\scrI+i}$ is very large, however the morphism
\[
\mathsf{M}_i\colon\mathfrak{h}_{\upomega_i(\scrI)}\to\mathfrak{h}_{\scrI}
\]
is easily described: in the notation of Lemma~\ref{lem: action of Mi}, $\uplambda_1=1$ and thus $\mathsf{M}_i$ sends $\upmu_1e_1+\upmu_2e_2\mapsto \upmu_1e_1+(\upmu_1-\upmu_2)e_{2'}$.  Under the dual transformation between the hyperplane arrangements in Example~\ref{example: intro} and here, the pictures are drawn so that hyperplanes are sent to hyperplanes in such a way that the colours are preserved.

Indeed, $\mathsf{M}_i$ sends $01\mapsto0-\!1$, with all other restricted roots being permuted; e.g.\ $31\mapsto 32$. In particular, by Theorem~\ref{thm: GV under flop} the GV invariants on $\scrX_i^+$ can be obtained from the GV invariants on $\scrX$ as follows
\[
\begin{tabular}{ccccc}
\toprule
GV on $\scrX^+_i$&GV on $\scrX$&\\
\midrule
$01$&$01$&$\tikz\draw[line width=\mythick mm, Pink] (0,0) -- (0.25,0);$\\
$10$&$11$&$\tikz\draw[line width=\mythick mm, Orange] (0,0) -- (0.25,0);$\\
$21,42$&$21, 42$&$\tikz\draw[line width=\mythick mm, Blue] (0,0) -- (0.25,0);$\\
$32$&$31$&$\tikz\draw[line width=\mythick mm, Green] (0,0) -- (0.25,0);$\\
$43$&$41$&$\tikz\draw[line width=\mythick mm, Grey] (0,0) -- (0.25,0);$\\
$11,22,33$&$10, 20, 30$&$\tikz\draw[line width=\mythick mm, Pink] (0,0) -- (0.25,0);$\\
\bottomrule
\end{tabular}
\] 
\end{example}

\begin{remark}\label{rem: movable via alg geom}
As explained in the introduction, the finite arrangement $\scrH_{\scrI}$ equals the movable cone. The multiplicities of the restricted roots are assigned to each wall, and this enhancement is required in order to describe the curve-counting invariants.  It is possible, albeit not a priori obvious, to enhance the movable cone without Dynkin combinatorics.  Given a chamber corresponding to some crepant resolution $\scrX^\dag \to\Spec\scrR$ say, then the multiplicities on the walls of that chamber turn out to correspond to the lengths of all the individual single-curve contractions obtained from $\scrX^\dag$. The issue with this method is that, whilst it explains walls, it does not explain \emph{hyperplanes}: it is not so clear that \emph{every} chamber touching the hyperplane containing the said wall should be enriched with the same scheme-theoretic length.  This geometric fact falls out from our approach. 
\end{remark}

\subsection{Tracking fundamental regions}\label{sec:track fund}
The previous subsection tracked GV invariants from $\scrX$ to $\scrX^+_i$.  As with the movable cone, it is possible to fix $\scrX$ and track all other crepant resolutions back to $\scrX$.   

As notation, recall that the fixed $\scrX\to\Spec\scrR$ has an associated $\Uptheta_\scrI$ in Notation~\ref{notation: subset stuff}, and recall from \eqref{defn Mi} that there is a map $\mathsf{M}_i^{-1}\colon\mathfrak{h}_{\scrI}\to\mathfrak{h}_{\upomega_i(\scrI)}$. Write
\[
\mathsf{N}_i\colon \Uptheta_{\upomega_i(\scrI)}\to\Uptheta_\scrI
\] 
for the dual.  Below, $\Uptheta_\scrI$ will be temporarily be written $\Uptheta_{\scrX}$, to allow for the flexibility of considering another crepant resolution $\scrY\to\Spec\scrR$ which has  associated $\Uptheta_{\scrY}$. 
\begin{definition}
Let  $\scrY\to\Spec\scrR$ be a crepant resolution.  Consider a chain of flops, each flopping a single irreducible curve, that links $\scrY$ to $\scrX$, and the resulting maps
\[
\Uptheta_{\scrY}\xrightarrow{\mathsf{N}_{i_1}}\hdots\xrightarrow{\mathsf{N}_{i_t}}\Uptheta_\scrX.
\]
The composition will be called the comparison map, and will be written $\mathsf{N}\colon\Uptheta_\scrY\to\Uptheta_\scrX$.
\end{definition}
By \cite[4.8]{HW2} the comparison map $\mathsf{N}$ is independent of the choice of chain of flops.

\begin{definition}
Given a crepant resolution $\scrY\to\Spec\scrR$, the fundamental region $\Fund_\scrY$ of $\Uptheta_\scrY$ is defined as the intersection of the infinite hyperplane arrangement inside $\Uptheta_\scrY$ with the unit box $\{ (\upvartheta_i) \mid 0\leq \upvartheta_i\leq 1\mbox{ for all }i\}$.
\end{definition}
\begin{prop}
For any crepant resolution $\scrY\to\Spec\scrR$, $\mathsf{N}(\Fund_{\scrY})$ generates $\Uptheta_\scrI$ via translation.  Furthermore two $\mathsf{N}(\Fund_{\scrX_1})$ and  $\mathsf{N}(\Fund_{\scrX_2})$ share a codimension one wall if and only if $\scrX_1$ and $\scrX_2$ are connected by a flop at a single curve.
\end{prop}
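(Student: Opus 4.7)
The plan is to show that the comparison map $\mathsf{N} \colon \Uptheta_\scrY \to \Uptheta_\scrI$ is a unimodular integer linear isomorphism carrying the infinite arrangement in $\Uptheta_\scrY$ bijectively onto $\scrH^\aff_\scrI \subseteq \Uptheta_\scrI$. By Lemma~\ref{lem: action of Mi} each $\mathsf{M}_j$ is represented by an integer matrix of determinant $\pm 1$, so each $\mathsf{N}_j$ is unimodular, and hence so is the composition $\mathsf{N}$. The effect on the arrangement is controlled by the dual action: a hyperplane $\{\langle \upbeta, -\rangle = z\}$ in $\Uptheta_\scrY$ is sent to $\{\langle \mathsf{M}\upbeta, -\rangle = z\}$ in $\Uptheta_\scrI$, and Corollary~\ref{cor: action of Mi pos neg} applied iteratively ensures that $\mathsf{M}$ permutes restricted positive roots up to sign, so both arrangements coincide as collections of affine hyperplanes.

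With this in hand, the first statement is immediate. Writing the fundamental region as the intersection of the infinite arrangement with the unit box, we have $\mathsf{N}(\Fund_\scrY) = \scrH^\aff_\scrI \cap \mathsf{N}([0,1]^{|\scrIc|})$. Since $\mathsf{N}$ is unimodular, $\mathsf{N}([0,1]^{|\scrIc|})$ is a fundamental parallelepiped for the integer lattice in $\Uptheta_\scrI$, and its integer translates tile $\Uptheta_\scrI$. Intersecting with the translation-invariant $\scrH^\aff_\scrI$ then shows that integer translates of $\mathsf{N}(\Fund_\scrY)$ recover the entire arrangement.

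For the second statement, iterating the wall-crossing construction reduces the forward direction (after applying $\mathsf{N}_{\scrX_1}^{-1}$ and a suitable integer translation) to the single-flop case $\scrX_1 = \scrX$ and $\scrX_2 = \scrX^+_i$. Using Lemma~\ref{lem: action of Mi}, $\mathsf{M}_i^{-1}$ acts on $\mathfrak{h}_\scrI$ by $e_i \mapsto -e_i$ and $e_k \mapsto e_k + \uplambda_k e_i$ for $k \neq i$; dualising shows that $\mathsf{N}_i$ fixes the hyperplane $\{\upvartheta_i = 0\}$ pointwise, so $\mathsf{N}_i([0,1]^{|\scrIc|})$ and $[0,1]^{|\scrIc|}$ share the codimension one face $\{\upvartheta_i = 0\} \cap [0,1]^{|\scrIc|}$, and intersecting with $\scrH^\aff_\scrI$ produces the shared codim one wall. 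Conversely, any shared codim one wall between two fundamental regions can be transported via the unimodular translation action to a coordinate face $\{\upvartheta_j \in \{0,1\}\}$ of the unit box, and Theorem~\ref{thm: flop by sim res} identifies this adjacency with a single flop of $\Curve_j$. The main obstacle is verifying that $\mathsf{N}_i$ fixes $\{\upvartheta_i = 0\}$ pointwise, which hinges on unpacking the explicit form of $\mathsf{M}_i^{-1}$ from Lemma~\ref{lem: action of Mi}; the rest reduces to unimodular linear algebra and previously established wall-crossing combinatorics.
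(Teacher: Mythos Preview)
Your argument is essentially correct and is more self-contained than the paper's. The paper establishes the first statement by observing that $\Fund_\scrY$ generates the arrangement in $\Uptheta_\scrY$ (since the coordinate hyperplanes lie in $\scrH_{\scrI_\scrY}$ and all others are obtained by translation), and then cites \cite[Section~9]{IyamaWemyssTits} for the fact that $\mathsf{N}$ carries the infinite arrangement in $\Uptheta_\scrY$ to $\scrH^\aff_\scrI$. You instead derive this arrangement-preservation directly from Corollary~\ref{cor: action of Mi pos neg} (which shows that each $\mathsf{M}_i$ permutes restricted roots up to sign), and then use unimodularity of $\mathsf{N}$ to tile $\Uptheta_\scrI$ by translates of $\mathsf{N}([0,1]^{|\scrIc|})$. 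Both routes are valid; yours avoids the external citation at the cost of a short linear-algebra computation.

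For the second statement, the paper simply observes that any shared codimension one wall must pass through the origin and hence lie in the finite arrangement $\scrH_\scrI$, then defers the resulting movable-cone statement to \cite{HomMMP}. Your explicit verification that $\mathsf{N}_i$ fixes $\{\upvartheta_i=0\}$ pointwise (which follows cleanly from the matrix form of $\mathsf{M}_i^{-1}$) is a nice direct proof of the forward direction. Two small points on your converse: the phrase ``unimodular translation action'' is misleading, since no integer translation is needed --- all parallelepipeds $\mathsf{N}_k([0,1]^{|\scrIc|})$ already share the origin as a vertex, so after applying $\mathsf{N}_{\scrX_1}^{-1}$ the shared wall is automatically a face of $[0,1]^{|\scrIc|}$ through the origin. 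More substantively, Theorem~\ref{thm: flop by sim res} on its own only tells you that the flop produces an adjacent region; to conclude that the \emph{only} region adjacent along $\{\upvartheta_j=0\}$ is that of the flop, you need the bijection between crepant resolutions and chambers of $\scrH_\scrI$. This is contained in part~(2) of Theorem~\ref{thm: flop by sim res} together with the chamber indexing by $\mathsf{Cham}(\Delta,\scrI)$, but you should invoke it explicitly. The paper's proof has the same gap in detail, resolved by the citation to \cite{HomMMP}.
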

\begin{proof}
Since the axes belong to the finite hyperplane arrangement in $\Uptheta_{\scrY}$, and the definition of the infinite arrangement involves translating this finite collection of hyperplanes over $\mathbb{Z}$ or at worst $\tfrac{1}{k}\mathbb{Z}$ (see Subsections~\ref{sec: GW intro} and \ref{sec: hyperplane arrangements}), it is clear that the fundamental region $\Fund_\scrY$ generates the arrangement in $\Uptheta_\scrY$.  The first statement then follows, since $\mathsf{N}$ is known to preserve the infinite arrangements \cite[Section~9]{IyamaWemyssTits}.  Since the only codimension one wall that the fundamental regions can share belong to the finite arrangement, the last statement is really a statement on the movable cone, which is e.g.\ \cite[Sections~5--6]{HomMMP}. 
\end{proof}

\begin{example}
Write $\scrY\to\Spec\scrR$ for the crepant resolution obtained after flop in Example~\ref{ex: running example GV after flop}. Then the region $\mathsf{N}(\Fund_{\scrY})$ is illustrated below, where for clarity we have illustrated the images of the $x$ and $y$ co-ordinates in Example~\ref{ex: running example GV after flop} under the map $\mathsf{N}$.
\[
\begin{array}{cc}
\begin{array}{c}
\begin{tikzpicture}[scale=0.5]
\draw[->,densely dotted] (0,-0.5)--($(0,-0.5)+(52.5:4.5cm)$);
\node at ($(0,-0.5)+(60:4cm)$) {$\scriptstyle \mathsf{N}x$};
\node at (0,-0.6){};
\draw[->,densely dotted] (0,-0.5)--($(0,-0.5)+(33:3.25cm)$);
\node at ($(0,-0.5)+(40:3.25cm)$)  {$\scriptstyle \mathsf{N}y$};
\end{tikzpicture}
\end{array}
&
\begin{array}{c}
\includegraphics[angle=0,width=7.5cm,height=4cm]{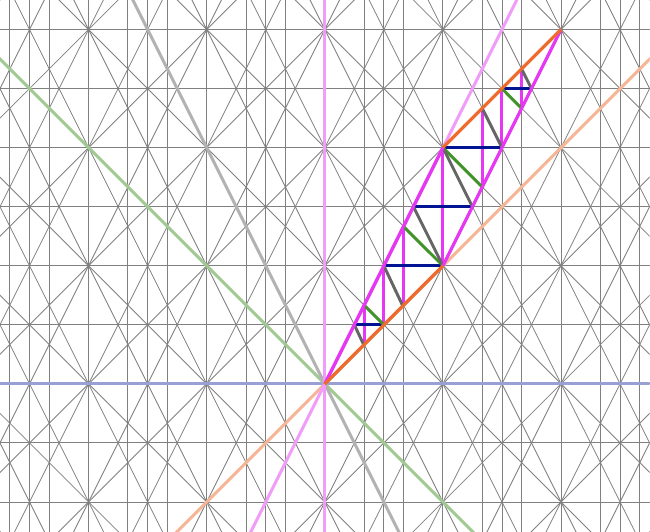}
\end{array}
\end{array}
\]
It is visually clear that both $\Fund_{\scrX}$ in \eqref{running example infinite fund region 1} and $\mathsf{N}(\Fund_{\scrY})$ above individually generate $\scrH_{\scrI}^{\aff}$, via translation, and that $\Fund_{\scrX}$  and $\mathsf{N}(\Fund_{\scrY})$ are different.
\end{example}

\begin{remark}
The above example gives a visual proof of the Crepant Transformation Conjecture of Subsection~\ref{sec:CTC proof} below. The regions $\Fund_{\scrX}$  and $\mathsf{N}(\Fund_{\scrY})$ are different. But they generate the same object, namely $\scrH_{\scrI}^{\aff}$, which by Corollary~\ref{cor: quantum is hyper} is the pole locus of the GW quantum potential. Thus, although the curve invariants of $\scrX$ and $\scrY$, captured in the fundamental regions, are technically different, after a change in variables (namely $\mathsf{N}$) they can be compared, where they generate the same object. 
\end{remark}

\begin{remark}\label{rem: matrix Mk}
The matrix $\mathsf{N}_i$ appears via moduli tracking in the HomMMP \cite[5.4]{HomMMP}, and via the K-theory of contraction algebras \cite[2.4]{AugustWemyss}. In contrast, $\mathsf{M}_i$ from \eqref{defn Mi} is the dual, and it arises from the change in dimension vector \cite[5.4]{HomMMP}, or in the K-theory of projective modules \cite[3.2]{HW2}.  For more details, see \cite[2.4]{AugustWemyss} and references therein.
\end{remark}

\subsection{Crepant Transformation Conjecture} \label{sec:CTC proof}
We make no attempt at a comprehensive summary of the Crepant Transformation Conjecture (CTC), and instead refer the reader to \cite{CoatesRuan, CoatesIritaniJiang, BryanGraber,YPLeeLectures}. For a pair of smooth varieties related via a sequence of flops, the conjecture asserts that their quantum potentials should coincide, under a suitable identification of (co)homologies and analytic continuation in the Novikov parameters. There has been extensive work on this conjecture within both algebraic and symplectic geometry  \cite{LiRuan, McLean, LLWMotives, LLW1, LLW2, LLQW3}.

Here we prove the CTC for germs of isolated $3$-fold flops, as a direct application of the expression for the quantum potential in Theorem~\ref{thm: structure of quantum potential} together with the construction of flops via simultaneous partial resolutions in Theorem~\ref{thm: flop by sim res}.  This gives the first algebraic-geometric proof of the CTC for flops of arbitrary type (for recent symplectic developments, see \cite{McLean}).

\medskip
As before, consider a curve $\Curve_i \subseteq \scrX$ and let $\scrX_i^+$ be the flop of $\scrX$ at $\Curve_i$. Recall that the following vector spaces are based by the sets of exceptional curves
\[ 
\mathfrak{h}_{\scrI,\C}= \Achow_1(\scrX)_{\C}= \langle \Curve_j \mid j \in \scrIc \rangle_{\C}, \qquad \mathfrak{h}_{\upomega_i(\scrI),\C} = \Achow_1(\scrX_i^+)_{\C} = \langle \Curve_j,\Curve_i^+\mid j\notin\scrI+i  \rangle_{\C}. 
\]
where as in Lemma~\ref{lem: action of Mi} we abuse notation by denoting the flopped curve $\Curve_i^+$ instead of $\Curve_{\raisemath{10pt}{\upiota_{\scrI+i}(i)}}^+$. As explained in Subsection~\ref{sec: GV under flop}, there is an explicit transformation matrix
\[ \mathsf{M}_i \colon \Achow_1(\scrX_i^+)_{\C} \to \Achow_1(\scrX)_{\C}.\]
This is the complexification of the matrix $\mathsf{M}_i$ from earlier, but we use the same symbol. Let $\mathsf{N}_i$ be the matrix dual to $\mathsf{M}_i^{-1}$ which can be viewed as a linear map
\[ 
\mathsf{N}_i \colon \mathrm{H}^2(\scrX_i^+;\C) \to \mathrm{H}^2(\scrX;\C) 
\]
with the property that $\Nsf_i \upgamma \cdot \upbeta = \upgamma \cdot \Msf_i^{-1} \upbeta$ for $\upgamma \in \mathrm{H}^2(\scrX_i^+;\C)$ and $\upbeta \in \Achow_1(\scrX)_{\C}$. We  notate the Novikov co-ordinates on the parameter spaces for the quantum potentials by
\[
\begin{array}{ll}
\{ \qsf_j \mid j \in \scrIc\} &\text{on $\Achow_1(\scrX)_{\C}$}, \\
\{ \rsf_j \mid j \in \scrIc \} &  \text{on $\Achow_1(\scrX_i^+)_{\C}$.}
\end{array}
\]
The matrix $\mathsf{M}_i^{-1}$ defines a monomial co-ordinate transformation relating the two sets of Novikov parameters. Writing monomials as
\[ \qsf^{\upbeta} \colonequals \prod_{j \in \scrIc} \qsf_j^{m_j},\qquad \rsf^{\upbeta} \colonequals \prod_{j \in \scrIc} \rsf_j^{m_j}\]
this is given by the equation
\begin{equation} \label{eqn: transformation of Novikov parameters} 
\qsf^{\upbeta} = \rsf^{\Msf_i^{-1} \upbeta}.
\end{equation}
By Lemma~\ref{lem: action of Mi}, $\qsf_i = \rsf_i^{-1}$ whereas every other $\qsf_{j}$ is a monomial in the $\rsf_j$ with non-negative coefficients.

Lastly, recall the quantum potentials of $\scrX$ and $\scrX_i^+$ from Subsection~\ref{sec: quantum potential}, which to ease notation will be written $\Phi$ and $\Phi^+$, namely
\begin{align*} \Phi_{\qsf}(\upgamma_1,\upgamma_2,\upgamma_3) & \colonequals \Phi_{\qsf}^{\scrX}(\upgamma_1,\upgamma_2,\upgamma_3), \\
\Phi_{\rsf}^+(\upgamma_1,\upgamma_2,\upgamma_3) & \colonequals \Phi_{\rsf}^{\scrX_i^+}(\upgamma_1,\upgamma_2,\upgamma_3).
\end{align*}
The equation \eqref{eqn: transformation of Novikov parameters} will be used to express the quantum potential of $\scrX$ in the variables $\rsf$, and this will be denoted $\Phi_{\rsf}(\upgamma_1,\upgamma_2,\upgamma_3)$.

\begin{corollary}[Crepant Transformation Conjecture] \label{thm: CTC} On the $\rsf$ parameter space, the quantum potentials of $\scrX$ and $\scrX_i^+$ coincide, up to the following explicit term which does not depend on the Novikov parameters
\begin{equation} \label{eqn: CTC} \Phi^{+}_{\mathsf{r}}\big(\upgamma_1, \upgamma_2,\upgamma_3\big) - \Phi_{\mathsf{r}}(\mathsf{N}_i  \upgamma_1,\mathsf{N}_i \upgamma_2,\mathsf{N}_i \upgamma_3) = -(\upgamma_1 \cdot \Curve_i^+)(\upgamma_2 \cdot \Curve_i^+)(\upgamma_3\cdot \Curve_i^+) \sum_{k \geq 1} k^3 n_{k\Curve_i,\scrX}. 
\end{equation}
\end{corollary}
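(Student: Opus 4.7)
The plan is to prove the formula by expanding both sides using the explicit rational-function expression for the quantum potential from Theorem~\ref{thm: structure of quantum potential}, then applying the GV transformation rule of Theorem~\ref{thm: GV under flop} to match terms. First I would split the sum defining $\Phi^+_{\mathsf{r}}$ into the contribution from curve classes $\upbeta = k\Curve_i^+$ with $k \geq 1$ and the contribution from all remaining non-zero restricted roots $\upbeta \in \Achow_1(\scrX_i^+)$ with $\upbeta \notin \mathbb{Z}\Curve_i^+$. I would split $\Phi_{\mathsf{r}}(\Nsf_i\upgamma_1,\Nsf_i\upgamma_2,\Nsf_i\upgamma_3)$ symmetrically into contributions from $\updelta = k\Curve_i$ and $\updelta \notin \mathbb{Z}\Curve_i$.

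For the ``non-proportional'' parts, the substitution $\updelta = \Msf_i\upbeta$ (equivalently $\upbeta = \Msf_i^{-1}\updelta$) bijects restricted roots $\updelta \notin \mathbb{Z}\Curve_i$ of $\scrX$ with restricted roots $\upbeta \notin \mathbb{Z}\Curve_i^+$ of $\scrX_i^+$ (by Corollary~\ref{cor: action of Mi pos neg} applied to both $\Msf_i$ and $\Msf_i^{-1}$). The defining duality $\Nsf_i\upgamma \cdot \updelta = \upgamma \cdot \Msf_i^{-1}\updelta$ gives $\Nsf_i\upgamma_j \cdot \Msf_i\upbeta = \upgamma_j \cdot \upbeta$, the Novikov change of variables \eqref{eqn: transformation of Novikov parameters} gives $\mathsf{q}^{\Msf_i\upbeta} = \mathsf{r}^{\upbeta}$, and the ``else'' case of Theorem~\ref{thm: GV under flop} gives $n_{\Msf_i\upbeta, \scrX} = n_{\upbeta, \scrX_i^+}$. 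Combining these identifications, the non-proportional parts of $\Phi^+_{\mathsf{r}}$ and $\Phi_{\mathsf{r}} \circ \Nsf_i$ match term by term and cancel in the difference.

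For the ``proportional'' parts, one computes both contributions explicitly. In $\Phi^+_{\mathsf{r}}$ the term for $\upbeta = k\Curve_i^+$ is
\[
n_{k\Curve_i^+, \scrX_i^+} \, k^3 \prod_{j=1}^3 (\upgamma_j \cdot \Curve_i^+) \cdot \frac{\mathsf{r}_i^k}{1-\mathsf{r}_i^k}.
\]
In $\Phi_{\mathsf{r}} \circ \Nsf_i$ the term for $\updelta = k\Curve_i$ uses $\Msf_i^{-1}(k\Curve_i) = -k\Curve_i^+$ (Lemma~\ref{lem: action of Mi}), so $\Nsf_i\upgamma_j \cdot k\Curve_i = -k(\upgamma_j \cdot \Curve_i^+)$ and $\mathsf{q}^{k\Curve_i} = \mathsf{r}^{-k\Curve_i^+} = \mathsf{r}_i^{-k}$. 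Applying Theorem~\ref{thm: GV under flop} in the proportional case to identify $n_{k\Curve_i,\scrX}$ with $n_{k\Curve_i^+,\scrX_i^+}$, and using the elementary identity
\[
\frac{\mathsf{r}_i^k}{1-\mathsf{r}_i^k} - \frac{-1 \cdot \mathsf{r}_i^{-k}}{1-\mathsf{r}_i^{-k}} \cdot (-1) = \frac{\mathsf{r}_i^k}{1-\mathsf{r}_i^k} + \frac{1}{1-\mathsf{r}_i^k}\cdot(-1) = -1,
\]
(where the $(-1)^3 = -1$ absorbs the cube of the $-k$) the difference between the two proportional contributions telescopes to $-k^3 n_{k\Curve_i, \scrX}\prod_j(\upgamma_j \cdot \Curve_i^+)$, independently of $\mathsf{r}_i$. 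Summing over $k \geq 1$ yields exactly \eqref{eqn: CTC}.

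The main obstacle, though largely already addressed by the setup, is ensuring that the analytic continuation afforded by Theorem~\ref{thm: structure of quantum potential} legitimately allows the substitution $\mathsf{q}^{k\Curve_i} = \mathsf{r}_i^{-k}$: the original power series for $\Phi$ converges only for $|\mathsf{q}_j| < 1$, whereas after the change of variables this forces $|\mathsf{r}_i| > 1$. This is why the rational-function expression of Theorem~\ref{thm: structure of quantum potential}, rather than the formal series \eqref{eqn: quantum potential}, is essential: the identity \eqref{eqn: CTC} is an equality of meromorphic functions on the common parameter space, not of formal series, and the apparent sign asymmetry between $\mathsf{r}_i^k/(1-\mathsf{r}_i^k)$ and $\mathsf{r}_i^{-k}/(1-\mathsf{r}_i^{-k})$ — which generates the correction term — only makes sense after this analytic continuation. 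The remaining bookkeeping is routine.
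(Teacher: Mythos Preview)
Your proposal is correct and follows essentially the same approach as the paper's proof: both split the quantum potentials into contributions from curve classes proportional to $\Curve_i$ (resp.\ $\Curve_i^+$) and those not, match the non-proportional parts term by term via the bijection $\upbeta \mapsto \Msf_i\upbeta$ together with Theorem~\ref{thm: GV under flop} and the duality $\Nsf_i\upgamma \cdot \Msf_i\upbeta = \upgamma\cdot\upbeta$, and then compute the difference of the proportional parts directly using $\qsf_i = \rsf_i^{-1}$ and the identity $\tfrac{\rsf_i^k}{1-\rsf_i^k} - \tfrac{1}{1-\rsf_i^k} = -1$. Your closing remark about analytic continuation is also on point and mirrors the paper's own commentary.
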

\noindent
More precisely the pole loci are transformed into each other via \eqref{eqn: transformation of Novikov parameters}, and away from these the analytic continuations constructed in Theorem~\ref{thm: structure of quantum potential} coincide.

\begin{remark} \label{rmk: CTC non-compact} The quantum potentials of $\scrX$ and $\scrX_i^+$ have no constant terms in their respective Novikov parameters, due to the absence of a perfect pairing on cohomology (see Remark~\ref{rmk: no algebra}). However, the change of parameters \eqref{eqn: transformation of Novikov parameters} introduces constant terms into $\Phi_{\mathsf{r}}(\upgamma_1,\upgamma_2,\upgamma_3)$, which form the right-hand side of \eqref{eqn: CTC}. It follows that the quantum potentials agree once these extraneous constant terms are removed from $\Phi_{\rsf}(\upgamma_1,\upgamma_2,\upgamma_3)$. In particular, $\Phi^+$ can be effectively reconstructed from $\Phi$. In situations where an ordinary cup product can be defined, the additional terms on the right-hand side account for the defect between the cup products on $\scrX$ and $\scrX_i^+$, see e.g. \cite[Subsection~4.3 and Equation~(4.4)]{MorrisonKaehler}.
\end{remark}

\begin{remark} 
The expansion points for the quantum potentials differ, as
\[ (\rsf_j)_{j \in \scrIc} = (0,\ldots,0) \ \Leftrightarrow \ (\qsf_j)_{j \in \scrIc} = (0,\ldots,0,\infty,0,\ldots,0)\]
with $\infty$ in the $i$th position. Thus, the term $\Phi_{\rsf}(\Nsf_i \upgamma_1,\Nsf_i \upgamma_2,\Nsf_i \upgamma_3)$ is analytically continued from $\qsf_i=0$ to $\qsf_i=\infty$, the analytic continuation occurring precisely in the Novikov parameter corresponding to the flopped curve.
\end{remark}

\begin{proof}[Proof of \ref{thm: CTC}] 
We explicitly match both sides, using our knowledge of the structure of the quantum potentials (Theorem~\ref{thm: structure of quantum potential}) and the behaviour of the GV invariants under the flop (Theorem~\ref{thm: GV under flop}).

Separating curve classes according to whether or not they are a multiple of $\Curve_i$,  the quantum potential for $\scrX$ may be written as the sum of contributions
\begin{align*} 
\Phi_{\qsf}(\Nsf_i\upgamma_1,\Nsf_i\upgamma_2,\Nsf_i\upgamma_3) & = \mathsf{G}_{\qsf}(\Nsf_i \upgamma_1,\Nsf_i \upgamma_2, \Nsf_i \upgamma_3) + \mathsf{H}_{\qsf}(\Nsf_i \upgamma_1,\Nsf_i \upgamma_2, \Nsf_i \upgamma_3) 
\end{align*}
where
\begin{align*}
\mathsf{G}_{\qsf}(\Nsf_i \upgamma_1,\Nsf_i \upgamma_2, \Nsf_i \upgamma_3) & =\quad\, \sum_{k \geq 1} n_{k\Curve_i,\scrX} \, (\Nsf_i\upgamma_1 \cdot k\Curve_i)(\Nsf_i\upgamma_2 \cdot k\Curve_i) (\Nsf_i\upgamma_3 \cdot k\Curve_i) \dfrac{\qsf_i^k}{1-\qsf_i^k} \\ 
\mathsf{H}_{\qsf}(\Nsf_i \upgamma_1,\Nsf_i \upgamma_2, \Nsf_i \upgamma_3) & = \sum_{\substack{\upbeta \in \Achow_1(\scrX) \\ \upbeta \neq k\Curve_i}} n_{\upbeta,\scrX} \, (\Nsf_i\upgamma_1\cdot\upbeta)(\Nsf_i\upgamma_2\cdot\upbeta)(\Nsf_i\upgamma_3\cdot\upbeta) \dfrac{\qsf^\upbeta}{1-\qsf^\upbeta}. 
\end{align*}
Similarly, write the quantum potential of $\scrX_i^+$ as
\begin{align*}
\Phi^{+}_{\rsf}(\upgamma_1,\upgamma_2,\upgamma_3) & = \mathsf{G}^{+}_{\rsf}(\upgamma_1,\upgamma_2,\upgamma_3) +\mathsf{H}^{+}_{\rsf}(\upgamma_1,\upgamma_2,\upgamma_3),
\end{align*}
where
\begin{align*} 
\mathsf{G}^{+}_{\rsf}(\upgamma_1,\upgamma_2,\upgamma_3) & =  \quad\, \sum_{k \geq 1} n_{k\Curve_i^+,\scrX_i^+} \, (\upgamma_1 \cdot k\Curve_i^+)(\upgamma_2 \cdot k\Curve_i^+) (\upgamma_3 \cdot k\Curve_i^+) \dfrac{\rsf_i^{k}}{1-\rsf_i^{k}} \\ 
\mathsf{H}^{+}_{\rsf}(\upgamma_1,\upgamma_2,\upgamma_3) & =  \sum_{\substack{\upbeta \in \Achow_1(\scrX) \\ \upbeta \neq k\Curve_i}} n_{\upbeta,\scrX_i^+} \, (\upgamma_1\cdot\upbeta)(\upgamma_2\cdot\upbeta)(\upgamma_3\cdot\upbeta) \dfrac{\rsf^{\upbeta}}{1-\rsf^{\upbeta}}. 
\end{align*}
We begin with the $\mathsf{G}$ terms. Using \eqref{eqn: transformation of Novikov parameters} to change variables from $\qsf$ to $\rsf$ gives
\begin{align*} 
\mathsf{G}_{\rsf}(\Nsf_i\upgamma_1,\Nsf_i\upgamma_2,\Nsf_i\upgamma_3) & = \sum_{k \geq 1} n_{k\Curve_i,\scrX} \, (\Nsf_i\upgamma_1 \cdot k\Curve_i)(\Nsf_i\upgamma_2 \cdot k\Curve_i) (\Nsf_i\upgamma_3 \cdot k\Curve_i) \dfrac{\rsf_i^{-k}}{1-\rsf_i^{-k}} \\
& = (\upgamma_1 \cdot \Curve_i^+) (\upgamma_2 \cdot \Curve_i^+) (\upgamma_3 \cdot \Curve_i^+) \sum_{k \geq 1} k^3 n_{k\Curve_i,\scrX} \dfrac{1}{1-r_i^k}. 
\end{align*}
where the second equality follows from $\Nsf_i \upgamma \cdot \Curve_i = \upgamma \cdot \Msf_i^{-1} \Curve_i = - \upgamma \cdot \Curve_i^+$ and the equality
\[ 
\dfrac{r_i^{-k}}{1-r_i^{-k}} = \dfrac{1}{r_i^k - 1}. 
\]
Using $n_{k\Curve_i,\scrX} = n_{k\Curve_i^+,\scrX_i^+}$ by Theorem~\ref{thm: GV under flop}, the difference $\mathsf{G}^{+}_{\rsf}(\upgamma_1,\upgamma_2,\upgamma_3) - \mathsf{G}_{\rsf}(\Nsf_i\upgamma_1,\Nsf_i\upgamma_2,\Nsf_i\upgamma_3)$ is equal to 
\begin{align*} 
&\phantom{=}(\upgamma_1 \cdot \Curve_i^+)(\upgamma_2 \cdot \Curve_i^+)(\upgamma_3\cdot \Curve_i^+) \sum_{k \geq 1} k^3 n_{k\Curve_i,\scrX} \left( \dfrac{r_i^k}{1-r_i^k} - \dfrac{1}{1-r_i^k} \right) \\
& = -(\upgamma_1 \cdot \Curve_i^+)(\upgamma_2 \cdot \Curve_i^+)(\upgamma_3\cdot \Curve_i^+) \sum_{k \geq 1} k^3 n_{k\Curve_i,\scrX}. 
\end{align*}
We next examine the $\mathsf{H}$ terms. Note that for $\upbeta \in \Achow_1(\scrX)$ we have $\upbeta \in \Z \Curve_i$ if and only if $\Msf_i^{-1}\upbeta \in \Z \Curve_i^+$. Consequently
\begin{align*} 
\mathsf{H}_{\rsf}(\Nsf_i \upgamma_1,\Nsf_i \upgamma_2,\Nsf_i\upgamma_3) & = \sum_{\substack{\upbeta \in \Achow_1(\scrX) \\ \upbeta \neq k\Curve_i}} n_{\upbeta,\scrX} \, (\Nsf_i\upgamma_1\cdot\upbeta)(\Nsf_i\upgamma_2\cdot\upbeta)(\Nsf_i\upgamma_3\cdot\upbeta) \dfrac{\rsf^{\Msf_i^{-1} \upbeta}}{1-\rsf^{\Msf_i^{-1} \upbeta}} \\
& = \sum_{\substack{\upbeta \in \Achow_1(\scrX_i^+) \\ \upbeta \neq k\Curve_i^+}} n_{\Msf_i \upbeta,\scrX} \, (\Nsf_i \upgamma_1 \cdot \Msf_i \upbeta)(\Nsf_i \upgamma_2 \cdot \Msf_i \upbeta)(\Nsf_i \upgamma_3 \cdot \Msf_i \upbeta) \dfrac{r^{\upbeta}}{1-r^{\upbeta}} \\
& = \sum_{\substack{\upbeta \in \Achow_1(\scrX_i^+) \\ \upbeta \neq k\Curve_i^+}} n_{\upbeta,\scrX_i^+} \, (\upgamma_1 \cdot \upbeta)(\upgamma_2 \cdot \upbeta)(\upgamma_3 \cdot \upbeta) \dfrac{r^{\upbeta}}{1-r^{\upbeta}} \\
& = \mathsf{H}^{+}_{\rsf}(\upgamma_1,\upgamma_2,\upgamma_3).
\end{align*}
where the penultimate equality holds since $\Nsf_i \upgamma \cdot \Msf_i \upbeta = \upgamma \cdot \Msf_i^{-1} \Msf_i \upbeta = \upgamma \cdot \upbeta$ and $n_{\Msf_i \upbeta,\scrX} = n_{\upbeta,\scrX_i^+}$ again by Theorem~\ref{thm: GV under flop}. Combining the comparison of the $\mathsf{G}$ terms with the comparison of the $\mathsf{H}$ terms gives \eqref{eqn: CTC}, as required.
\end{proof}

\subsection{The contraction algebra under flop}
The flopping contraction $\scrX\to\Spec \scrR$ has an associated contraction algebra $\CA$, defined using noncommutative deformation theory \cite{DW1, DW3}. After flopping a single curve $\Curve_i$ to obtain $\scrX_i^+\to\Spec \scrR$, noncommutative deformation theory associates to this another contraction algebra, written  $\upnu_i\CA$.  The algebra $\upnu_i\CA$ can be intrinsically obtained from $\CA$ via a certain mutation procedure, and in fact $\CA$ and $\upnu_i\CA$ are derived equivalent \cite{AugustTiltingTheory}.   Both $\CA$ and $\upnu_i\CA$ are finite dimensional algebras \cite[2.13]{DW1}.

Fix the GV invariants $n_\upbeta$ associated to $\scrX\to\Spec \scrR$, then Toda's dimension formula (see \ref{thm: Toda dim formula}) asserts that
\[
\dim_{\mathbb{C}}\CA=\sum_{\upbeta\in \Achow_1(\scrX)}n_\upbeta \big(\,\upbeta\cdot \mathds{1}\big)^2
\]
In many, but not all, cases the dimension of $\CA$ is in fact enough to recover the $n_\upbeta$.  The next result asserts that the numbers $n_\upbeta$ associated to $\CA$, together with the matrix $\mathsf{M}^{-1}_i$, completely determine the dimension of $\upnu_i\CA$.
\begin{cor}\label{cor: Toda formula iterate}
Under mutation at vertex $i$, equivalently flop at $\Curve_i$,
\[
\dim_{\mathbb{C}}\upnu_i\CA=\sum_{\upbeta\in \Achow_1(\scrX)}n_\upbeta \big(\,(\mathsf{M}^{-1}_i\upbeta)\cdot \mathds{1}\big)^2
\]
where $\mathsf{M}_i$ is the explicit matrix in \eqref{defn Mi}.
\end{cor}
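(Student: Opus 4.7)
My plan is to apply Toda's dimension formula (Theorem~\ref{thm: Toda dim formula}) to the contraction algebra $\upnu_i\CA$ of the flopped resolution $\scrX_i^+\to\Spec\scrR$, insert the GV-transformation law of Theorem~\ref{thm: GV under flop} to convert invariants of $\scrX_i^+$ into invariants of $\scrX$, and then change variables via $\upbeta = \mathsf{M}_i\upbeta'$. Granted these inputs, the remaining argument is essentially a bookkeeping exercise on the sum.

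In detail, Toda gives
\[
\dim_{\C} \upnu_i\CA \;=\; \sum_{\upbeta'\in\Achow_1(\scrX_i^+)} n_{\upbeta',\scrX_i^+}\,(\upbeta'\cdot\mathds{1})^2,
\]
and substituting $n_{\upbeta',\scrX_i^+} = n_{|\mathsf{M}_i\upbeta'|,\scrX}$ from Theorem~\ref{thm: GV under flop} replaces GV invariants of $\scrX_i^+$ by those of $\scrX$. Since $\mathsf{M}_i\colon\Achow_1(\scrX_i^+)\to\Achow_1(\scrX)$ is a $\Z$-linear isomorphism, setting $\upbeta = \mathsf{M}_i\upbeta'$ rewrites $\upbeta'\cdot\mathds{1} = (\mathsf{M}_i^{-1}\upbeta)\cdot\mathds{1}$. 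The only delicate point is the absolute value. For $\upbeta' = k\Curve_i^+$ with $k>0$, Corollary~\ref{cor: action of Mi pos neg}(1) gives $\mathsf{M}_i^{-1}(k\Curve_i) = -k\Curve_i^+$, but the squared pairing is insensitive to this sign flip: $((-k\Curve_i^+)\cdot\mathds{1})^2 = (k\Curve_i^+\cdot\mathds{1})^2$. For $\upbeta'\notin\mathbb{Z}\Curve_i^+$ an effective restricted root, Corollary~\ref{cor: action of Mi pos neg}(2) guarantees that $\mathsf{M}_i\upbeta'$ is already effective and a restricted root in $\mathfrak{h}_\scrI$, so $|\mathsf{M}_i\upbeta'| = \mathsf{M}_i\upbeta'$ and the substitution causes no issue. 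Using $n_\upbeta=0$ outside restricted roots, one may extend the rewritten sum over all $\upbeta\in\Achow_1(\scrX)$, yielding the claimed identity.

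The genuine obstacle, flagged by the authors in the introduction, is not the above manipulation but the preliminary identification of Toda's GV invariants -- defined intrinsically from $\CA$ via Donaldson--Thomas or perverse-sheaf methods -- with the perturbation-based invariants $n_{\upbeta,\scrX}$ used throughout Section~\ref{sec: enumerative invariants} and in the transformation law Theorem~\ref{thm: GV under flop}. Without this identification, Toda's formula and Theorem~\ref{thm: GV under flop} cannot be composed in a single chain. I would defer this verification to Appendix~\ref{sec: n beta eq n beta}, as the authors do; once the two notions are known to agree, the proof reduces to the short calculation above.
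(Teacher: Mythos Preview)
Your proposal is correct and follows essentially the same approach as the paper: apply Toda's formula (Theorem~\ref{thm: Toda dim formula}) to $\scrX_i^+$, substitute the GV transformation law (Theorem~\ref{thm: GV under flop}), reindex via $\mathsf{M}_i$, and observe that the sign ambiguity from Corollary~\ref{cor: action of Mi pos neg} disappears upon squaring. Your explicit case split on $\upbeta'\in\Z\Curve_i^+$ versus $\upbeta'\notin\Z\Curve_i^+$ is slightly more verbose than the paper's three-line display, but the logic is identical, and your remark about the appendix resolving the compatibility of GV definitions is exactly the point the authors make.
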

\begin{proof}
Combining previous results, it follows that
\begin{align*}
\dim_{\mathbb{C}}\upnu_i\CA
&=\sum_{\upgamma\in \Achow_1(\scrX_i^+)}n_{\upgamma}(\upgamma\cdot \mathds{1})^2\tag{by \ref{thm: Toda dim formula}}\\
&=\sum_{\upbeta\in \Achow_1(\scrX)}n_{\upbeta}\big(\,|\mathsf{M}^{-1}_i\upbeta|\cdot \mathds{1}\big)^2\tag{$\upgamma=|\mathsf{M}^{-1}_i\upbeta|$ in Theorem~\ref{thm: GV under flop}}\\
&=\sum_{\upbeta\in \Achow_1(\scrX)}n_{\upbeta}\big(\,(\mathsf{M}_i^{-1}\upbeta)\cdot \mathds{1}\big)^2\tag{by Corollary~\ref{cor: action of Mi pos neg}}
\end{align*}
where the point is that, by Corollary~\ref{cor: action of Mi pos neg}, the sign issue doesn't matter once we square. 
\end{proof}
In particular, it is possible to compute the dimension of $\upnu_i\CA$ without first having to present it.
\begin{example}
As in \cite{SmithWemyss}, consider the $cA_2$ example $\scrR_k=\frac{\mathbb{C}[\![u,v,x,y]\!]}{uv-xy(x^k+y)}$ for $k\geq 1$, and the specific crepant resolution $\scrX\to\Spec\scrR_k$ described in \cite[3.1]{SmithWemyss}, obtained first by blowing up $(u,y)$ then $(u,x)$.  In this case, as explained in \cite[Subsection~6.2]{SmithWemyss} $\CA$ can be presented as
\[
\begin{array}{c}
\begin{tikzpicture}[scale=0.8,>=stealth]
\node (A) at (0,0) [Bquiv] {};
\node (B) at (2,0) [Bquiv] {};
\draw[->, bend left] (A) to node[above]{$\scriptstyle a$} (B);
\draw[->, bend left] (B) to node[below]{$\scriptstyle b$} (A);
\end{tikzpicture}
\end{array}
\quad
\begin{array}{c}
(ab)^ka=0=b(ab)^k.
\end{array}
\]
We can immediately read off the GV invariants, namely $n_{1,0}=1$, $n_{0,1}=1$, and $n_{1,1}=k$.  Thus $\dim_\mathbb{C}\CA=n_{1,0}\cdot (1+0)^2+n_{0,1}\cdot (0+1)^2+n_{1,1}\cdot (1+1)^2$, which equals $1+1+4k=4k+2$.

We now flop the right hand curve.  In this Type $A$ example $\mathsf{M}_i^{-1}$ sends $(1,0)\mapsto (1,1)$, $(1,1)\mapsto (1,0)$ and $(0,1)\mapsto (0,-1)$.  Thus, by Corollary~\ref{cor: Toda formula iterate},
\[
\dim_\mathbb{C}\upnu_i\CA=n_{1,0}\cdot (1+1)^2+n_{0,1}\cdot (0+1)^2+n_{1,1}\cdot (0-1)^2,
\] 
which equals $4+1+k=k+5$.  In particular, $\upnu_i\CA\ncong\CA$ provided that $k\geq 2$.
\end{example}

\appendix
\section{Toda's dimension formula}\label{sec: n beta eq n beta}

\noindent This appendix contains a proof of the general form of Toda's dimension formula, which relates the dimension of the contraction algebra to a weighted sum of GV invariants.  This formula first appeared in \cite{TodaGV} for single-curve flops, then in \cite{TodaUtah} in general.  Alas, the GV invariants in \cite{TodaUtah} are defined with respect to moduli spaces of the contraction algebra, and not via perturbation as done here (Subsection~\ref{sec: GV invariants}), and furthermore \cite[Subsection~4.4]{TodaUtah} contains no proof. As such, we briefly sketch the argument to convince the reader (and ourselves!) that the formula contains nothing specific to single-curve flops.

\medskip
As notation, let $f\colon \scrX\to\Spec\scrR$ be a smooth $3$-fold flopping contraction, with contraction algebra $\CA$.   Write $\scrV$ for the Van den Bergh tilting bundle on $\scrX$ \cite{VdB1d} which generates zero perverse sheaves, and set $\mathrm{A}=\End_\scrX(\scrV)$.

\begin{lemma}\label{lem: CA 3CY dual}
$\RHom_{\mathrm{A}}(\CA,\mathrm{A})\cong \mathrm{M}[-3]$ for some $\CA$-bimodule $\mathrm{M}$ for which $\dim_{\mathbb{C}}\mathrm{M}=\dim_{\mathbb{C}}\CA$.
\end{lemma}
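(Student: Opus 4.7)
The plan is to exploit the relative $3$-Calabi--Yau property of $\mathrm{A}$. Since $f\colon\scrX\to\Spec\scrR$ is crepant and $\scrR$ is a complete local Gorenstein $3$-fold, $\scrX$ has trivial relative canonical sheaf, and Van den Bergh's derived equivalence $\Db(\scrX)\cong\Db(\mathrm{A})$ transports this to a relative $3$-Calabi--Yau structure on $\mathrm{A}$. Concretely, this yields a functorial Serre-type duality
\[
\Ext^i_{\mathrm{A}}(\mathrm{N}_1,\mathrm{N}_2)\cong D\,\Ext^{3-i}_{\mathrm{A}}(\mathrm{N}_2,\mathrm{N}_1)
\]
whenever $\mathrm{N}_1$ is a finite-dimensional $\mathrm{A}$-module and $\mathrm{N}_2\in\Db(\mathrm{A})$, where $D=\Hom_{\mathbb{C}}(-,\mathbb{C})$ denotes linear duality.

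I would then apply this duality with $\mathrm{N}_1=\CA$ (which is finite-dimensional by \cite[2.13]{DW1}) and $\mathrm{N}_2=\mathrm{A}$. Since $\mathrm{A}$ is projective, $\Ext^{j}_{\mathrm{A}}(\mathrm{A},\CA)=H^j(\CA)$ vanishes unless $j=0$, in which case it equals $\CA$. Feeding this into the duality gives
\[
\Ext^i_{\mathrm{A}}(\CA,\mathrm{A})\cong\begin{cases} D\CA & i=3,\\ 0 & \text{otherwise,}\end{cases}
\]
so $\RHom_{\mathrm{A}}(\CA,\mathrm{A})\cong (D\CA)[-3]$, and $\dim_{\mathbb{C}}D\CA=\dim_{\mathbb{C}}\CA$ is immediate. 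The choice $\mathrm{M}=D\CA$ then satisfies the dimension requirement of the lemma. For the bimodule statement, observe that $\RHom_{\mathrm{A}}(\CA,\mathrm{A})$ carries a natural $\mathrm{A}$-bimodule structure induced by the $\mathrm{A}$-bimodule structure on $\CA$, and by naturality of Serre duality in its contravariant argument this agrees with the standard linear-dual $\mathrm{A}$-bimodule structure on $D\CA$. Since $\CA$ is annihilated on both sides by $\ker(\mathrm{A}\twoheadrightarrow\CA)$, so is $D\CA$, and hence the structure descends to a $\CA$-bimodule structure as required.

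The main potential obstacle is formulating the CY3 property in sufficiently precise bimodule form to support the duality displayed above: in particular, one needs to verify that the Serre functor on the relevant subcategory of $\Db(\mathrm{A})$ acts as pure shift by $[3]$, with no Nakayama-type twist by an invertible bimodule. In the complete local setting this is standard for NCCRs of Gorenstein $3$-fold singularities, and is implicit in the framework of \cite{DW1,DW3}; granted this foundation, the remainder of the argument is essentially formal naturality applied to Serre duality.
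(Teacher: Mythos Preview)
Your proposal is correct and follows essentially the same route as the paper: both invoke the relative $3$-Calabi--Yau duality $\Ext^i_{\mathrm{A}}(\CA,\mathrm{A})\cong \Ext^{3-i}_{\mathrm{A}}(\mathrm{A},\CA)^\star$ to see that the cohomology of $\RHom_{\mathrm{A}}(\CA,\mathrm{A})$ is concentrated in degree~$3$ with dimension $\dim_{\mathbb{C}}\CA$. The only minor difference is that the paper obtains the $\CA$-bimodule $\mathrm{M}$ abstractly by truncating $\RHom_{\mathrm{A}}(\CA,\mathrm{A})$ in the derived category of bimodules, whereas you go further and identify $\mathrm{M}\cong D\CA$ via naturality of Serre duality; the paper in fact notes (in a subsequent remark, citing \cite{AugustTiltingTheory}) that one can even take $\mathrm{M}\cong\CA$, but does not use this.
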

\begin{proof}
First, by CY duality 
\[
\Ext^i_{\mathrm{A}}(\CA,\mathrm{A})\cong \Ext^{3-i}_{\mathrm{A}}(\mathrm{A},\CA)^\star,
\]
which is zero unless $i=3$, when it equals $\Hom_{\mathrm{A}}(\mathrm{A},\CA)^\star\cong (\CA)^\star$.  In particular, it follows that the cohomology of $\RHom_{\mathrm{A}}(\CA,\mathrm{A})$ is concentrated in a single degree (namely, three), where as a vector space it has dimension $\dim_{\mathbb{C}}\CA$.  Truncating in the category of bimodules then yields the result.
\end{proof}

\begin{remark}
$M\cong\CA$ as $\CA$-bimodules \cite[2.6]{AugustTiltingTheory}, but we will not need this fact.
\end{remark}

In what follows, set $\mathrm{B}\colonequals \mathrm{A}\otimes\mathrm{A}^{\mathrm{op
}}$, so that $\mathrm{B}$-modules are the same as $\mathrm{A}$-bimodules.  The following is then \cite[2.3]{TodaGV} adapted to our setting.

\begin{cor}\label{cor:sheaf filtration}
$\RHom_{\mathrm{A}}(\CA,\mathrm{A})\otimes^{\bf L}_{\mathrm{B}}(\scrV^\star\boxtimes\scrV)\cong \scrG[-2]$ for some $\scrG\in\coh \scrX\times\scrX$ which admits a filtration
\[
0=\scrG_0\subset\hdots\subset\scrG_{\dim_{\mathbb{C}}\CA}=\scrG
\]
such that each $\scrG_t/\scrG_{t-1}$ is isomorphic to $\scrO_{\Curve_i}(-1)\boxtimes\scrO_{\Curve_j}(-1)$ for some $i,j$ with $1\leq i,j\leq n$.
\end{cor}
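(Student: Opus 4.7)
The plan is to combine Lemma \ref{lem: CA 3CY dual} with the Van den Bergh tilting equivalence, transporting a composition series of $\mathrm{M}$ across to $\scrX\times\scrX$. By that lemma, the left-hand side equals
\[
\mathrm{M}[-3]\otimes^{\bf L}_{\mathrm{B}}(\scrV^\star\boxtimes\scrV),
\]
with $\dim_{\mathbb{C}}\mathrm{M}=\dim_{\mathbb{C}}\CA$. Since $\mathrm{B}=\mathrm{A}\otimes\mathrm{A}^{\op}$ and the second factor is an external product, this functor splits as two iterated derived tilts: one with $\scrV$ on the left-module side and one with $\scrV^\star$ on the right-module side. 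Under either direction of tilting, the simples of $\mathrm{A}$ attached to the exceptional curves are sent, up to shift, to $\scrO_{\Curve_i}(-1)$.

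The next step is to produce the filtration of $\mathrm{M}$ itself as a module over $\mathrm{B}'=\CA\otimes\CA^{\op}$. Because $\CA$ is a finite-dimensional basic algebra, its simples $S_i$ are one-dimensional and indexed by $i\in\scrIc$, and hence the simples of $\mathrm{B}'$ are the one-dimensional modules $S_i\boxtimes S_j^\star$ for $i,j\in\scrIc$. Any composition series of $\mathrm{M}$ therefore has length exactly $\dim_{\mathbb{C}}\mathrm{M}=\dim_{\mathbb{C}}\CA$, with every composition factor of this form. Applying the exact functor $-\otimes^{\bf L}_{\mathrm{B}}(\scrV^\star\boxtimes\scrV)$ transports this series to a filtration of the complex in \eqref{eqn: defn of G placeholder}; each subquotient is $\scrO_{\Curve_i}(-1)\boxtimes\scrO_{\Curve_j}(-1)$ up to a single uniform cohomological shift inherited from the two tiltings.

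The remaining task, and the one I expect to be the main obstacle, is pinning down the uniform cohomological shift. Under VdB tilting each simple $\mathrm{A}$-module corresponding to an exceptional curve is sent to $\scrO_{\Curve_i}(-1)[k]$ for a specific $k$ determined by the convention for the zero perverse heart; dually the same holds for $\scrV^\star$. A direct bookkeeping calculation, using that $\scrX$ is Calabi--Yau threefold so that the two tilts contribute net shift $[+1]$, combines with the $[-3]$ coming from Lemma \ref{lem: CA 3CY dual} to collapse the answer into a single coherent sheaf in cohomological degree $+2$. Defining $\scrG\in\coh(\scrX\times\scrX)$ as that sheaf, the subsheaves $\scrG_t$ are simply the images of the chosen composition filtration of $\mathrm{M}$, yielding the filtration of length $\dim_{\mathbb{C}}\CA$ with the asserted subquotients.
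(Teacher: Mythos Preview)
Your approach is essentially the paper's: filter $\mathrm{M}$ by simple $\mathrm{B}$-modules $\scrT_i\otimes_{\mathbb{C}}\scrS_j$ (necessarily with $i,j\neq 0$ since $\mathrm{M}$ is an $\CA$-bimodule), then transport across the tilting equivalence and induct. The paper dispatches what you flag as the ``main obstacle'' in a single computation,
\[
\RHom_{\scrX\times\scrX}\!\big(\scrV^\star\boxtimes\scrV,\ \scrO_{\Curve_i}(-1)\boxtimes\scrO_{\Curve_j}(-1)[-2]\big)\ \cong\ \scrT_i\otimes_{\mathbb{C}}\scrS_j[-3],
\]
via K\"unneth together with \cite[3.5.6, 3.5.7]{VdB1d}, which pins down your asserted net shift of $[+1]$ without tracking the two tilts separately. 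One minor imprecision: the functor $-\otimes^{\bf L}_{\mathrm{B}}(\scrV^\star\boxtimes\scrV)$ is triangulated rather than exact on modules, so the filtration of $\scrG$ is obtained by inducting along the composition series of $\mathrm{M}[-3]$, using that each subquotient lands in a single cohomological degree.
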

\begin{proof}
As notation, let $\scrT_0,\scrT_1,\hdots,\scrT_n$ be the simple left $\mathrm{A}$-modules, and $\scrS_0,\scrS_1,\hdots,\scrS_n$ be the simple right $\mathrm{A}$-modules.  All have dimension one, as a vector space, and by convention $\scrT_0$ (respectively $\scrS_0$) is the only simple which is not an $\CA$-module. 

Now $\mathrm{M}$ from Lemma~\ref{lem: CA 3CY dual} is a finite dimensional $\mathrm{B}$-module, so it is filtered by finite dimensional simples. But these all have the form $\scrT_i\otimes_{\mathbb{C}}\scrS_j$ for some $0\leq i,j\leq n$ (see e.g.\ \cite[3.10.2]{EtingofBook}).  Since $\mathrm{M}$ is an $\CA$-bimodule, $i=0$ or $j=0$ is not possible.  Hence $\mathrm{M}$ admits a filtration with quotients all of the form $\scrT_i\otimes_{\mathbb{C}}\scrS_j$ where $i,j\neq 0$.  The length of the filtration must be $\dim_{\mathbb{C}}\CA$, since each $\scrT_i\otimes_{\mathbb{C}}\scrS_j$ is one-dimensional.

Now, as observed by Toda \cite{TodaGV}
\begin{align*}
&\phantom{\cong}\RHom_{\scrX\times\scrX}\big(\scrV^\star\boxtimes\scrV, \scrO_{\Curve_i}(-1)\boxtimes\scrO_{\Curve_j}(-1)[-2]\big)\\
&\cong \RHom_{\scrX}(\scrV^\star, \scrO_{\Curve_i}(-1)[1])\otimes_{\mathbb{C}} \RHom_{\scrX}(\scrV, \scrO_{\Curve_j}(-1))[-3]\\
&\cong \scrT_i\otimes_{\mathbb{C}} \scrS_j[-3] \tag{by \cite[3.5.6, 3.5.7]{VdB1d}}
\end{align*}
Since $\RHom_{\mathrm{A}}(\CA,\mathrm{A})\cong \mathrm{M}[-3]$, applying the inverse functor $-\otimes^{\bf L}_{\mathrm{B}}\scrV^\star\boxtimes\scrV$ and inducting along the filtration of $\mathrm{M}[-3]$ gives the result. 
\end{proof}
Now as explained in Subsection~\ref{sec: perturbing target}, there exists a flat deformation
\[
\begin{tikzpicture}[>=stealth,scale=1.25]
\node (X) at (0,0) {$\mathfrak{X}$};
\node (Y) at (0,-1) {$\Spec \mathfrak{R}$};
\node (T) at (1,-1.75) {$T$};
\draw[->] (X)--node[left]{$\scriptstyle g$}(Y);
\draw[->] (Y)--(T);
\draw[->,densely dotted] (X)--(T);
\end{tikzpicture}
\]
for some Zariski open neighbourhood $T$ of $0\in\mathbb{A}^1$, such that
\begin{itemize}
\item the central fibre $g_0\colon \Xfrak_0\to \Spec \mathfrak{R}_0$ is isomorphic to $f \colon \scrX \to \Spec \scrR$.
\item all other fibres $g_t\colon \Xfrak_t\to \Spec \mathfrak{R}_t$ are flopping contractions with exceptional locus a disjoint union of $(-1,-1)$-curves.
\end{itemize}
Regarding the flopping curves $\Curve_1,\hdots,\Curve_n$ of $f$ as curves in the central fibre of $\mathfrak{X}\to T$, and thus as a curve in $\mathfrak{X}$, then the GV invariant $n_\upbeta$ is defined in Subsection~\ref{sec: GV invariants} to be the number of $g_t$-exceptional $(-1,-1)$-curves $\Curve$ such that for every line bundle $\scrL$ on $\mathfrak{X}$, 
\[
\deg (\scrL|_{\Curve})=\upbeta\cdot \big(\deg(\scrL|_{\Curve_1}),\hdots,\deg(\scrL|_{\Curve_n})\big)
\colonequals\sum_{i=1}^n\,\upbeta_i\deg(\scrL|_{\Curve_i}).
\] 
Here we are using the identification \eqref{eqn: lift line bundles} of line bundles on each fibre of $\Xfrak \to T$ with line bundles on the total space $\Xfrak$.

\begin{thm}[Toda]\label{thm: Toda dim formula}
If $\scrX\to\Spec\scrR$ is a smooth $3$-fold flopping contraction, then
\[
\dim_{\mathbb{C}}\CA=\sum_{\upbeta\in \Achow_1(\scrX)}n_\upbeta (\upbeta\cdot \mathds{1})^2
\]
where the $n_\upbeta$ are the GV invariants defined as in Subsection~\textnormal{\ref{sec: GV invariants}}.
\end{thm}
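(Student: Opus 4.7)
The plan is to use deformation invariance of an Euler characteristic across the family $\mathfrak{X} \to T$ of this appendix, comparing the central fibre (which detects $\dim_{\mathbb{C}} \CA$) with the generic fibre (which detects $\sum_\upbeta n_\upbeta (\upbeta \cdot \mathds{1})^2$). Concretely, choose $D \colonequals \sum_{k \in \scrIc} D_k \in \Pic \scrX$, where $\{D_k\}$ is the basis dual to $\{\Curve_k\}$, so that $D \cdot \Curve_k = 1$ for every $k$. Applied to the sheaf $\scrG$ from Corollary~\ref{cor:sheaf filtration}, each of the $\dim_{\mathbb{C}} \CA$ filtration quotients $\scrO_{\Curve_i}(-1) \boxtimes \scrO_{\Curve_j}(-1)$ tensored with $D \boxtimes D$ contributes $(D \cdot \Curve_i)(D \cdot \Curve_j) = 1$ to the total Euler characteristic, so
\[
\chi\big(\scrG \otimes (D \boxtimes D)\big) = \dim_{\mathbb{C}} \CA.
\]

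Next, extend every object in the derivation of $\scrG$ to the family. One promotes $\scrV$ to a bundle $\mathfrak{V}$ on $\mathfrak{X}$, sets $\mathfrak{A} = \End_{\mathfrak{X}}(\mathfrak{V})$ and $\mathfrak{C} = \mathfrak{A}/\mathfrak{A} e_0 \mathfrak{A}$, and runs the arguments of Lemma~\ref{lem: CA 3CY dual} and Corollary~\ref{cor:sheaf filtration} relative to $T$ to obtain a coherent sheaf $\mathfrak{G}$ on $\mathfrak{X} \times_T \mathfrak{X}$ with $\mathfrak{G}_0 \cong \scrG$. Since $\mathfrak{G}$ is $T$-flat with proper support over $T$, the line bundle $\mathfrak{D}$ lifting $D$ via \eqref{eqn: lift line bundles} makes $t \mapsto \chi\big(\mathfrak{G}_t \otimes (\mathfrak{D}_t \boxtimes \mathfrak{D}_t)\big)$ locally constant on $T$.

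To compute the generic Euler characteristic, fix $t \neq 0$. The exceptional locus of $g_t$ is a disjoint union of isolated $(-1,-1)$-curves $\Curve$, and by Subsection~\ref{sec: GV invariants} there are precisely $n_\upbeta$ such curves of each class $\upbeta = (\upbeta_i) \in \Achow_1(\mathfrak{X}) \cong \Achow_1(\scrX)$. Near each such $\Curve$ the contraction algebra is locally $\mathbb{C}$, so the local analogue of Corollary~\ref{cor:sheaf filtration} identifies the corresponding piece of $\mathfrak{G}_t$ with $\scrO_{\Curve}(-1) \boxtimes \scrO_{\Curve}(-1)$. Using $\mathfrak{D}_t \cdot \Curve = \sum_i \upbeta_i = \upbeta \cdot \mathds{1}$ and Künneth, this gives
\[
\chi\big(\mathfrak{G}_t \otimes (\mathfrak{D}_t \boxtimes \mathfrak{D}_t)\big) = \sum_{\Curve} (\mathfrak{D}_t \cdot \Curve)^2 = \sum_{\upbeta \in \Achow_1(\scrX)} n_\upbeta \,(\upbeta \cdot \mathds{1})^2,
\]
and deformation invariance then concludes.

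The main technical obstacle is the family step in the middle paragraph: extending $\scrV$ to a bundle $\mathfrak{V}$ on $\mathfrak{X}$ for which the resulting $\mathfrak{G}$ is $T$-flat, and verifying that on the generic fibre $\mathfrak{G}_t$ really does split into single-curve contributions $\scrO_{\Curve}(-1) \boxtimes \scrO_{\Curve}(-1)$ indexed by the $(-1,-1)$-curves of $\mathfrak{X}_t$. This is delicate because the full Van den Bergh tilting bundle of $\mathfrak{X}_t$ typically has more summands than $\mathfrak{V}_t$, so the identification must either be made after passing to a suitable class in compactly-supported K-theory, or by working étale-locally around each $(-1,-1)$-curve, where the local tilting bundles do agree and the contraction algebra is visibly $\mathbb{C}$. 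Once this is in place, the Euler characteristic comparison and the conclusion are formal.
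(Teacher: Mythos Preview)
Your overall strategy is exactly the paper's: choose the line bundle $\scrL$ with $\deg(\scrL|_{\Curve_k})=1$ for all $k$, compute $\upchi\big(\scrG\otimes(\scrL\boxtimes\scrL)\big)=\dim_\C\CA$ via the filtration of Corollary~\ref{cor:sheaf filtration}, and compare with the generic fibre where each $(-1,-1)$-curve of class $\upbeta$ contributes $(\upbeta\cdot\mathds{1})^2$. The difference is in how the interpolating family object is built. You propose to extend $\scrV$ to a bundle $\mathfrak{V}$ on $\mathfrak{X}$ and rerun Lemma~\ref{lem: CA 3CY dual} and Corollary~\ref{cor:sheaf filtration} relative to $T$; as you correctly flag, this is delicate because $\mathfrak{V}_t$ for $t\neq 0$ is not the Van den Bergh tilting bundle of $\mathfrak{X}_t$ (the latter has one summand per exceptional curve, and there are many more curves on the generic fibre), so neither the $3$-CY duality of Lemma~\ref{lem: CA 3CY dual} nor the identification of the simple factors is immediate. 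The paper sidesteps this entirely: it works with the Fourier--Mukai kernel $\scrP\in\Db(\mathfrak{X}\times_T\mathfrak{X})$ of the flop--flop autoequivalence on the $4$-fold $\mathfrak{X}$, and restricts $\scrP$ to fibres. On the central fibre this is the kernel of the noncommutative twist, giving $\scrF_0=\scrG$; on the generic fibre it is the kernel of a composition of spherical twists along disjoint $(-1,-1)$-curves, giving $\scrF_t=\bigoplus_{\upbeta}\bigoplus_{i=1}^{n_\upbeta}\scrO_{\upbeta,i}(-1)\boxtimes\scrO_{\upbeta,i}(-1)$. The flatness of the resulting family $\{\scrF_t\}$ is then imported from general results of Toda~\cite[3.1, 3.2]{TodaGV}. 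In short, your endpoint computations are identical to the paper's, but the FM-kernel construction buys exactly the family-level control that your direct extension of $\scrV$ struggles to provide.
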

\begin{proof}
The proof very closely follows the single-curve strategy in \cite{TodaGV}, and so we only outline the parts where some care is required.

The $4$-fold $g\colon\Xfrak\to\Spec\mathfrak{R}$ is a flopping contraction, and so by \cite[Section~6]{ChenFlop} $g$ admits a flop $\Xfrak^+\to\Spec\mathfrak{R}$ and a derived equivalence $\Db(\Xfrak)\to\Db(\Xfrak^+)$. Flopping back gives another equivalence $\Db(\Xfrak^+)\to\Db(\Xfrak)$, and thus the composition gives rise to an autoequivalence
\[
\Uppsi\colon\Db(\coh\Xfrak)\to\Db(\coh\Xfrak)
\]
with FM kernel $\scrP\in\Db(\Xfrak\times_T\Xfrak)$, say.   Define $\scrP_t=\mathbf{L}j_t^\star\scrP$, where $j_t$ is the inclusion $\Xfrak_t\times\Xfrak_t\to\Xfrak\times_T\Xfrak$.

On one hand, for $t=0$, as in \cite[(18)]{TodaGV} the restriction of $\Uppsi$ to the zero fibre results in the NC twist functor of \cite{DW1}, so by uniqueness of FM kernels
\[
\scrP_0\cong \mathrm{Cone}(\scrF_0[-2]\to\scrO_{\Delta_{\mathfrak{X}_0}})
\]
where $\scrF_0=\scrG$ in Corollary~\ref{cor:sheaf filtration}.

On the other hand, for $t\neq 0$, the birational map from $\Xfrak_t$ to $\Xfrak_t$ is the composition of the flops of all the curves in that fibre.  Since all the curves have normal bundle $(-1,-1)$,  the restriction of $\Uppsi$ to the neighbouring fibre $\Xfrak_t$ results in composition of the corresponding (classical) spherical twists.  Grouping the curves in $\Xfrak_t$ together via their curve class, namely $\{\Curve_{\upbeta,i}\mid  1\leq i\leq n_\upbeta\}_{\upbeta}$, then again as in \cite[(19)]{TodaGV}, uniqueness of FM kernels yields
\[
\scrP_t\cong \mathrm{Cone}(\scrF_t[-2]\to\scrO_{\Delta_{\mathfrak{X}_t}})
\] 
where $\scrF_t\in\coh\Xfrak_t\times\Xfrak_t$ is the sheaf now defined by 
\begin{equation}
\scrF_t=\bigoplus_{\upbeta}\bigoplus_{i=1}^{n_\upbeta}\scrO_{\upbeta,i}(-1)\boxtimes\scrO_{\upbeta,i}(-1)\label{eqn: def Ft}
\end{equation}
where $\scrO_{\upbeta,i}$ is the structure sheaf of $\Curve_{\upbeta,i}$. Using \cite[3.1, 3.2]{TodaGV}, which are general, it follows that $\scrF_t$ with $t\neq 0$ is a flat deformation of $\scrF_0$.  Since both $\scrF_0$ and $\scrF_t$ have compact supports, their Hilbert polynomials must therefore be equal. In particular, let $\scrL$ be the $g$-ample line bundle on $\Xfrak$ such that $\deg(\scrL|_{\Curve_i})=1$ for all $i=1,\hdots,n$, which exists by \eqref{eqn: lift line bundles} and the sentence underneath, then
\begin{equation}
\upchi(\scrF_0\otimes(\scrL\boxtimes\scrL))=\upchi(\scrF_t\otimes(\scrL\boxtimes\scrL)).\label{eqn: Hilb polys same}
\end{equation}
But $\upchi$ is additive on filtrations, and so by Corollary~\ref{cor:sheaf filtration}
\begin{equation}
\upchi(\scrF_0\otimes(\scrL\boxtimes\scrL))=\sum_{i=1}^{\dim_{\mathbb{C}}\CA}\upchi(\scrO_{\Curve_i})\upchi(\scrO_{\Curve_j})=\dim_{\mathbb{C}}\CA.\label{eqn: Hilb LHS}
\end{equation}
Conversely, given the form of $\scrF_t$ in \eqref{eqn: def Ft}, it is clear that
\begin{equation}
\upchi(\scrF_t\otimes(\scrL\boxtimes\scrL))
=\sum_{\upbeta}\sum_{i=1}^{n_\upbeta}\upchi(\scrO_{\upbeta,i}(-1)\otimes\scrL)^2
=\sum_{\upbeta}n_\upbeta(\upbeta_1+\hdots+\upbeta_n)^2\label{eqn: Hilb RHS}
\end{equation}
since $\deg(\scrL|_{\Curve_{\upbeta,i}})=\upbeta_1+\hdots+\upbeta_n$. Combining \eqref{eqn: Hilb polys same}, \eqref{eqn: Hilb LHS} and \eqref{eqn: Hilb RHS} gives the result.
\end{proof}\bigskip




\bibliographystyle{alpha}
\bibliography{Bibliography}

\end{document}